\PassOptionsToPackage{usenames, dvipsnames}{xcolor}
\documentclass[a4paper, 11pt]{preprint}
\pdfoutput=1

\usepackage[usenames, dvipsnames]{xcolor}

\usepackage{tikz}


\usepackage{enumitem}
\usepackage{amsfonts}
\usepackage{amsmath}
\usepackage{hyperref}

\usepackage{natbib} 
\bibliographystyle{plainnat}

\usepackage{caption}
\usepackage{subcaption}


\usepackage{url}



\newcommand{\xx}{\mathbf{x}}
\newcommand{\yy}{\mathbf{y}}

\newcommand{\GG}{\mathcal{G}}
\newcommand{\HH}{\mathcal{H}}

\newcommand{\MM}{\mathcal{M}}

\newcommand{\RR}{\mathcal{R}}
\newcommand{\ci}{\mathcal{I}}
\newcommand{\J}{\mathcal{J}}

\newcommand{\R}{\mathbb{R}}
\newcommand{\C}{\mathbb{C}}

\newcommand{\pa}{\RM{pa}}

\newcommand{\vX}{{\bf X}}
\newcommand{\vx}{{\bf x}}

\DeclareMathOperator{\bernoulli}{\textrm{Ber}}

\newcommand\independent{\protect\mathpalette{\protect\independenT}{\perp}}
\def\independenT#1#2{\mathrel{\rlap{$#1#2$}\mkern2mu{#1#2}}}

\title{Representation of Context-Specific Causal Models with Observational and Interventional Data}
\author{Eliana Duarte and Liam Solus}

\address[L.~Solus]{Department of Mathematics, KTH Royal Institute of Technology, Sweden}
\email{solus@kth.se}

\address[E.~Duarte]{Departmento de Matem\'atica, Faculdade de Ci\^encias, Universidade do Porto, Portugal}
\email{eliana.gelvez@fc.up.pt}
\date{\today}

\subjclass[2020]{62H22 (primary) 62R01, 62D20, 13C70, 13P25 (secondary)}
\keywords{%
  graphical model,
  Bayesian network,
  directed acyclic graph,
  context-specific conditional independence,
  labeled directed acyclic graph,
  staged tree,
  markov equivalence,
  intervention}

\begin{document}

\begin{abstract}
We address the problem of representing context-specific causal models based on both observational and experimental data
collected under general (e.g. hard or soft) interventions by introducing a new family of context-specific conditional independence models
called CStrees. This family is defined via a novel factorization criterion that allows for a generalization of the
factorization property defining general interventional DAG models. We derive a graphical characterization of model
equivalence for observational CStrees that extends the Verma and Pearl criterion for DAGs. This characterization is then extended to
CStree models under general, context-specific interventions. To obtain these results, we formalize a notion of context-specific
intervention that can be incorporated into concise graphical representations of CStree models. We relate CStrees to other context-specific models, showing that the families of DAGs, CStrees, labeled DAGs and staged trees form a strict chain of inclusions. We end with an application of interventional CStree models to a real data set, revealing the context-specific nature of the data dependence structure and the soft, interventional perturbations.
\end{abstract}

\maketitle

\section{Introduction}
\label{sec:introduction}

We study the problem of representing causal relations that hold amongst jointly distributed categorical variables when observational data and data from, possibly soft and context-specific, interventions is available. 
To do so, we specify a family of context-specific conditional independence models defined via a factorization criterion that directly allows for general, context-specific interventions extending the interventional DAG models studied by \citet{YKU18}.
Given a vector $\vX = (X_1,\ldots, X_p)$ of jointly distributed categorical variables, we say that $\vX$ is \emph{Markov} to a directed acyclic graph (DAG) $\mathcal{G} = ([p], E)$ with node set $[p] = \{1,\ldots, p\}$ and edges $E$ if 
\begin{equation}
\label{eqn:DAGfactorization}
P(\vx) = \prod_{i=1}^pP(x_i | \vx_{\pa_{\mathcal{G}}(i)}) \qquad \mbox{for all outcomes $\vx=(x_1,\ldots, x_p)$ of $\vX$},
\end{equation}
where $\pa_\GG(i) = \{j\in[p] : j\rightarrow i\in E\}$ denotes the \emph{parents} of $i$ in $\GG$.
The \emph{DAG model} for $\mathcal{G}$, denoted $\mathcal{M}(\mathcal{G})$ is the collection of all $\vX$ that are Markov to $\mathcal{G}$. 

From the perspective of causality, we interpret edges $i\rightarrow j$ in the DAG $\mathcal{G}$ as representing that $X_i$ has a direct causal effect on $X_j$ in the data-generating distribution $\vX$. 
However, it is well-known that the  causal structure $\mathcal{G}$ of the distribution $\vX\in \mathcal{M}(\mathcal{G})$ is generally not identifiable from a random sample alone. 
Specifically, it is possible that two distinct DAGs $\mathcal{G}$ and $\mathcal{H}$ satisfy $\mathcal{M}(\mathcal{G}) = \mathcal{M}(\mathcal{H})$; a phenomenon known as \emph{Markov equivalence}.  
Hence, with only a random sample, one cannot distinguish the data-generating causal structure from the other DAGs within its \emph{Markov equivalence class} (MEC); e.g., the set of all DAGs to which it is Markov equivalent. 

While the entire causal structure cannot be recovered from a random sample alone, characterizations of Markov equivalence show that some causal directions can be identified \citep{VP92}. 
To better learn the complete causal structure, the gold standard approach is to use additional data drawn from interventional distributions; e.g., distributions that arise from augmenting the role played by subsets of variables in the causal system. 
Namely, given a subset $I\subseteq[p]$, called an \emph{intervention target}, we define an \emph{interventional distribution} $\vX^{(I)}$ for $I$ and $\vX\in \mathcal{M}(\mathcal{G})$ as a distribution having probability mass function satisfying 
\begin{equation}
\label{eqn:I-DAGfactorization}
P^{I}(\vx) = \prod_{i\in I}P^{I}(x_i \mid \vx_{\pa_{\mathcal{G}}(i)})\prod_{i\notin I}P(x_i \mid \vx_{\pa_{\mathcal{G}}(i)}) \, \, \,\, \mbox{for all outcomes $\vx = (x_1,\ldots, x_p)$,}
\end{equation}
where $P(x_i \mid \vx_{\pa_{\mathcal{G}}(i)})$ is the conditional factor appearing in~\eqref{eqn:DAGfactorization}. 
Given a sequence of interventional targets $\mathcal{I} = (I_0 :=\emptyset, I_1,\ldots, I_K)$, the $\mathcal{I}$-DAG model for the pair $(\mathcal{G}, \mathcal{I})$ is the collection of sequences of distributions
\begin{equation}
\label{eqn:I-DAGmodel}
\begin{split}
\mathcal{M}(\mathcal{G},\mathcal{I}) = \{(\vX^{I_0},\ldots,\vX^{I_K}) :\, &\mbox{ for all } k \in\{0,\ldots, K\}, \vX^{I_k}\in\mathcal{M}(\mathcal{G}) \mbox{ and for all outcomes $\vx$,} \\
&\,\, P^{I_k}(x_i \mid \vx_{\pa_{\mathcal{G}}(i)}) = P^{I_{k'}}(x_i \mid \vx_{\pa_{\mathcal{G}}(i)}) \mbox{ whenever } i\notin I_k \cup I_{k'}\}.
\end{split}
\end{equation}
Here, we set $I_0 := \emptyset$, corresponding to the \emph{observational distribution} in~\eqref{eqn:DAGfactorization}. 
The invariances in the conditional factors of $P^{I_0}(\vx)$ and $P^{I_k}(\vx)$ for the variables not targeted in the intervention allow for the recovery of additional causal relations, depending on the target sets $\mathcal{I}$. 
Namely, two DAGs are called \emph{$\mathcal{I}$-Markov equivalent} whenever $\mathcal{M}(\mathcal{G},\mathcal{I}) = \mathcal{M}(\mathcal{H}, \mathcal{I})$. 
The $\mathcal{I}$-Markov equivalence classes refine Markov equivalence classes by using the additional invariance information to fix the direction of more edges in the DAGs. 

Characterizations of interventional Markov equivalence were first considered by \citet{HB12}, under the assumption of \emph{perfect interventions}, in which we assume the causal relations in the conditional factors are destroyed under the intervention; e.g., $P^{I_k}(x_i \mid \vx_{\pa_{\mathcal{G}}(i)}) = P^{I_k}(x_i)$. 
More recently, \citet{YKU18} gave a complete characterization of model equivalence for the $\mathcal{I}$-DAG models $\mathcal{M}(\mathcal{G},\mathcal{I})$ defined above.
Their result generalizes the characterization of Hauser and B\"uhlmann to \emph{general interventions}. 
These additionally allow for \emph{soft interventions} \citep{eaton2007exact}, sometimes called \emph{mechanism changes} \citep{tian2013causal}, under which the causal relations need not be destroyed.

As seen in the factorization definition~\eqref{eqn:DAGfactorization} of a DAG model $\mathcal{M}(\mathcal{G})$, a DAG model is defined by a collection of CI relations of the form $X_i \independent \vX_{[i]\setminus \pa_{\mathcal{G}}(i)} \mid \vX_{\pa_{\mathcal{G}}(i)}$, and the corresponding $\mathcal{I}$-DAG model $\mathcal{M}(\mathcal{G},\mathcal{I})$ is defined via a natural extension of this definition. 
In particular, a DAG model $\mathcal{M}(\mathcal{G})$ is a \emph{conditional independence model} \citep{SL14}, i.e., the collection of distributions satisfying a pre-specified set of CI relations, and an $\mathcal{I}$-DAG model is an extension of a conditional independence model that includes a set of conditional invariance relations.
These relations are compactly represented in the DAG structure, allowing one to easily read off causal information. 
On the other hand, the DAG and $\mathcal{I}$-DAG models only capture conditional independence and invariance relations, and thus could overlook important causal relations that only hold for specific outcomes of certain variables in the system.

Given disjoint subsets $A,B,C,S\subseteq[p]$ we say that $\vX_A$ is \emph{conditionally independent of} $\vX_B$ given $\vX_C$ \emph{in the context} $\vX_S = \vx_S$ if 
\begin{equation}
\label{eqn:CSI}
P(\vx_A |\vx_B, \vx_C,\vx_S) = P(\vx_A | \vx_C,\vx_S) 
\end{equation}
for all marginal outcomes $\vx_A,\vx_B$ and $\vx_C$. 
When~\eqref{eqn:CSI} is satisfied, we write $\vX_A \independent \vX_B \mid \vX_C, \vX_S = \vx_S$ and we call this a \emph{context-specific conditional independence relation}, or \emph{CSI relation} for short.
CSI relations arise naturally in a wide variety of modeling problems, as described in \citep{poole2003exploiting}. 
The following is an example where (soft) interventions may also be context-specific.

\begin{example}
    \label{ex:CSI causal model}
    (Adapted from \citep[Example~5]{poole2003exploiting})
    When a child arrives at the hopsital, the staff may want to determine if they are carrying chicken pox. 
    If the child has not recently been exposed, they are likely not a carrier.  
    Hence, carrier status may be independent of all other background factors, given that the child has no recent exposure.
    Given the child has been exposed and has no previous diagnosis then they are likely a carrier regardless of other background factors. 
    Similarly, given that the child has a previous diagnosis, recent exposure may be independent of other background factors, as the child's guardians may be more lax in avoiding further exposure. 
    We let $X_2, X_3$ and $X_4$ denote, respectively, previous diagnosis, recent exposure and carrier.
    Suppose that we have one additional background factor $X_1$, representing family income level (low or high). 
    Given that $X_1$ and $X_2$ may covary due to healthcare costs, we consider the resulting context-specific conditional independence model:
    
    \[
    \mathcal{D} = \{ 
    X_4\independent \vX_{1,2} \mid X_3 = \mbox{no},\, X_4 \independent X_1 \mid  \vX_{2,3} = (\mbox{no}, \mbox{yes}),\, X_3 \independent X_1 \mid X_2 = \mbox{yes}
    \}.
    \]
    Note that the CSI relations in $\mathcal{D}$ cannot be exactly represented by a DAG.
    An example of a context-specific, general (in this case, soft) intervention in this system could be the result of a certain local government program subsidizing healthcare costs for low income families, resulting in a mechanism change replacing $P^{I_0}(X_2 \mid X_1 = \mbox{low})$ with $P^{I_1}(X_2 \mid X_1 = \mbox{low})$. 
    Similarly, another municipality may install public school programs aimed at reducing close-contact with diagnosed children, inducing a mechanism change $P^{I_2}(X_3 \mid X_2 = \mbox{yes})$.
\end{example}

While CSI relations may be important to a given modeling problem, encoding such relations complicates the task of giving concise representations of the causal model. 
In this paper, we present a family of models for which one can produce concise graphical representations of these causal relations in the presence of context-specific information. 
Our models are based on a context-specific generalization of the factorization definition of a DAG model given in~\eqref{eqn:DAGfactorization}. 
Hence, these models admit a straightforward generalization of~\eqref{eqn:I-DAGfactorization} to models for general, context-specific interventions, as in Example~\eqref{eqn:CSI}.
We provide generalizations of the characterization of model equivalence for DAGs due to \citet{VP92} as well as the characterization of $\mathcal{I}$-Markov equivalence of \citet{YKU18} to the context-specific setting.
We also relate our models and results to previously studied families of context-specific models, including the staged tree models of \citet{SA08} and the labeled DAG models of \citet{PNKC15}.
Particularly, we show that our models are the subfamily of the LDAG models of \citet{PNKC15} that admit a factorization which easily generalizes~\eqref{eqn:I-DAGfactorization} when provided with data from soft, context-specific interventions.
Finally, we apply these models to a real data set, demonstrating how the model equivalence characterization may capture the context-specific nature of soft interventional perturbations while also revealing when the causal structure is fully identifiable.

\section{Related Work}
\label{sec:relatedwork}

A variety of different models for context-specific conditional independence have been proposed. 
These models range from including somewhat limited context-specific information, yielding close generalizations of DAG models, to models capable of encoding a multitude of context-specific relations.
Examples of the former models include \emph{Bayesian multinets} \citep{GH96} and \emph{similarity networks} \citep{H91}, for which the contexts $\vx_S$ considered in~\eqref{eqn:CSI} are limited to outcomes of a single variable called the \emph{hypothesis variable}. 
Since the CSI relations in these models are relatively controlled, they tend to directly inherit many of the niceties of DAG models at the cost of limited context-specific information. 

At the other end of the spectrum are the \emph{staged tree models} of \citet{SA08}.
Staged trees are perhaps the broadest model family for encoding context-specific information.
They amount to a colored probability trees in which the colors are used to encode equalities of conditional distributions.
Since the graphical representation of a staged tree model is based on a probability tree, the staged tree representation may be difficult to interpret.  
This is, in part, due to the fact that the number of edges and vertices in the graph grows on the order of $2^{p+1}$ for even $p$ binary variables. 
Hence, drawing or storing these graphs for even few variables is difficult, and reading the captured CSI relations from the coloring can be even more challenging. 
The \emph{chain event graphs} described in \citep{CGS18} offer an alternative representation of a staged tree that can reduce complexity, but these may still be complex and difficult to interpret. 

The characterization of model equivalence for staged trees has been studied. 
However, complete characterizations exist for only some special cases \citep{GS18, GBRS18}. 
Moreover, the complexity of staged tree models is naturally reflected in these characterizations, which tend to be much more technical than the simple characterizations of model equivalence known for DAGs. 
In a similar fashion, staged tree representations for context-specific interventions, both hard and soft, have been studied \citep{RS07, T08, TSR10, DS20}. 
However, the problem of characterizing interventional staged tree model equivalence is yet to be studied. 

The family of CStree models introduced in this paper is created by using staged tree representations, but limits model complexity by
restricting the types of CSI relations they encode. 
In doing so, we obtain a family of context-specific models that allow for the incorporation of more CSI relations than Bayesian multinets and similarity networks, while sufficiently reducing the complexity of staged trees so that one may obtain reasonable characterizations of model equivalence in both the observational and context-specific, interventional settings.

A more moderately complex family of context-specific models called \emph{labeled DAG models}, or LDAGs, were introduced by \citet{PNKC15}. 
An LDAG is a pair $(\mathcal{G}, \mathcal{L})$ where $\mathcal{G} = ([p], E)$ is a DAG and $\mathcal{L}$ is a set of \emph{labels}, one for each edge $j\rightarrow i$ of $\mathcal{G}$. 
The label $L_{j,i}$ for the edge $j\rightarrow i$ is a subset of the outcomes of $\vX_{\pa_{\mathcal{G}}(i)\setminus j}$. 
The \emph{LDAG model} for $(\mathcal{G}, \mathcal{L})$ is the collection of distributions
\begin{equation}
\begin{split}
    \label{eqn:LDAGmodel}
    \mathcal{M}(\mathcal{G},\mathcal{L}) = \{\vX :\, &\vX\in\mathcal{M}(\mathcal{G}) \mbox{ and }\\ 
    &X_i\independent X_j \mid \vX_{\pa_{\mathcal{G}}(i)\setminus j} = \vx_{\pa_{\mathcal{G}}(i)\setminus j} \mbox{ for all } \vx_{\pa_{\mathcal{G}}(i)\setminus j}\in L_{j,i}, \mbox{ for all $j\rightarrow i\in E$}\}.
\end{split}
\end{equation}
Hence, LDAGs are a context-specific generalization of DAG models in which the model-defining context-specific relations are pairwise CSI relations of the form $X_i\independent X_j \mid \vX_{\pa_{\mathcal{G}}(i)\setminus j} = \vx_{\pa_{\mathcal{G}}(i)\setminus j}$. 
One advantage of LDAGs is that they immediately admit a concise graphical representation. 
Specifically, the LDAG representation of $(\mathcal{G}, \mathcal{L})$ simply amounts to drawing the DAG $\mathcal{G}$ with the set $L_{j,i}$ as a label on the edge $j\rightarrow i$, where one omits this label whenever $L_{j,i} = \emptyset$. 
Certain CSI relations of the form~\eqref{eqn:CSI} are then easily read from this representation by deleting the edges $j\rightarrow i$ for which $\vx_S$ restricts to an element of $L_{j,i}$ and then considering d-separation as for standard DAG models. 

As noted by \citet{tikka2019identifying}, LDAGs also encode \emph{hard interventions}, e.g., interventions in which $P^{I_k}(x_i | \vx_{\pa_{\mathcal{G}}(i)}) = P^{I_k}(x_i)$ for all outcomes $x_i$ and $\vx_{\pa_{\mathcal{G}}(i)}$ by analogously deleting the edges pointing into node $i$.  
One could then apply this reasoning to a characterization of LDAG Markov equivalence in the observational case \citep[Theorem~4]{PNKC15} to obtain a direct extension of the model for hard interventions in DAGs given by  \citet{HB12}.
On the other hand, the characterization of \citet{YKU18} extends the Hauser and B\"uhlmann result to a characterization of model equivalence under general (e.g., hard and soft) interventions by way of describing the interventional model via the extension~\eqref{eqn:I-DAGfactorization} of the DAG factorization~\eqref{eqn:DAGfactorization}. 
However, LDAG models are defined via the \emph{pairwise} CSI relations $X_j\independent X_i \mid \vX_{\pa_{\mathcal{G}}(j)\setminus i} = \vx_{\pa_{\mathcal{G}}(j)\setminus i}$ as opposed to a direct, context-specific generalization of the CI relations $X_i \independent \vX_{[i]\setminus \pa_{\mathcal{G}}(i)} \mid \vX_{\pa_{\mathcal{G}}(i)}$ corresponding to the factorization~\eqref{eqn:DAGfactorization}. 
Hence, a characterization of interventional Markov equivalence via LDAGs is not immediately apparent for general, context-specific interventions. 
Specifically, it is not known that the context-specific analogue of the conjecture of Hauser and B\"uhlmann proven in \citep[Corollary 3.12]{YKU18} holds for LDAG representations. 
To do so, one would first have to provide a definition of general interventional LDAG models extending~\eqref{eqn:I-DAGfactorization} and then derive a characterization of model equivalence that generalizes the result of \citet{PNKC15}. 
This is done for an appropriate family of LDAG models in this paper.

The CStree models we introduce are a subfamily of LDAG models whose definition is based on a direct, context-specific generalization of the factorization definition~\eqref{eqn:DAGfactorization} of a DAG model.
For these models, a definition of general interventional LDAG models extending~\eqref{eqn:I-DAGfactorization} is straightforward, allowing one to address the conjecture of \citet{HB12} in a context-specific setting.
%
Using the special structure of these models, we obtain characterizations of Markov equivalence that naturally extend to generalizations of interventional Markov equivalence under general, context-specific interventions. 
The result is that CStree models  are more general than DAG models, possess the desirable representations of LDAG models, and additionally admit general interventional model equivalence characterizations extending those of \citet{YKU18}.

\section{CStrees}
\label{sec:cstrees}
The models studied in this paper are a subfamily of LDAG models called CStrees. 
Their realization as LDAGs may be used to provide concise graphical representations.
However, to allow for extensions that incorporate general, context-specific interventions, CStrees are defined according to a factorization criterion akin to~\eqref{eqn:DAGfactorization}, as opposed to the standard LDAG definition via pairwise CSI relations in~\eqref{eqn:LDAGmodel}. 

\subsection{CStree models}
\label{subsec:cstrees}
A CStree model is a collection of joint categorical distributions $\vX = (X_1,\ldots, X_p)$ assigned to an ordered pair $(\pi, \mathbf{s})$ where $\pi$ is a variable ordering, called the \emph{causal order} and $\mathbf{s}$ is a collection of sets. The set $\mathbf{s}$ indexes a collection of CSI relations
and is defined as follows.

Suppose that $X_i$ has state space $\mathcal{X}_{i} = [d_i]$ for positive integers $d_1,\ldots, d_p > 1$ and $\vX$ has state space $\mathcal{X} = \prod_{i=1}^p\mathcal{X}_i$. 
For $S\subseteq[p]$, we let $\vX_S = (X_i : i\in S)$ denote the marginal distribution for the variables with indices in $S$, and we denote its state space by $\mathcal{X}_S$. 
Given a causal order $\pi = \pi_1\ldots\pi_p$ of the indices in $[p]$, we consider CSI relations of the form
\begin{equation}
\label{eqn:CStree relation}
    X_{\pi_i} \independent \vX_{[\pi_1:\pi_{i-1}]\setminus S} \mid \vX_S = \vx_S
\end{equation}
for some outcome $\vx_S\in\mathcal{X}_S$ and $S\subseteq [\pi_1:\pi_{i-1}] := \{\pi_1,\ldots, \pi_{i-1}\}$. 
To such a relation, we associate the set $\mathcal{S}_{\pi, i}(\vx_S)$ of all marginal outcomes $\vx_{\pi_1:\pi_{i-1}}\in\mathcal{X}_{\pi_1:\pi_{i-1}}$ that agree with $\vx_S$ in the indices $S$. 
Note that we allow $S = [\pi_1:\pi_{i-1}]$, in which case $\mathcal{S}_{\pi,i}(\vx_{\pi_1:\pi_{i-1}}) = \{\vx_{\pi_1:\pi_{i-1}}\}$ is a singleton corresponding to the vacuously satisfied CSI relation $X_{\pi_i} \independent \emptyset \mid \vX_{\pi_1:\pi_{i-1}} = \vx_{\pi_1:\pi_{i-1}}$.

For a given causal order $\pi$, and each $i\in[p]$, we let $\mathcal{D}_{\pi, i}$ be a set of CSI relations as in~\eqref{eqn:CStree relation}. 
Letting $\mathcal{D}= \mathcal{D}_{\pi, 1}\cup \cdots \cup \mathcal{D}_{\pi, p}$, we obtain a collection of distributions
\begin{equation}
\label{eqn:CSI model}
\mathcal{M}(\mathcal{D}) = \{ \vX : \vX \mbox{ satisfies all CSI relations in } \mathcal{D}\}.
\end{equation}
For each $\mathcal{D}_{\pi,i}$, we define the sets
\[
\mathbf{s}_i = \{ \mathcal{S}_{\pi, i}(\vx_S) : X_{\pi_i} \independent \vX_{[\pi_1:\pi_{i-1}]\setminus S} \mid \vX_S = \vx_S\in \mathcal{D}_{\pi, i}\} \mbox{ and } \mathbf{s} = \mathbf{s}_1\cup \cdots \cup \mathbf{s}_p.
\]
The set $\mathcal{M}(\mathcal{D})$ is a context-specific conditional independence model defined by the pair $(\pi, \mathbf{s})$.

\begin{definition}
    \label{def:CStree}
    The pair $(\pi, \mathbf{s})$ is a \emph{CStree} if for all $i\in[p]$, the set $\mathbf{s}_i$ is a partition of $\mathcal{X}_{\pi_i:\pi_{i-1}}$. Given a CStree $\mathcal{T} = (\pi, \mathbf{s})$, the CStree model $\mathcal{M}(\mathcal{T})$ is the model $\mathcal{M}(\mathcal{D})$ for the pair $(\pi, \mathbf{s})$. 
\end{definition}

We say that $\vX$ is \emph{Markov} to the CStree $\mathcal{T} = (\pi,\mathbf{s})$ if $\vX\in\mathcal{M}(\mathcal{T})$. 
Just as for the definition of ``Markov'' for DAG models, a distribution $\vX$ is Markov to a CStree $\mathcal{T}$ if and only if it satisfies a factorization analogous to~\eqref{eqn:DAGfactorization}. 
Specifically, since $\mathbf{s}_i$ partitions $\mathcal{X}_{\pi_1:\pi_{i-1}}$ for every $i\in[p]$, then each outcome $\vx_{\pi_1:\pi_{i-1}}$ may be mapped to the set $S$ for which $\vx_{\pi_1:\pi_{i-1}}\in \mathcal{S}_{\pi, i}(\vx_S)$:
\[
\pa_\mathcal{T}: \bigcup_{i\in[p]}\mathcal{X}_{\pi_1:\pi_{i-1}} \longrightarrow \{S: S\subseteq[p]\}; \qquad \pa_\mathcal{T}(\vx_{\pi_1:\pi_{i-1}}) = S, \mbox{ where } \vx_{\pi_1:\pi_{i-1}}\in\mathcal{S}_{\pi,i}(\vx_S).
\]
It follows that $\vX$ is Markov to $\mathcal{T}$ if and only if
\begin{equation}
    \label{eqn:CStreefactorization}
    P(\vx) = \prod_{i=1}^pP(x_i \mid \vx_{\pa_{\mathcal{T}}(\vx_{\pi_1:\pi_{i-1}})}) \qquad \mbox{for all outcomes $\vx = (x_1,\ldots, x_p)$.}
\end{equation}

\begin{remark}
    \label{rem: DAGs are CStrees}
If $\mathbf{s}_i = \{\mathcal{S}_{\pi, i}(\vx_{P_i}) : \vx_{P_i}\in\mathcal{X}_{P_i}\}$ for some $P_i$ for each $i\in[p]$, then the factorization~\eqref{eqn:CStreefactorization} reduces to the factorization~\eqref{eqn:DAGfactorization} for a DAG $\mathcal{G}$ where $\pa_{\mathcal{G}}(i) = P_i$, for all $i$. 
Hence, CStrees are a generalization of DAG models via a context-specific generalization of~\eqref{eqn:DAGfactorization}. 
In particular, the CSI relations in~\eqref{eqn:CStree relation} are a context-specific relaxation of the CI relations $X_{\pi_i}\independent \vX_{[\pi_1:\pi_{i-1}]\setminus\pa_{\mathcal{G}}(i)} \mid \vX_{\pa_{\mathcal{G}}(i)}$ that define the DAG model $\mathcal{M}(\mathcal{G})$. 
\end{remark}

As shown in Appendix~\ref{appsec: LDAG construction}, by repeated application of the \emph{context-specific decomposition property} \citep{CHKPV19} to the relations in $\mathcal{D}_\mathcal{T}$, one sees that all distributions in $\mathcal{M}(\mathcal{T})$ are also in $\mathcal{M}(\mathcal{G},\mathcal{L})$ for an appropriately defined LDAG $(\mathcal{G},\mathcal{L})$. 
More completely, we obtain the following result. 

\begin{theorem}
    \label{thm: containment}
    Let $\mathbb{D}$, $\mathbb{C}$, $\mathbb{L}$ and $\mathbb{S}$ denote the collections of DAG models, CStree models, LDAG models and staged tree models, respectively. 
    Then
    \[
    \mathbb{D}\subsetneq\mathbb{C}\subsetneq\mathbb{L}\subsetneq\mathbb{S}.
    \]
\end{theorem}

As shown in the details in Appendix~\ref{appsec: observational proofs}, the containments in Theorem~\ref{thm: containment} are in fact strict. 
Figure~\ref{fig:LDAGnotCStree} shows examples for each strict inequality. 
The interpretation of the staged tree graphs is described below, with further details in Remark~\ref{rem: why not general LDAGs} and Appendix~\ref{appsec: observational proofs}.

\begin{figure}[t]
    \begin{subfigure}[b]{0.3\textwidth}
    \centering
    \begin{tikzpicture}[thick,scale=0.4]
		\node[circle, draw, fill=black!0, inner sep=1pt, minimum width=1pt] (1) at (4,4) {\large$1$};
		\node[circle, draw, fill=black!0, inner sep=1pt, minimum width=1pt] (2) at (-4,4) {\large$2$};
		\node[circle, draw, fill=black!0, inner sep=1pt, minimum width=1pt] (3) at (0,0) {\large$3$};

		\draw[->]   (1) -- node[midway,sloped,above]{${\{(0)\}}$} (3) ;
            \draw[->]   (2) -- node[midway,sloped,above]{${\{(1)\}}$} (3) ;
	\end{tikzpicture}
	\caption{An LDAG encoding the relations $X_1 \independent X_2, X_3 \independent X_1 \mid X_2 = 0$ and $X_3 \independent X_1 \mid X_2 = 1$.}\label{fig:LDAGofCStreeNOTLDAG}
    \end{subfigure}
    \hfill
    \begin{subfigure}[b]{0.3\textwidth}
    \centering
    \begin{tikzpicture}[thick,scale=0.15]

		\node[draw, fill=blue!0, inner sep=2pt, rounded corners, minimum width=2pt] (w1) at (-2,14.25) {\scriptsize 111};
		\node[draw, fill=cyan!0, inner sep=2pt, rounded corners, minimum width=2pt] (w2) at (-2,11.25) {\scriptsize 110};
		\node[draw, fill=orange!0, inner sep=2pt, rounded corners, minimum width=2pt] (v1) at (-2,8.25) {\scriptsize 101};
		\node[draw, fill=cyan!0, inner sep=2pt, rounded corners, minimum width=2pt] (v2) at (-2,5.25) {\scriptsize 100};
		\node[draw, fill=red!0, inner sep=2pt, rounded corners, minimum width=2pt] (w1i) at (-2,2.25) {\scriptsize 011};
		\node[draw, fill=cyan!0, inner sep=2pt, rounded corners, minimum width=2pt] (w2i) at (-2,-0.75) {\scriptsize 010};
		\node[draw, fill=orange!0, inner sep=2pt, rounded corners, minimum width=2pt] (v1i) at (-2,-3.75) {\scriptsize 001};
		\node[draw, fill=cyan!0, inner sep=2pt, rounded corners, minimum width=2pt] (v2i) at (-2,-6.75) {\scriptsize 000};

		\node[draw, fill=green!60, inner sep=2pt, rounded corners, minimum width=2pt] (w) at (-8,12.75) {\scriptsize 11};
		\node[draw, fill=cyan!0, inner sep=2pt, rounded corners, minimum width=2pt] (v) at (-8,6.75) {\scriptsize 10};
		\node[draw, fill=green!60, inner sep=2pt, rounded corners, minimum width=2pt] (wi) at (-8,0.75) {\scriptsize 01};
		\node[draw, fill=green!60, inner sep=2pt, rounded corners, minimum width=2pt] (vi) at (-8,-5.25) {\scriptsize 00};

		\node[draw, fill=yellow!60, inner sep=2pt, rounded corners, minimum width=2pt] (r) at (-14,9.75) {\scriptsize 1};
		\node[draw, fill=yellow!60, inner sep=2pt, rounded corners, minimum width=2pt] (ri) at (-14,-1.75) {\scriptsize 0};

		\node[draw, fill=black!0, inner sep=2pt, rounded corners, minimum width=2pt] (I) at (-20,3) {\scriptsize r};

		\draw[->]   (I) --    (r) ;
		\draw[->]   (I) --   (ri) ;

		\draw[->]   (r) --   (w) ;
		\draw[->]   (r) --   (v) ;

		\draw[->]   (w) --  (w1) ;
		\draw[->]   (w) --  (w2) ;


		\draw[->]   (v) --  (v1) ;
		\draw[->]   (v) --  (v2) ;


		\draw[->]   (ri) --   (wi) ;
		\draw[->]   (ri) -- (vi) ;

		\draw[->]   (wi) --  (w1i) ;
		\draw[->]   (wi) --  (w2i) ;


		\draw[->]   (vi) --  (v1i) ;
		\draw[->]   (vi) --  (v2i) ;


		\node at (-17.5,-9) {$X_1$} ;
		\node at (-11.5,-9) {$X_2$} ;
		\node at (-5,-9) {$X_3$} ;

	\end{tikzpicture}
	\caption{Staged tree representation of the LDAG in Figure~\ref{fig:LDAGofCStreeNOTLDAG}.}\label{fig:CStreeofLDAGnotCStree}
    \end{subfigure}
    \hfill
    \begin{subfigure}[b]{0.3\textwidth}
    \centering
    \begin{tikzpicture}[thick,scale=0.15]
		\node[draw, fill=blue!0, inner sep=2pt, rounded corners, minimum width=2pt] (w1) at (-2,14.25) {\scriptsize 111};
		\node[draw, fill=cyan!0, inner sep=2pt, rounded corners, minimum width=2pt] (w2) at (-2,11.25) {\scriptsize 110};
		\node[draw, fill=orange!0, inner sep=2pt, rounded corners, minimum width=2pt] (v1) at (-2,8.25) {\scriptsize 101};
		\node[draw, fill=cyan!0, inner sep=2pt, rounded corners, minimum width=2pt] (v2) at (-2,5.25) {\scriptsize 100};
		\node[draw, fill=red!0, inner sep=2pt, rounded corners, minimum width=2pt] (w1i) at (-2,2.25) {\scriptsize 011};
		\node[draw, fill=cyan!0, inner sep=2pt, rounded corners, minimum width=2pt] (w2i) at (-2,-0.75) {\scriptsize 010};
		\node[draw, fill=orange!0, inner sep=2pt, rounded corners, minimum width=2pt] (v1i) at (-2,-3.75) {\scriptsize 001};
		\node[draw, fill=cyan!0, inner sep=2pt, rounded corners, minimum width=2pt] (v2i) at (-2,-6.75) {\scriptsize 000};

		\node[draw, fill=green!60, inner sep=2pt, rounded corners, minimum width=2pt] (w) at (-8,12.75) {\scriptsize 11};
		\node[draw, fill=cyan!0, inner sep=2pt, rounded corners, minimum width=2pt] (v) at (-8,6.75) {\scriptsize 10};
		\node[draw, fill=green!0, inner sep=2pt, rounded corners, minimum width=2pt] (wi) at (-8,0.75) {\scriptsize 01};
		\node[draw, fill=green!60, inner sep=2pt, rounded corners, minimum width=2pt] (vi) at (-8,-5.25) {\scriptsize 00};

		\node[draw, fill=yellow!60, inner sep=2pt, rounded corners, minimum width=2pt] (r) at (-14,9.75) {\scriptsize 1};
		\node[draw, fill=yellow!60, inner sep=2pt, rounded corners, minimum width=2pt] (ri) at (-14,-1.75) {\scriptsize 0};

		\node[draw, fill=black!0, inner sep=2pt, rounded corners, minimum width=2pt] (I) at (-20,3) {\scriptsize r};

		\draw[->]   (I) --    (r) ;
		\draw[->]   (I) --   (ri) ;

		\draw[->]   (r) --   (w) ;
		\draw[->]   (r) --   (v) ;

		\draw[->]   (w) --  (w1) ;
		\draw[->]   (w) --  (w2) ;

		\draw[->]   (v) --  (v1) ;
		\draw[->]   (v) --  (v2) ;

		\draw[->]   (ri) --   (wi) ;
		\draw[->]   (ri) -- (vi) ;

		\draw[->]   (wi) --  (w1i) ;
		\draw[->]   (wi) --  (w2i) ;

		\draw[->]   (vi) --  (v1i) ;
		\draw[->]   (vi) --  (v2i) ;

		\node at (-17.5,-9) {$X_1$} ;
		\node at (-11.5,-9) {$X_2$} ;
		\node at (-5,-9) {$X_3$} ;

	\end{tikzpicture}
	\caption{A staged tree that is not an LDAG.}\label{fig:notLDAG}
    \end{subfigure}
    
\caption{An LDAG that is not a CStree and a staged tree that is not an LDAG}
\label{fig:LDAGnotCStree}.
\end{figure}

Theorem~\ref{thm: containment} shows that every CStree may be represented with an LDAG, which is perhaps the most compact and interpretable representation of the model. 
However, to provide the desired extensions of CStree models to models for general, context-specific interventions, we will also use a more comprehensive representation of the CSI relations defining a CStree $\mathcal{T} = (\pi,\mathbf{s})$. 
Specifically, consider the rooted tree, also denoted by $\mathcal{T}$ for convenience, on vertex set $\{r\}\cup \bigcup_{i\in[p]}\mathcal{X}_{\pi_1:\pi_{i-1}}$ and edges $\vx_{\pi_1:\pi_{i-1}}\rightarrow \vx_{\pi_1:\pi_{i-1}x_{\pi_i}}$ for all $\vx_{\pi_1:\pi_{i-1}}\in\mathcal{X}_{\pi_1:\pi_{i-1}}$ for all $i\in\{2,\ldots, p\}$, and $r\rightarrow x_{\pi_1}$ for all $x_{\pi_1}\in\mathcal{X}_{\pi_1}$. 
We then color the nodes of $\mathcal{T}$ such that two nodes are the same color if and only if they belong to the same set $\mathcal{S}_{\pi,i}(\vx_S)\in\mathbf{s}$. 
We use the convention that all nodes contained in singleton sets $\mathcal{S}_{\pi,i}(\vx_S)$ are white; e.g., all white nodes are assumed to be distinctly colored. 

\begin{example}
    \label{ex: cstree}
    Consider the model $\MM$ consisting of all joint distributions $\vX = (X_1,X_2,X_3,X_4)$ for binary $X_i$ satisfying the CSI relations in $\mathcal{D}$ from Example~\ref{ex:CSI causal model}. 
    Here we let the outcomes ``no'' and ``yes'' of the variables $X_2,X_3,X_4$ correspond to $0$ and $1$, respectively. 
    Similarly, the outcomes ``low'' and ``high'' of variable $X_1$ correspond to $0$ and $1$, respectively.

    We use the causal order $\pi = 1234$, which is captured by the ordering of the variables under the binary rooted tree in Figure~\ref{fig:cstree}.
    Given this ordering the set $\mathcal{D}$ is the union of the sets 
    \[
    \mathcal{D}_{\pi,3} = \{X_3 \independent X_1 \mid X_2 = 1\} \, \mbox{ and } \, \mathcal{D}_{\pi,4} = \{X_4\independent \vX_{1,2} \mid X_3 = 0,\, X_4 \independent X_1 \mid  \vX_{2,3} = (0,1)\}.
    \]  
    The single relation in $\mathcal{D}_{\pi,3}$ defines the set of outcomes $\mathcal{S}_{\pi,3}(\vx_2 = 1) = \{(0,1),(1,1)\}$.
    These two nodes are colored green in Figure~\ref{fig:cstree} to represent the equality of conditional probabilities $P(X_3 \mid \vX_{1,2} = (0,1)) = P(X_3 \mid \vX_{1,2} = (1,1))$ corresponding to the CSI relation $X_3 \independent X_1 \mid X_2 = 1$. 

    Analogously, the sets $\{(0,0,1),(0,1,0),(1,0,0),(1,1,0)\}$ (blue) and $\{(0,0,1),(1,0,1)\}$ (orange) are colored to represent the first and second relations in $\mathcal{D}_{\pi,4}$, respectively.
    All remaining nodes $\vx_{\pi_1:\pi_{i-1}}$ and $\vx_{\pi_1:\pi_{i-1}}^\prime$ are colored white in Figure~\ref{fig:cstree} to indicate that the model does not assume any further relations of the form $P(X_{\pi_{i}} \mid \vx_{\pi_1:\pi_{i-1}}) = P(X_{\pi_i} \mid \vx_{\pi_1:\pi_{i-1}}^\prime)$. 
    These nodes correspond to singleton sets $\mathcal{S}_{\pi,i}(\vx_{\pi_1:\pi_{i-1}}) = \{\vx_{\pi_1:\pi_{i-1}}\}$. 
    Collecting these singleton sets with the sets of nodes above, we obtain a set $\mathbf{s}$. 

    From the tree in Figure~\ref{fig:cstree}, we see that the sets $\mathbf{s}_i$, for this model, partition $\mathcal{X}_{\pi_1:\pi_{i-1}}$ for all $i$, since none of the sets of colored nodes overlap. 
    Hence, the model specified by the CSI relations $\mathcal{D}$ from Example~\ref{ex:CSI causal model} is in fact the CStree model $\mathcal{T} = (\pi,\mathbf{s})$.
    Its more compact LDAG representation is depicted in Figure~\ref{fig:LDAG}, where the notation $\ast$ is used to indicate that any outcome of the corresponding variable may substituted to give an outcome contained in the edge label.
\end{example}

\begin{figure}[t]
    \begin{subfigure}[b]{0.45\textwidth}
    \centering
    \begin{tikzpicture}[thick,scale=0.15]
		\node[draw, fill=black!0, inner sep=2pt, rounded corners, minimum width=2pt] (w3) at (6,15)  {\scriptsize 1111};
		\node[draw, fill=black!0, inner sep=2pt, rounded corners, minimum width=2pt] (w4) at (6,13.5) {\scriptsize 1110};
		\node[draw, fill=black!0, inner sep=2pt, rounded corners, minimum width=2pt] (w5) at (6,12) {\scriptsize 1101};
		\node[draw, fill=black!0, inner sep=2pt, rounded corners, minimum width=2pt] (w6) at (6,10.5) {\scriptsize 1100};
		\node[draw, fill=black!0, inner sep=2pt, rounded corners, minimum width=2pt] (v3) at (6,9)  {\scriptsize 1011};
		\node[draw, fill=black!0, inner sep=2pt, rounded corners, minimum width=2pt] (v4) at (6,7.5) {\scriptsize 1010};
		\node[draw, fill=black!0, inner sep=2pt, rounded corners, minimum width=2pt] (v5) at (6,6) {\scriptsize 1001};
		\node[draw, fill=black!0, inner sep=2pt, rounded corners, minimum width=2pt] (v6) at (6,4.5) {\scriptsize 1000};
		\node[draw, fill=black!0, inner sep=2pt, rounded corners, minimum width=2pt] (w3i) at (6,3)  {\scriptsize 0111};
		\node[draw, fill=black!0, inner sep=2pt, rounded corners, minimum width=2pt] (w4i) at (6,1.5) {\scriptsize 0110};
		\node[draw, fill=black!0, inner sep=2pt, rounded corners, minimum width=2pt] (w5i) at (6,0) {\scriptsize 0101};
		\node[draw, fill=black!0, inner sep=2pt, rounded corners, minimum width=2pt] (w6i) at (6,-1.5) {\scriptsize 0100};
		\node[draw, fill=black!0, inner sep=2pt, rounded corners, minimum width=2pt] (v3i) at (6,-3)  {\scriptsize 0011};
		\node[draw, fill=black!0, inner sep=2pt, rounded corners, minimum width=2pt] (v4i) at (6,-4.5) {\scriptsize 0010};
		\node[draw, fill=black!0, inner sep=2pt, rounded corners, minimum width=2pt] (v5i) at (6,-6) {\scriptsize 0001};
		\node[draw, fill=black!0, inner sep=2pt, rounded corners, minimum width=2pt] (v6i) at (6,-7.5) {\scriptsize 0000};

		\node[draw, fill=blue!0, inner sep=2pt, rounded corners, minimum width=2pt] (w1) at (-2,14.25) {\scriptsize 111};
		\node[draw, fill=cyan!60, inner sep=2pt, rounded corners, minimum width=2pt] (w2) at (-2,11.25) {\scriptsize 110};
		\node[draw, fill=orange!60, inner sep=2pt, rounded corners, minimum width=2pt] (v1) at (-2,8.25) {\scriptsize 101};
		\node[draw, fill=cyan!60, inner sep=2pt, rounded corners, minimum width=2pt] (v2) at (-2,5.25) {\scriptsize 100};
		\node[draw, fill=red!0, inner sep=2pt, rounded corners, minimum width=2pt] (w1i) at (-2,2.25) {\scriptsize 011};
		\node[draw, fill=cyan!60, inner sep=2pt, rounded corners, minimum width=2pt] (w2i) at (-2,-0.75) {\scriptsize 010};
		\node[draw, fill=orange!60, inner sep=2pt, rounded corners, minimum width=2pt] (v1i) at (-2,-3.75) {\scriptsize 001};
		\node[draw, fill=cyan!60, inner sep=2pt, rounded corners, minimum width=2pt] (v2i) at (-2,-6.75) {\scriptsize 000};

		\node[draw, fill=green!60, inner sep=2pt, rounded corners, minimum width=2pt] (w) at (-8,12.75) {\scriptsize 11};
		\node[draw, fill=cyan!0, inner sep=2pt, rounded corners, minimum width=2pt] (v) at (-8,6.75) {\scriptsize 10};
		\node[draw, fill=green!60, inner sep=2pt, rounded corners, minimum width=2pt] (wi) at (-8,0.75) {\scriptsize 01};
		\node[draw, fill=cyan!0, inner sep=2pt, rounded corners, minimum width=2pt] (vi) at (-8,-5.25) {\scriptsize 00};

		\node[draw, fill=yellow!0, inner sep=2pt, rounded corners, minimum width=2pt] (r) at (-14,9.75) {\scriptsize 1};
		\node[draw, fill=yellow!0, inner sep=2pt, rounded corners, minimum width=2pt] (ri) at (-14,-1.75) {\scriptsize 0};

		\node[draw, fill=black!0, inner sep=2pt, rounded corners, minimum width=2pt] (I) at (-20,3) {\scriptsize r};

		\draw[->]   (I) --    (r) ;
		\draw[->]   (I) --   (ri) ;

		\draw[->]   (r) --   (w) ;
		\draw[->]   (r) --   (v) ;

		\draw[->]   (w) --  (w1) ;
		\draw[->]   (w) --  (w2) ;

		\draw[->]   (w1) --   (w3) ;
		\draw[->]   (w1) --   (w4) ;
		\draw[->]   (w2) --  (w5) ;
		\draw[->]   (w2) --  (w6) ;

		\draw[->]   (v) --  (v1) ;
		\draw[->]   (v) --  (v2) ;

		\draw[->]   (v1) --  (v3) ;
		\draw[->]   (v1) --  (v4) ;
		\draw[->]   (v2) --  (v5) ;
		\draw[->]   (v2) --  (v6) ;

		\draw[->]   (ri) --   (wi) ;
		\draw[->]   (ri) -- (vi) ;

		\draw[->]   (wi) --  (w1i) ;
		\draw[->]   (wi) --  (w2i) ;

		\draw[->]   (w1i) --  (w3i) ;
		\draw[->]   (w1i) -- (w4i) ;
		\draw[->]   (w2i) --  (w5i) ;
		\draw[->]   (w2i) --  (w6i) ;

		\draw[->]   (vi) --  (v1i) ;
		\draw[->]   (vi) --  (v2i) ;

		\draw[->]   (v1i) --  (v3i) ;
		\draw[->]   (v1i) -- (v4i) ;
		\draw[->]   (v2i) -- (v5i) ;
		\draw[->]   (v2i) --  (v6i) ;

		\node at (-17.5,-9) {$X_1$} ;
		\node at (-11.5,-9) {$X_2$} ;
		\node at (-5,-9) {$X_3$} ;
		\node at (2,-9) {$X_4$} ;

	\end{tikzpicture}
	\caption{A CStree $\mathcal{T}$ for variable ordering $\pi = 1234$.}\label{fig:cstree}
    \end{subfigure}
    \hfill
    \begin{subfigure}[b]{0.45\textwidth}
    \centering
    \begin{tikzpicture}[thick,scale=0.4]
		\node[circle, draw, fill=black!0, inner sep=1pt, minimum width=1pt] (H1) at (3.25,8) {\large$1$};
		\node[circle, draw, fill=black!0, inner sep=1pt, minimum width=1pt] (B1) at (-2.25,4) {\large$2$};
		\node[circle, draw, fill=black!0, inner sep=1pt, minimum width=1pt] (G1) at (8.25,4) {\large$3$};
		\node[circle, draw, fill=black!0, inner sep=1pt, minimum width=1pt] (B2) at (3.25,0) {\large$4$};

		\draw[->]   (H1) -- (B1) ;
		\draw[->]   (H1) -- node[midway,sloped,above]{${\{1\}}$}(G1) ;
		\draw[->]   (H1) -- node[align=center,below, rotate=-90]{{$\{(0,1),\, (\ast,0)\}$}} (B2) ;
		\draw[->]   (B1) -- node[align=center,below, rotate=-40]{{$\{(\ast,0)\}$}} (B2) ;
		\draw[->]   (G1) -- (B2) ;
		\draw[->]   (B1) -- (G1) ;
	\end{tikzpicture}
	\caption{The LDAG of the CStree in Figure~\ref{fig:cstree}.}\label{fig:LDAG}
    \end{subfigure}
    
\caption{A CStree representation of the context-specific conditional independence model $\mathcal{D}$ on four binary variables from Example~\ref{ex:CSI causal model}.}
\label{fig:CStreeExample}
\end{figure}

The colored tree representation of a CStree is its \emph{staged tree representation}. 
A general staged tree model corresponds to an arbitrary coloring of the rooted tree.
While this reveals why it may be difficult to extract causal information easily from a staged tree representation, the staged tree provides a complete representation of how a distribution factorizes according to a given causal ordering. 
Hence, we will, at times, make use of these representations to derive the desired factorization-based generalizations of interventional DAG models to CStrees. 

Given a CStree $\mathcal{T} = (\pi, \mathbf{s})$, the set $\mathcal{S}_{\pi,i}(\vx_S)$ is a \emph{stage}, $\vx_S$ is its \emph{stage-defining context}, and $S$ will be its set of \emph{context variables}. 
The set of outcomes $\mathcal{X}_{\pi_1:\pi_{i-1}}$ is referred to as \emph{level $i$} of $\mathcal{T} = (\pi, \mathbf{s})$ and $\mathbf{s}_i$ is a \emph{staging} of level $i$. 
In CStrees, each stage $s = \mathcal{S}_{\pi,i}(\vx_S)$ corresponds to a conditional distribution $X_{\pi_i} \mid \vX_S = \vx_S$ used in the factorization~\eqref{eqn:CStreefactorization}. 
This conditional distribution may be parameterized via $\theta_{\pi_i, s} = [\theta_{\pi_i, s, 1},\ldots,\theta_{\pi_i, s, d_{\pi_i}}]$ satisfying $\sum_{t=1}^{d_{\pi_i}}\theta_{\pi_i, s, t} = 1$ and $\theta_{\pi_i, s, t} > 0$ for all $t\in [d_{\pi_i}]$. 
We let $\theta_{\pi_i, \mathbf{s}_i} = [\theta_{\pi_i, s} : s\in\mathbf{s}_i]$, and we let $\theta_{\pi, \mathbf{s}} = [\theta_{\pi_i,\mathbf{s}_i} : i\in[p]]$. 
Then the triple $\mathcal{T} = (\pi, \mathbf{s}, \theta_{\pi,s})$ is a parameterized CStree. 
If one labels the edge $\vx_{\pi_1:\pi_{i-1}} \rightarrow \vx_{\pi_1:\pi_{i-1}}t$ with $\theta_{\pi_i, s, t}$, then multiplying the edge labels along all root-to-leaf paths in $\mathcal{T}$ yields the joint distribution of $\vX\in \mathcal{M}(\mathcal{T})$ with parameters $\theta_{\pi, \mathbf{s}}$. 
Specifically, since $\theta_{\pi_i,s,t} = P(X_{\pi_i} = t \mid \vX_S = \vx_S)$, this parameterization of the CStree model $\mathcal{M}(\mathcal{T})$ corresponds exactly to the factorization in~\eqref{eqn:CStreefactorization}.

\begin{remark}
    \label{rem: why not general LDAGs}
    As noted directly above, the stage-defining contexts $\vx_S$ correspond to the conditional factors $X_{\pi_i} \mid \vX_S = \vx_S$ used in the factorization~\eqref{eqn:CStreefactorization} of a CStree model. 
    The fact that the relevant conditional distributions in this factorization are specified by pairs $(\pi_i, \vx_S)$ yields a factorization of the distribution that is amenable to soft, context-specific, interventions in analogy to~\eqref{eqn:I-DAGfactorization}. 
    This factorization property will play a fundamental role in our generalization of the interventional DAG model $\MM(\GG, \ci)$. 
    Theorem~\ref{thm: containment} implies that not all LDAGs factor into a product of conditional factors indexed by node-context pairs; e.g., not all LDAGs are CStrees.
    For instance, the relations of the LDAG in Figure~\ref{fig:LDAGofCStreeNOTLDAG} imply the equality of conditional probabilities
    \[
    P(X_3 \mid \vX_{1,2} = (0,0)) = P(X_3 \mid \vX_{1,2} = (1,0)) = P(X_3 \mid \vX_{1,2} = (1,1)), 
    \]
    which cannot be represented by a single conditional distribution $X_{\pi_i} \mid \vX_S = \vx_S$. This equality is encoded by the three green nodes in the staged tree representation of the model in Figure~\ref{fig:CStreeofLDAGnotCStree}.
\end{remark}

\subsection{Markov properties of CStrees}
\label{sec:markov properties}
A CStree model is a \emph{context-specific conditional independence model} according to~\eqref{eqn:CSI model} and Definition~\ref{def:CStree}. 
Thus, one may ask for a \emph{global Markov property} for $\mathcal{M}(\mathcal{T})$; i.e., the complete set of CSI relations satisfied by all distributions in $\mathcal{M}(\mathcal{T})$. 
In analogy to DAGs, one would naturally like to have a graphical representation of $\mathcal{T}$ that allows one to easily read-off the more general CSI relations in the Markov property for $\mathcal{T}$. 
To provide such a global Markov property and corresponding graphical representation, we first define context-specific conditional independence models. 

\subsubsection{Conditional independence models}
\label{subsec: conditional independence axioms}
A \emph{conditional independence model} $\J$ over a set of variables $V$ is a collection of triples $\langle A, B, \mid C \rangle$ where $A, B, C \subseteq V$. 
By assumption $\J$ always contains the triples $\langle A, \emptyset \mid C\rangle$ and $\langle \emptyset, B \mid C\rangle$. 
A DAG $\GG = ([p], E)$ encodes the conditional independence model
\[
\J(\GG) = \{ \langle A, B \mid C\rangle : \mbox{ $A$ and $B$ are d-separated given $C$ in $\GG$}\}. 
\]
The model $\J(\GG)$ is called a \emph{graphoid} since it is closed under the conditional independence axioms, including the intersection axiom \citep{SL14}. 
When the triples $\langle A, B \mid C\rangle$ are interpreted as CI relations $\vX_A \independent \vX_B \mid \vX_C$ then the DAG model $\MM(\GG)$ is the set of all distributions satisfying all relations in $\J(\GG)$. 

If $\pi$ is a topological ordering of the DAG $\GG$, one may define a second conditional independence model for the pair $(\GG, \pi)$ reflecting the factorization definition~\eqref{eqn:DAGfactorization}
\[
\J(\GG, \pi) = \{ \langle \pi_i, [\pi_1:\pi_{i-1}]\setminus \pa_\GG(i) \mid \pa_\GG(i)\rangle : i\in[p]\}.
\]
The distributions satisfying all relations in $\J(\GG, \pi)$ in fact satisfy additional relations implied by the conditional independence axioms. 
Given a conditional independence model $\J$, the \emph{closure} $\overline{\J}$ of $\J$ is the conditional independence model produced by iteratively applying the conditional independence axioms to $\J$ and adding the resulting relations. 
A classic result (see \citep[Theorem~3.27]{L96}) states that $\overline{\J(\GG, \pi)} = \J(\GG)$.  
Hence, $\J(\GG)$ is called the \emph{global Markov property of $\GG$}, and $\vX$ satisfies the global Markov property of $\GG$ if it satisfies all relations in $\J(\GG)$. 

Since Definition~\ref{def:CStree} directly generalizes the factorization definition~\eqref{eqn:DAGfactorization} of the DAG model $\MM(\GG)$, in the following, we construct an analogous global Markov property for a CStree $\mathcal{T}$ by taking the context-specific closure of the set of relations $\MM(\mathcal{T})$. 

\subsubsection{Context-specific conditional independence models}
\label{subsec: context-specific conditional independence}

A \emph{context-specific conditional independence model} $\J$ over a set of categorical variables $V$ is a collection of quadruples $\langle A, B \mid C, \vx_S\rangle$ where $A, B, C, S\subseteq V$ are disjoint and $\vx_S$ is an outcome of the variables in $S$. 
By definition, $\J$ always contains the quadruples $\langle A, \emptyset \mid C, \vx_S\rangle$ and $\langle \emptyset, B \mid C, \vx_S\rangle$. 
The model $\J$ is called a \emph{graphoid} if it is closed under the following \emph{context-specific conditional independence axioms}:

\begin{enumerate}
	\item \emph{symmetry.} If $\langle A,B \mid C, \vx_S\rangle\in\J$ then $\langle B,A \mid C, \vx_S\rangle\in\J$.
	\item \emph{decomposition.} If $\langle A,B\cup D \mid C, \vx_S\rangle\in\J$ then $\langle A,B \mid C, \vx_S\rangle\in\J$.
	\item \emph{weak union.} If $\langle A,B\cup D \mid C, \vx_S\rangle\in\J$ then $\langle A,B \mid C\cup D, \vx_S\rangle\in\J$. 
	\item \emph{contraction.} If $\langle A,B \mid C\cup D, \vx_S\rangle\in\J$ and $\langle A,D \mid C, \vx_S\rangle\in\J$ then $\langle A,B\cup D \mid C, \vx_S\rangle\in\J$. 
	\item \emph{intersection.} If $\langle A,B \mid C\cup D, \vx_S\rangle\in\J$ and $\langle A,C \mid B\cup D, \vx_S\rangle\in\J$ then $\langle A, B\cup C \mid D, \vx_S\rangle\in\J$.  
	\item \emph{specialization.} If $\langle A,B \mid C, \vx_S\rangle\in\J$, $T\subseteq C$ and $\xx_T\in\mathcal{X}_T$, then $\langle A,B \mid C\setminus T, \xx_{T\cup S}\rangle\in\J$.
	\item \emph{absorption.} If $\langle A,B \mid C, \vx_S\rangle\in\J$, $T\subseteq S$ for which $\langle A,B \mid C, \xx_{S\setminus T}\xx_T\rangle\in\J$ for all $\xx_T \in \mathcal{X}_T$, then $\langle A,B \mid C\cup T, \xx_{S\setminus T}\rangle\in\J$. 
\end{enumerate}

The conditional independence axioms of Subsection~\ref{subsec: conditional independence axioms} correspond to (1) -- (5) with $S = \emptyset$, and the closure $\overline{\J}$ of a context-specific conditional independence model $\J$ is defined in the analogous way. The quadruples $\langle A, B \mid C, \vx_S\rangle$ are typically interpreted as CSI relations.

Given a CStree $\mathcal{T} = (\pi,\mathbf{s})$, it follows from Definition~\ref{def:CStree} that $\mathcal{M}(\mathcal{T})$ is the set of all distributions satisfying the CSI relations in the context-specific conditional independence model 
\[
\J(\mathcal{T},\pi) = \{\langle \pi_i, [\pi_1:\pi_{i-1}]\setminus S \mid \emptyset, \vX_S = \vx_S\rangle : X_{\pi_i} \independent \vX_{[\pi_1:\pi_{i-1}]\setminus S}\mid \vX_S = \vx_S \in \mathcal{C}_{\mathcal{T}}\}.
\]
Let $\J(\mathcal{T}) = \overline{\J(\mathcal{T},\pi)}$ denote the closure of $\J(\mathcal{T},\pi)$.
To obtain a global Markov property for $\mathcal{T}$ in analogy to $\J(\GG)$ for a DAG $\GG$, we wish to compute the closure $\J(\mathcal{T})$ of the model $\J(\mathcal{T},\pi)$ and provide some graphical interpretation of relations therein. 
To do so, we identify a special set of contexts associated to a CStree model. 
Specifically, by the absorption axiom, there exists a (finite) collection of contexts $\vx_M$ such that for any CSI relation
\[
\langle\vX_A, \vX_B \mid \vX_C, \vx_M\rangle\in \J(\mathcal{T}),
\]
there is no subset $T\subseteq M$ for which 
\[
\langle \vX_A, \vX_B \mid \vX_{C\cup T}, \vx_{M \setminus T}\rangle\in\J(\mathcal{T}).
\]
We call each such $\xx_M$ a \emph{minimal context} for $\mathcal{T}$, and we let 
\[
\mathcal{C}_\mathcal{T} = \{\vx_M: \vx_M \mbox{ is minimal for $\mathcal{T}$}\} \cup \{\vx_\emptyset\}
\]
denote the collection of minimal contexts for the CStree $\mathcal{T}$ with the \emph{empty context} $\vx_\emptyset$ added in. 
Since $M = \emptyset$ in the empty context, it corresponds to no variable having a fixed outcome. 
Note that in some cases, repeated use of the absorption axiom can lead to a CI relation $\vX_A \independent \vX_B \mid \vX_D$ being in $\J(\mathcal{T})$. 
These relations have minimal context $\vx_\emptyset$. 
It follows from \citep[Proposition~2.2]{DS21} that $\MM(\mathcal{T})$ is a DAG model if and only if $\mathcal{C}_\mathcal{T} = \{\vx_\emptyset\}$. 

To extract a graphical representation of relations in $\J(\mathcal{T})$ we will use the following lemma.
\begin{lemma}
\label{lem: subcontexts}
Suppose that $\langle\vX_A, \vX_B \mid \vX_C, \vx_M\rangle\in\J(\mathcal{T})$. 
Then either
\begin{enumerate}
\item $\vx_M\in\mathcal{C}_\mathcal{T}$, or
\item $\langle\vX_A, \vX_B \mid \vX_C, \vx_M\rangle$ is implied by the specialization of some $\langle\vX_A, \vX_B \mid \vX_{C^\prime}, \vx_{M^\prime}\rangle \in \J(\mathcal{T})$, where $\vx_{M^\prime}\in\mathcal{C}_\mathcal{T}$. 
\end{enumerate}
In particular, every stage-defining context contains $\vx_S$ a minimal context.
\end{lemma}

It follows from Lemma~\ref{lem: subcontexts}, and the fact that axioms $(1) - (5)$ commute with
absorption, that $\J(\mathcal{T})$ is equal to the closure under specialization of the
union of context-specific graphiods 
\[
\bigcup_{\xx_M\in\mathcal{C}_\mathcal{T}}\J_{\xx_M},
\]
where $\J_{\xx_M}$ consists of all relations in $\J(\mathcal{T})$ with context $\xx_M$. 
To give a graphical representation of $\mathcal{T}$ whose combinatorics easily encodes relations in the global Markov property $\J(\mathcal{T})$, we will give a DAG representation $\GG_{\vx_M}$ of each model $\J_{\vx_M}$.  
These DAGs are minimal I-MAPs of the relations in $\J_{\vx_M}$ with respect to the causal order $\pi$ defining $\mathcal{T} = (\pi, \mathbf{s})$. 

\begin{definition}\citep{VP90}
    \label{def: minimal I-MAP}
    Let $\J$ be a conditional independence model on variables $[p]$, and let $\pi$ be a total ordering of $[p]$. 
    The DAG $\GG$ on node set $[p]$ with edge set
    \[
    E = \{ \pi_i \rightarrow \pi_j : i < j \mbox{ and } \langle \pi_j, \pi_i | [\pi_1:\pi_{j-1}]\setminus \pi_i\rangle \notin \J\}
    \]
    is called the \emph{minimal I-MAP} of $\J$ with respect to $\pi$. 
\end{definition}

We can then define the following alternative graphical representation of $\mathcal{T}$.

\begin{definition}
    \label{def: minimal context graphs}
    Let $\mathcal{T} = (\pi, \mathbf{s})$ be a CStree with set of minimal contexts $\mathcal{C}_\mathcal{T}$. 
    For $\vx_M\in\mathcal{C}_\mathcal{T}$, we let $\GG_{\vx_M}$ denote the minimal I-MAP of 
    \[
    \{\langle A, B \mid C\rangle : \langle A, B \mid C, \vx_M \rangle \in\J_{\vx_M}\}
    \]
    with respect to $\pi$. 
    The DAG $\GG_{\vx_M}$ is the \emph{minimal context graph} for $\vx_M$, and $\GG_{\mathcal{T}} = \{\GG_{\vx_M}\}_{\vx_M\in\mathcal{C}_\mathcal{T}}$ is called the \emph{minimal context graph representation} of $\mathcal{T}$. 
\end{definition}

It can be checked that the minimal contexts for the CStree $\mathcal{T}$ in Figure~\ref{fig:cstree} from Example~\ref{ex: cstree} are $\mathcal{C}_\mathcal{T} = \{\vx_\emptyset, x_2 = 0, x_2 = 1, x_3 = 0\}$. 
The resulting minimal context graph representation of $\mathcal{T}$ is depicted in Figure~\ref{fig:minimal context graphs}.
\begin{figure}[t]
    \centering
 \begin{tabular}{| c | c | c | c |}\hline
    \begin{tikzpicture}[thick,scale=0.25]
		\node[circle, draw, fill=black!0, inner sep=1pt, minimum width=1pt] (1) at (3.25,8) {\large$1$};
		\node[circle, draw, fill=black!0, inner sep=1pt, minimum width=1pt] (2) at (-2.25,4) {\large$2$};
		\node[circle, draw, fill=black!0, inner sep=1pt, minimum width=1pt] (3) at (8.25,4) {\large$3$};
		\node[circle, draw, fill=black!0, inner sep=1pt, minimum width=1pt] (4) at (3.25,0) {\large$4$};

		\draw[->]   (1) -- (2) ;
		\draw[->]   (1) -- (3) ;
		\draw[->]   (1) -- (4) ;
		\draw[->]   (2) -- (3) ;
		\draw[->]   (2) -- (4) ;
		\draw[->]   (3) -- (4) ;

            \node at (-2.25, 8) {$\GG_{\vx_\emptyset}$} ;
	\end{tikzpicture}
 &
 \begin{tikzpicture}[thick,scale=0.25]
		\node[circle, draw, fill=black!0, inner sep=1pt, minimum width=1pt] (1) at (3.25,8) {\large$1$};
		\node[circle, draw, fill=black!0, inner sep=1pt, minimum width=1pt] (3) at (8.25,4) {\large$3$};
		\node[circle, draw, fill=black!0, inner sep=1pt, minimum width=1pt] (4) at (3.25,0) {\large$4$};

		\draw[->]   (1) -- (3) ;
		\draw[->]   (3) -- (4) ;

            \node at (-2.25, 8) {$\GG_{x_2 = 0}$} ;
    \end{tikzpicture}
 &
 \begin{tikzpicture}[thick,scale=0.25]
		\node[circle, draw, fill=black!0, inner sep=1pt, minimum width=1pt] (1) at (3.25,8) {\large$1$};
		\node[circle, draw, fill=black!0, inner sep=1pt, minimum width=1pt] (3) at (8.25,4) {\large$3$};
		\node[circle, draw, fill=black!0, inner sep=1pt, minimum width=1pt] (4) at (3.25,0) {\large$4$};

		\draw[->]   (1) -- (4) ;
		\draw[->]   (3) -- (4) ;

            \node at (-2.25, 8) {$\GG_{x_2 = 1}$} ;
    \end{tikzpicture}
 & 
 \begin{tikzpicture}[thick,scale=0.25]
		\node[circle, draw, fill=black!0, inner sep=1pt, minimum width=1pt] (1) at (3.25,8) {\large$1$};
		\node[circle, draw, fill=black!0, inner sep=1pt, minimum width=1pt] (2) at (-2.25,4) {\large$2$};
            \node at (8.25,4) {\,} ;
		\node[circle, draw, fill=black!0, inner sep=1pt, minimum width=1pt] (4) at (3.25,0) {\large$4$};

		\draw[->]   (1) -- (2) ;

            \node at (-2.25, 8) {$\GG_{x_3 = 0}$} ;
    \end{tikzpicture}
 \\\hline
 \end{tabular}
\caption{The minimal context graphs of the CStree in Figure~\ref{fig:cstree}.}
\label{fig:minimal context graphs}
\end{figure}

\begin{definition}
    \label{def: global MP}
    We say that a distribution $\vX$ \emph{satisfies the global Markov property} with respect to a CStree $\mathcal{T}$ if, for all $\vx_M\in\mathcal{C}_\mathcal{T}$, $\vX$ entails the CSI relation $\vX_A \independent \vX_B \mid \vX_C, \vX_M = \vx_M$ whenever $A$ and $B$ are d-separated given $C$ in $\GG_{\vx_M}$. 
Let $\MM(\GG_\mathcal{T})$ denote the set of all distributions that satisfy the global Markov property with respect to $\mathcal{T}$.
\end{definition}

We have the following theorem.

\begin{theorem}
\label{thm: markov to context graphs}
Let $\mathcal{T} = (\pi, \mathbf{s})$ be a CStree, and let $\vX = (X_1,\ldots, X_p)$ be a categorical distribution.
The following are equivalent:
\begin{enumerate}
	\item $\vX$ is Markov $\mathcal{T}$, 
	\item $\vX$ satisfies the global Markov property with respect to $\mathcal{T}$, and 
	\item for all $\vx_M\in\mathcal{C}_\mathcal{T}$,
	\[
	P(\vX_{[p]\setminus M} \mid \vX_M = \vx_M) = \prod_{k\in[p]\setminus M}P(X_k \mid \vX_{\pa_{\GG_{\vx_M}}(k)}, \vX_M = \vx_M).
	\]
\end{enumerate}
In particular, $\MM(\GG_\mathcal{T}) = \MM(\mathcal{T})$.
\end{theorem}

\begin{remark}
    \label{rem: incomplete}
    While the equivalence of~(1) and~(2) in Theorem~\ref{thm: markov to context graphs} provides an analogy between equivalence of factorizing according to a DAG $\GG$ and satisfying the global Markov property with respect to $\GG$, it is important to note that the CStree global Markov property differs in one important way: 
    The global Markov property for DAGs is \emph{complete}; i.e., any CI relation in the closure of the model-defining relations $\overline{\J(\GG,\pi)}$ is witnessed as a d-separation in the DAG $\GG$ \citep{VP90}. 
    In particular, there is a combinatorial rule for the graphical representation $\GG$ of the model $\MM(\GG)$ that captures all CI relations in $\J(\GG)$.  
    While the global Markov property for CStrees given above graphically represents many more CSI relations than the model-defining relations $\J(\mathcal{T}, \pi)$, there exist CStree models for which there are relations in the closure $\J(\mathcal{T}) = \overline{\J(\mathcal{T},\pi)}$ that are not explicitly represented by a d-separation in any of the minimal context graphs. 
    More generally, there is no known family of context-specific conditional independence models with a known complete global Markov property. 
\end{remark}

\begin{remark}
    \label{rem: LDAG markov property}
    An alternative global Markov property for CStrees, analogous to that in Theorem~\ref{thm: markov to context graphs}~(2) may be obtained using their LDAG representation and \citep[Theorem~4]{PNKC15}. 
    Specifically, \citep[Theorem~4]{PNKC15} characterizes LDAG model membership as satisfying all CSI relations encoded by d-separations in a collection of context graphs, one for each outcome $\vx\in \mathcal{X}$. 
    In general, the set of minimal contexts $\mathcal{C}_\mathcal{T}$ needed in Theorem~\ref{thm: markov to context graphs}~(2) will be less than the number of joint outcomes $\mathcal{X}$ of the entire distribution.
\end{remark}

\subsection{Model equivalence}
\label{subsec: model equivalence}
We say that two CStrees $\mathcal{T}$ and $\mathcal{T}^\prime$ are \emph{Markov equivalent} if $\MM(\mathcal{T}) = \MM(\mathcal{T}^\prime)$. 
Using the global Markov property of CStrees in Definition~\ref{def: global MP} and the equivalence of~(1) and~(2) in Theorem~\ref{thm: markov to context graphs}, we obtain a characterization of Markov equivalence of CStrees. 
To do so, we first observe the following lemma.

\begin{lemma}
\label{lem: equal minimal contexts}
If $\mathcal{T}$ and $\mathcal{T}^\prime$ are Markov equivalent CStrees then their sets of minimal contexts are
equal; that is, $\mathcal{C}_\mathcal{T}  = \mathcal{C}_{\mathcal{T}^\prime}$.
\end{lemma}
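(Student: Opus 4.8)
The plan is to reduce the statement to the equality of the two context-specific closures, $\J(\TT) = \J(\TT')$, and then to recover this equality from the equality of the models. The first reduction is essentially definitional: a context $X_C = \xx_C$ lies in $\CC_\TT$ precisely when it is a context occurring in $\J(\TT)$ that is minimal with respect to the absorption axiom, i.e.\ no nonempty $T \subseteq C$ admits the absorbed relation back in $\J(\TT)$. Together with Lemma~\ref{lem: subcontexts}, which guarantees every relation has a context that is a specialization of a minimal one, this shows that $\CC_\TT$ is a function of $\J(\TT)$ alone, and likewise $\CC_{\TT'}$ of $\J(\TT')$. Hence once $\J(\TT) = \J(\TT')$ is established, $\CC_\TT = \CC_{\TT'}$ follows at once.

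To obtain $\J(\TT) = \J(\TT')$ from $\MM_{(\TT,\theta)} = \MM_{(\TT',\theta')}$, I would show that $\J(\TT)$ is recoverable from the model: namely that $\J(\TT)$ equals the set $\mathrm{CSI}(\MM_{(\TT,\theta)})$ of all CSI relations entailed by \emph{every} distribution in the model. The inclusion $\J(\TT) \subseteq \mathrm{CSI}(\MM_{(\TT,\theta)})$ is immediate, since by \citep[Theorem 3]{DG20} every $f \in \MM_{(\TT,\theta)}$ is Markov to $\TT$ and hence entails all of $\J(\TT)$. Granting the reverse inclusion (a faithfulness statement, discussed below), statistical equivalence then gives
\[
\J(\TT) = \mathrm{CSI}(\MM_{(\TT,\theta)}) = \mathrm{CSI}(\MM_{(\TT',\theta')}) = \J(\TT'),
\]
because the set of CSI relations entailed by a model depends only on the underlying set of distributions. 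Combined with the previous paragraph, this yields $\CC_\TT = \CC_{\TT'}$.

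The main obstacle is therefore the reverse inclusion $\mathrm{CSI}(\MM_{(\TT,\theta)}) \subseteq \J(\TT)$, i.e.\ the existence of faithful distributions in a CStree model, which is the exact analogue of the completeness of $d$-separation underlying Theorem~\ref{thm: verma and pearl}. I would establish it contrapositively: given a CSI relation $R = \langle A, B \mid S, X_D = \xx_D\rangle \notin \J(\TT)$, I produce some $f \in \MM_{(\TT,\theta)}$ violating $R$. Here Theorem~\ref{thm: markov to context graphs} and Lemma~\ref{lem: subcontexts} are the right tools: they let me locate the minimal context $X_C = \xx_C \in \CC_\TT$ with $C \subseteq D$ that governs the model inside $X_D = \xx_D$ and identify the conditional model $f(X_{[p]\setminus C}\mid X_C = \xx_C)$ with a DAG model refined by the context graph $\GG_{X_C = \xx_C}$. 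Since $R \notin \J(\TT)$, the pair $A,B$ is not $d$-separated given $S$ in the relevant restriction of $\GG_{X_C = \xx_C}$, so by completeness of $d$-separation for DAGs there is a distribution in that conditional DAG model failing $X_A \independent X_B \mid X_S$.

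It then remains to lift this local violation to a global $f \in \MM_{(\TT,\theta)}$: using the per-context factorization of Theorem~\ref{thm: markov to context graphs}, I would assemble $f$ from the chosen violating conditional on $X_C = \xx_C$ together with arbitrary conditionals on the remaining minimal contexts, so that the product still factorizes according to $\TT$ yet fails $R$, giving $R \notin \mathrm{CSI}(\MM_{(\TT,\theta)})$. I expect the delicate point to be the bookkeeping for parameters shared across contexts through the staging of $\TT$, namely checking that the violating conditional can be realized by genuine CStree parameters; I would handle this by the genericity principle that the realizations compatible with the staging form a full-dimensional family, so such a choice is always available. Establishing this faithfulness statement is the crux of the proof; granting it, the chain of equalities above delivers $\J(\TT) = \J(\TT')$ and hence $\CC_\TT = \CC_{\TT'}$.
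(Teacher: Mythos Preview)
Your reduction of the lemma to the equality $\J(\TT)=\J(\TT')$, together with the observation that $\CC_\TT$ is determined by $\J(\TT)$, is the same starting point as the paper's proof. Where you diverge is in how you extract $\J(\TT)=\J(\TT')$ from $\MM_{(\TT,\theta)}=\MM_{(\TT',\theta')}$. The paper does not attempt a constructive faithfulness argument; instead it invokes the algebro-geometric description of staged tree models from \cite{DG20}: each model is the intersection of an irreducible variety $V(P_\TT)$ with the open simplex, and equality of the models forces equality of the prime ideals $P_\TT=P_{\TT'}$. A CSI relation $R\in\J(\TT)\setminus\J(\TT')$ produced by the case analysis then yields polynomials $I_R\subset P_\TT=P_{\TT'}$, so every $f\in\MM_{\TT'}$ satisfies $R$; the paper then concludes $R\in\J(\TT')$, a contradiction. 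Your approach, by contrast, would exhibit an explicit $f\in\MM_{\TT'}$ violating $R$ via completeness of $d$-separation in a context graph plus a lifting step. Both routes ultimately rest on the same completeness assertion---that a CSI satisfied by every distribution in the model must lie in the syntactic closure $\J$---and in fact the paper's final sentence simply asserts this step, whereas you try to justify it.

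Two cautions about your sketch. First, you aim for the strong statement $\J(\TT)=\mathrm{CSI}(\MM_{(\TT,\theta)})$ for a \emph{single} tree and \emph{arbitrary} CSI relations $R=\langle A,B\mid S,X_D=\xx_D\rangle$; here your step ``locate the minimal context $X_C=\xx_C\in\CC_\TT$ with $C\subseteq D$'' can fail, since not every context has a minimal subcontext in $\CC_\TT$ (the empty context need not be minimal). For the lemma you only need the weaker comparative statement for the specific $R$ produced by the case analysis, whose context is already minimal in one of the two trees, and this largely dissolves the issue. Second, the lifting---realizing a violating conditional on one context by a genuine $f\in\MM_{(\TT,\theta)}$ while respecting parameters shared across stages---is exactly the construction the paper carries out in the proof of Theorem~\ref{thm: first characterization} (via Meek's faithful-distribution lemma applied to a connecting subgraph of the context DAG, followed by stage-wise parameter assignment). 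So the machinery you need is available, but it is nontrivial and not covered by a one-line genericity appeal; if you pursue your route, you should import that construction rather than sketch around it.
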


We can then prove the following theorem. 

\begin{theorem}
\label{thm: first characterization}
Two CStrees, $\mathcal{T}$ and $\mathcal{T}^\prime$, are Markov equivalent if and only if they have the same set of minimal contexts and their minimal contexts graphs are pairwise Markov equivalent; that is, $\mathcal{C}_\mathcal{T} = \mathcal{C}_{\mathcal{T}^\prime}$ and $\GG_{\vx_M}\in\GG_\mathcal{T}$ and $\GG^\prime_{\vx_M}\in\GG_{\mathcal{T}^\prime}$ are Markov equivalent for all $\vx_M\in \mathcal{C}_\mathcal{T}$.
\end{theorem}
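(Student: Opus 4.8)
The plan is to pass from each CStree model to its collection of context graphs via Theorem~\ref{thm: markov to context graphs}, reducing statistical equivalence to a context-by-context comparison of DAGs, and then to invoke Theorem~\ref{thm: verma and pearl}. Throughout I use that $\MM_{(\TT,\theta)} = \MM(\GG_\TT)$ and $\MM_{(\TT',\theta')} = \MM(\GG_{\TT'})$, and that membership in $\MM(\GG_\TT)$ is defined purely by the $d$-separation statements of the graphs $\GG_{X_C=\xx_C}$, $X_C=\xx_C\in\CC_\TT$.

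For the backward implication, assume $\CC_\TT=\CC_{\TT'}$ and that $\GG_{X_C=\xx_C}$ and $\GG'_{X_C=\xx_C}$ are Markov equivalent for each minimal context. By Theorem~\ref{thm: verma and pearl}, Markov equivalent DAGs encode identical $d$-separations, so for every context the families of CSI relations imposed on a distribution by $\GG_{X_C=\xx_C}$ and by $\GG'_{X_C=\xx_C}$ coincide. Hence the defining conditions of $\MM(\GG_\TT)$ and $\MM(\GG_{\TT'})$ agree relation-by-relation, giving $\MM(\GG_\TT)=\MM(\GG_{\TT'})$ and thus statistical equivalence.

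For the forward implication, assume $\MM_{(\TT,\theta)}=\MM_{(\TT',\theta')}$. Lemma~\ref{lem: equal minimal contexts} supplies $\CC_\TT=\CC_{\TT'}=:\CC$, so it remains to show that $\GG_{X_C=\xx_C}$ and $\GG'_{X_C=\xx_C}$ are Markov equivalent for each $X_C=\xx_C\in\CC$. I would deduce this from the claim that the conditional family
\[
\{\, f(X_{[p]\setminus C}\mid X_C=\xx_C) : f\in\MM_{(\TT,\theta)} \,\}
\]
equals the discrete DAG model $\MM(\GG_{X_C=\xx_C})$ on the variables $X_{[p]\setminus C}$. Granting the claim, equality of the two CStree models forces equality of the corresponding conditional families, so $\MM(\GG_{X_C=\xx_C})=\MM(\GG'_{X_C=\xx_C})$ as discrete DAG models; since equal DAG models are Markov equivalent, Theorem~\ref{thm: verma and pearl} then yields that the two context graphs share a skeleton and v-structures, i.e. are Markov equivalent.

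The inclusion ``$\subseteq$'' of the claim is immediate from Theorem~\ref{thm: markov to context graphs}(3). The reverse inclusion is the crux, and the step I expect to be the main obstacle. Given $g\in\MM(\GG_{X_C=\xx_C})$, I would construct a preimage $f\in\MM_{(\TT,\theta)}$ directly from the staged-tree parametrization: choose, for each stage $S\subset L_{k-1}$, a positive distribution $p_S$ on $[d_k]$ and set $f(x_k\mid x_1\cdots x_{k-1})=p_{S(x_1\cdots x_{k-1})}(x_k)$, which automatically places $f$ in $\MM_{(\TT,\theta)}$. For a stage $S$ meeting the slice $X_C=\xx_C$ at a level $k\notin C$, I set $p_S=g(\,\cdot\mid \xx_{\pa_{\GG_{X_C=\xx_C}}(k)})$, and I choose the remaining stage parameters arbitrarily (positive). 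The delicate point is that this assignment is well-defined: if $S$ is the stage obtained by fixing the coordinates indexed by $C_S$, then applying weak union and the specialization axiom to the CSI relation of Lemma~\ref{lem: context-specific independencies} inside the context $X_C=\xx_C$ shows $\pa_{\GG_{X_C=\xx_C}}(k)\subseteq C_S\setminus C$; consequently all vertices of $S$ consistent with $X_C=\xx_C$ agree on their $\pa_{\GG_{X_C=\xx_C}}(k)$-coordinates and are assigned the same value of $g$. A final application of Theorem~\ref{thm: markov to context graphs}(3) to the constructed $f$ confirms $f(\,\cdot\mid X_C=\xx_C)=\prod_{k\notin C}g(x_k\mid \xx_{\pa_{\GG_{X_C=\xx_C}}(k)})=g$, completing the claim. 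The principal difficulty is thus entirely concentrated in this realizability step — guaranteeing a consistent, model-respecting choice of parameters across the possibly many context-consistent vertices of a shared stage — which the containment $\pa_{\GG_{X_C=\xx_C}}(k)\subseteq C_S\setminus C$ coming from the minimal I-MAP construction is designed to control.
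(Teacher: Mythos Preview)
Your backward implication is correct and matches the paper. The gap is in the forward direction, specifically in the ``$\supseteq$'' half of your realizability claim: the identity
\[
\{\, f(X_{[p]\setminus C}\mid X_C=\xx_C) : f\in\MM_{(\TT,\theta)} \,\}=\MM(\GG_{X_C=\xx_C})
\]
is \emph{false} in general, so the argument cannot be repaired by strengthening the well-definedness step. Your containment $\pa_{\GG_{X_C=\xx_C}}(k)\subseteq C_S\setminus C$ is what the proof of Theorem~\ref{thm: markov to context graphs} establishes, but only when the minimal context $X_C=\xx_C$ is a \emph{subcontext} of the stage-defining context $X_{C_S}=\xx_{C_S}$; specialization and weak union applied to the CSI of Lemma~\ref{lem: context-specific independencies} cannot produce a statement with context $X_C=\xx_C$ otherwise. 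A stage can meet the slice $X_C=\xx_C$ without $C\subseteq C_S$. For a concrete failure, take $p=4$ binary with one non-singleton stage $S_1\subset L_2$ defined by $X_2=x_2^1$ and one non-singleton stage $S_2\subset L_3$ defined by $X_1=x_1^1$. Then $X_2=x_2^1\in\CC_\TT$ and $\pa_{\GG_{X_2=x_2^1}}(4)=\{1,3\}$, yet $S_2$ meets the slice with $C_{S_2}\setminus C=\{1\}$. Every $f\in\MM_{(\TT,\theta)}$ satisfies $f(x_4\mid x_1^1,x_2^1,x_3^1)=f(x_4\mid x_1^1,x_2^1,x_3^2)$ because both vertices lie in $S_2$, so the conditional family is a proper subset of $\MM(\GG_{X_2=x_2^1})$ and no preimage of a generic $g$ exists.

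The paper circumvents this by not attempting to realize an arbitrary $g$. It argues by contradiction: if some $d$-separation holds in $\GG_{X_C=\xx_C}$ but fails in $\GG'_{X_C=\xx_C}$, it extracts a single $d$-connecting path in $\GG'_{X_C=\xx_C}$, invokes a Meek-type lemma to produce a discrete distribution on the path variables that witnesses the dependence, and embeds \emph{that one distribution} into $\MM_{\TT'}$ via a parameter assignment that only has to be coherent along the root-to-leaf trajectories touched by the path. Since $\MM_\TT=\MM_{\TT'}$, the resulting $f$ would be Markov to $\GG_\TT$ and hence satisfy the $d$-separation, a contradiction. The point is that building one carefully chosen witness is strictly easier than surjecting onto the whole DAG model, and only the former is needed.
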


\citet{VP92} showed that two DAGs $\GG$ and $\HH$ are Markov equivalent if and only if they have the same skeleton and set of v-structures. 
The \emph{skeleton} of a DAG $\GG$ is the undirected graph of adjacencies in $\GG$, and a \emph{v-structure} in $\GG$ is a path $i\rightarrow j \leftarrow k$ where $i$ and $k$ are not adjacent. 
Hence, we have the following corollary to Theorem~\ref{thm: first characterization}, generalizing the result of \citet{VP92} to CStrees. 

\begin{corollary}
\label{cor: VP generalization}
Two CStrees $\mathcal{T}$ and $\mathcal{T}^\prime$ are Markov equivalent if and only if $\mathcal{C}_\mathcal{T} = \mathcal{C}_{\mathcal{T}^\prime}$ and for all $\vx_M\in\mathcal{C}_\mathcal{T}$, the graphs $\GG_{\vx_M}\in\GG_\mathcal{T}$ and $\GG_{\vx_M}^\prime\in\GG_{\mathcal{T}^\prime}$ have the same skeleton and v-structures.  
\end{corollary}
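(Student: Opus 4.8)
The plan is to deduce this corollary directly from Theorem~\ref{thm: first characterization} by invoking the Verma--Pearl characterization of Markov equivalence for DAGs (Theorem~\ref{thm: verma and pearl}) applied to each pair of context graphs. Theorem~\ref{thm: first characterization} has already reduced statistical equivalence of $\TT$ and $\TT^\prime$ to the two conditions that $\CC_\TT = \CC_{\TT^\prime}$ and that, for every $X_C = \xx_C\in\CC_\TT$, the context graphs $\GG_{X_C = \xx_C}\in\GG_\TT$ and $\GG^\prime_{X_C = \xx_C}\in\GG_{\TT^\prime}$ are Markov equivalent. All that remains is therefore to translate the phrase ``Markov equivalent'' into the skeleton-and-v-structure condition.

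First I would observe that each context graph $\GG_{X_C = \xx_C}$ is, by construction, an ordinary DAG: it is defined as the minimal I-MAP, taken with respect to the induced causal ordering on $[p]\setminus C$, of the collection of (context-free) CI relations extracted from $\J_{X_C = \xx_C}$. Since a minimal I-MAP is a DAG on the node set $[p]\setminus C$, the classical theory of DAG Markov equivalence applies verbatim to the pair $\GG_{X_C = \xx_C}$ and $\GG^\prime_{X_C = \xx_C}$.

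Next I would apply Theorem~\ref{thm: verma and pearl} to this pair for each fixed context: the DAGs $\GG_{X_C = \xx_C}$ and $\GG^\prime_{X_C = \xx_C}$ are Markov equivalent if and only if they have the same skeleton and the same v-structures. Substituting this equivalence into the second condition of Theorem~\ref{thm: first characterization}, while leaving the first condition $\CC_\TT = \CC_{\TT^\prime}$ unchanged, yields exactly the stated characterization.

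There is essentially no hard step here; the corollary is a straightforward combination of Theorem~\ref{thm: first characterization} and Theorem~\ref{thm: verma and pearl}. The only point requiring a moment's attention---hardly an obstacle---is the verification that the objects being compared really are DAGs, so that Verma--Pearl is applicable; this is immediate from the definition of the context graphs as minimal I-MAPs. I would also note that the skeleton-and-v-structure comparison is carried out on the common node set $[p]\setminus C$ for each context, which is well-defined precisely because the condition $\CC_\TT = \CC_{\TT^\prime}$ forces the two families of context graphs to be indexed by the same set of minimal contexts.
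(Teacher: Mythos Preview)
Your proposal is correct and matches the paper's approach exactly: the corollary is stated without a separate proof in the paper, being presented as the immediate consequence of combining Theorem~\ref{thm: first characterization} with the Verma--Pearl characterization (Theorem~\ref{thm: verma and pearl}) applied context-by-context. Your observation that the context graphs are ordinary DAGs on the common node set $[p]\setminus C$, so that Verma--Pearl applies verbatim, is precisely the (trivial) content needed.
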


\begin{example}
    \label{ex: equivalent CStrees}
    The two CStrees $\mathcal{T}_A = (\pi_A, \mathbf{s}_A)$ and $\mathcal{T}_B = (\pi_B, \mathbf{s}_B)$ depicted in Figure~\ref{fig: equivalent CStrees} are Markov equivalent to the CStree $\mathcal{T}$ in Figure~\ref{fig:cstree} that represents our chicken pox model $\mathcal{D}$ from Example~\ref{ex:CSI causal model}.
    The staged tree representations of these CStrees are presented in Figure~\ref{fig: equivalent CStrees}, showing how these representations can change between equivalent models.
    The tree $\mathcal{T}_A$ has causal order $\pi_A = 2134$, and $\mathcal{T}_B$ has causal order $\pi_B = 3214$. 
    The minimal context graph representation for $\mathcal{T}$ is depicted in Figure~\ref{fig:minimal context graphs}. 
    With the help of Corollary~\ref{cor: VP generalization}, the model equivalence $\MM(\mathcal{T}) = \MM(\mathcal{T}_A) = \MM(\mathcal{T}_B)$ is much more easily seen in the corresponding minimal context graphs for these two trees, which are obtained by swapping the directions of the edges between $1,2$ and $1,3$, respectively, in all minimal contexts graphs in $\GG_\mathcal{T}$.  
\end{example}

\begin{figure}[t]
    \begin{subfigure}[b]{0.45\textwidth}
    \centering
    \begin{tikzpicture}[thick,scale=0.15]
		\node[draw, fill=black!0, inner sep=2pt, rounded corners, minimum width=2pt] (w3) at (6,15)  {\scriptsize 1111};
		\node[draw, fill=black!0, inner sep=2pt, rounded corners, minimum width=2pt] (w4) at (6,13.5) {\scriptsize 1110};
		\node[draw, fill=black!0, inner sep=2pt, rounded corners, minimum width=2pt] (w5) at (6,12) {\scriptsize 1101};
		\node[draw, fill=black!0, inner sep=2pt, rounded corners, minimum width=2pt] (w6) at (6,10.5) {\scriptsize 1100};
		\node[draw, fill=black!0, inner sep=2pt, rounded corners, minimum width=2pt] (v3) at (6,9)  {\scriptsize 1011};
		\node[draw, fill=black!0, inner sep=2pt, rounded corners, minimum width=2pt] (v4) at (6,7.5) {\scriptsize 1010};
		\node[draw, fill=black!0, inner sep=2pt, rounded corners, minimum width=2pt] (v5) at (6,6) {\scriptsize 1001};
		\node[draw, fill=black!0, inner sep=2pt, rounded corners, minimum width=2pt] (v6) at (6,4.5) {\scriptsize 1000};
		\node[draw, fill=black!0, inner sep=2pt, rounded corners, minimum width=2pt] (w3i) at (6,3)  {\scriptsize 0111};
		\node[draw, fill=black!0, inner sep=2pt, rounded corners, minimum width=2pt] (w4i) at (6,1.5) {\scriptsize 0110};
		\node[draw, fill=black!0, inner sep=2pt, rounded corners, minimum width=2pt] (w5i) at (6,0) {\scriptsize 0101};
		\node[draw, fill=black!0, inner sep=2pt, rounded corners, minimum width=2pt] (w6i) at (6,-1.5) {\scriptsize 0100};
		\node[draw, fill=black!0, inner sep=2pt, rounded corners, minimum width=2pt] (v3i) at (6,-3)  {\scriptsize 0011};
		\node[draw, fill=black!0, inner sep=2pt, rounded corners, minimum width=2pt] (v4i) at (6,-4.5) {\scriptsize 0010};
		\node[draw, fill=black!0, inner sep=2pt, rounded corners, minimum width=2pt] (v5i) at (6,-6) {\scriptsize 0001};
		\node[draw, fill=black!0, inner sep=2pt, rounded corners, minimum width=2pt] (v6i) at (6,-7.5) {\scriptsize 0000};

		\node[draw, fill=blue!0, inner sep=2pt, rounded corners, minimum width=2pt] (w1) at (-2,14.25) {\scriptsize 111};
		\node[draw, fill=cyan!60, inner sep=2pt, rounded corners, minimum width=2pt] (w2) at (-2,11.25) {\scriptsize 110};
		\node[draw, fill=orange!0, inner sep=2pt, rounded corners, minimum width=2pt] (v1) at (-2,8.25) {\scriptsize 101};
		\node[draw, fill=cyan!60, inner sep=2pt, rounded corners, minimum width=2pt] (v2) at (-2,5.25) {\scriptsize 100};
		\node[draw, fill=orange!60, inner sep=2pt, rounded corners, minimum width=2pt] (w1i) at (-2,2.25) {\scriptsize 011};
		\node[draw, fill=cyan!60, inner sep=2pt, rounded corners, minimum width=2pt] (w2i) at (-2,-0.75) {\scriptsize 010};
		\node[draw, fill=orange!60, inner sep=2pt, rounded corners, minimum width=2pt] (v1i) at (-2,-3.75) {\scriptsize 001};
		\node[draw, fill=cyan!60, inner sep=2pt, rounded corners, minimum width=2pt] (v2i) at (-2,-6.75) {\scriptsize 000};

		\node[draw, fill=green!60, inner sep=2pt, rounded corners, minimum width=2pt] (w) at (-8,12.75) {\scriptsize 11};
		\node[draw, fill=green!60, inner sep=2pt, rounded corners, minimum width=2pt] (v) at (-8,6.75) {\scriptsize 10};
		\node[draw, fill=cyan!0, inner sep=2pt, rounded corners, minimum width=2pt] (wi) at (-8,0.75) {\scriptsize 01};
		\node[draw, fill=cyan!0, inner sep=2pt, rounded corners, minimum width=2pt] (vi) at (-8,-5.25) {\scriptsize 00};

		\node[draw, fill=yellow!0, inner sep=2pt, rounded corners, minimum width=2pt] (r) at (-14,9.75) {\scriptsize 1};
		\node[draw, fill=yellow!0, inner sep=2pt, rounded corners, minimum width=2pt] (ri) at (-14,-1.75) {\scriptsize 0};

		\node[draw, fill=black!0, inner sep=2pt, rounded corners, minimum width=2pt] (I) at (-20,3) {\scriptsize r};

		\draw[->]   (I) --    (r) ;
		\draw[->]   (I) --   (ri) ;

		\draw[->]   (r) --   (w) ;
		\draw[->]   (r) --   (v) ;

		\draw[->]   (w) --  (w1) ;
		\draw[->]   (w) --  (w2) ;

		\draw[->]   (w1) --   (w3) ;
		\draw[->]   (w1) --   (w4) ;
		\draw[->]   (w2) --  (w5) ;
		\draw[->]   (w2) --  (w6) ;

		\draw[->]   (v) --  (v1) ;
		\draw[->]   (v) --  (v2) ;

		\draw[->]   (v1) --  (v3) ;
		\draw[->]   (v1) --  (v4) ;
		\draw[->]   (v2) --  (v5) ;
		\draw[->]   (v2) --  (v6) ;

		\draw[->]   (ri) --   (wi) ;
		\draw[->]   (ri) -- (vi) ;

		\draw[->]   (wi) --  (w1i) ;
		\draw[->]   (wi) --  (w2i) ;

		\draw[->]   (w1i) --  (w3i) ;
		\draw[->]   (w1i) -- (w4i) ;
		\draw[->]   (w2i) --  (w5i) ;
		\draw[->]   (w2i) --  (w6i) ;

		\draw[->]   (vi) --  (v1i) ;
		\draw[->]   (vi) --  (v2i) ;

		\draw[->]   (v1i) --  (v3i) ;
		\draw[->]   (v1i) -- (v4i) ;
		\draw[->]   (v2i) -- (v5i) ;
		\draw[->]   (v2i) --  (v6i) ;

		\node at (-17.5,-9) {$X_2$} ;
		\node at (-11.5,-9) {$X_1$} ;
		\node at (-5,-9) {$X_3$} ;
		\node at (2,-9) {$X_4$} ;

	\end{tikzpicture}
	\caption{$\mathcal{T}_A = (\pi_A,\mathbf{s}_A)$.}\label{fig:cstreeA}
    \end{subfigure}
    \hfill
    \begin{subfigure}[b]{0.45\textwidth}
    \centering
    \begin{tikzpicture}[thick,scale=0.15]
		\node[draw, fill=black!0, inner sep=2pt, rounded corners, minimum width=2pt] (w3) at (6,15)  {\scriptsize 1111};
		\node[draw, fill=black!0, inner sep=2pt, rounded corners, minimum width=2pt] (w4) at (6,13.5) {\scriptsize 1110};
		\node[draw, fill=black!0, inner sep=2pt, rounded corners, minimum width=2pt] (w5) at (6,12) {\scriptsize 1101};
		\node[draw, fill=black!0, inner sep=2pt, rounded corners, minimum width=2pt] (w6) at (6,10.5) {\scriptsize 1100};
		\node[draw, fill=black!0, inner sep=2pt, rounded corners, minimum width=2pt] (v3) at (6,9)  {\scriptsize 1011};
		\node[draw, fill=black!0, inner sep=2pt, rounded corners, minimum width=2pt] (v4) at (6,7.5) {\scriptsize 1010};
		\node[draw, fill=black!0, inner sep=2pt, rounded corners, minimum width=2pt] (v5) at (6,6) {\scriptsize 1001};
		\node[draw, fill=black!0, inner sep=2pt, rounded corners, minimum width=2pt] (v6) at (6,4.5) {\scriptsize 1000};
		\node[draw, fill=black!0, inner sep=2pt, rounded corners, minimum width=2pt] (w3i) at (6,3)  {\scriptsize 0111};
		\node[draw, fill=black!0, inner sep=2pt, rounded corners, minimum width=2pt] (w4i) at (6,1.5) {\scriptsize 0110};
		\node[draw, fill=black!0, inner sep=2pt, rounded corners, minimum width=2pt] (w5i) at (6,0) {\scriptsize 0101};
		\node[draw, fill=black!0, inner sep=2pt, rounded corners, minimum width=2pt] (w6i) at (6,-1.5) {\scriptsize 0100};
		\node[draw, fill=black!0, inner sep=2pt, rounded corners, minimum width=2pt] (v3i) at (6,-3)  {\scriptsize 0011};
		\node[draw, fill=black!0, inner sep=2pt, rounded corners, minimum width=2pt] (v4i) at (6,-4.5) {\scriptsize 0010};
		\node[draw, fill=black!0, inner sep=2pt, rounded corners, minimum width=2pt] (v5i) at (6,-6) {\scriptsize 0001};
		\node[draw, fill=black!0, inner sep=2pt, rounded corners, minimum width=2pt] (v6i) at (6,-7.5) {\scriptsize 0000};

		\node[draw, fill=blue!0, inner sep=2pt, rounded corners, minimum width=2pt] (w1) at (-2,14.25) {\scriptsize 111};
		\node[draw, fill=blue!00, inner sep=2pt, rounded corners, minimum width=2pt] (w2) at (-2,11.25) {\scriptsize 110};
		\node[draw, fill=orange!60, inner sep=2pt, rounded corners, minimum width=2pt] (v1) at (-2,8.25) {\scriptsize 101};
		\node[draw, fill=orange!60, inner sep=2pt, rounded corners, minimum width=2pt] (v2) at (-2,5.25) {\scriptsize 100};
		\node[draw, fill=cyan!60, inner sep=2pt, rounded corners, minimum width=2pt] (w1i) at (-2,2.25) {\scriptsize 011};
		\node[draw, fill=cyan!60, inner sep=2pt, rounded corners, minimum width=2pt] (w2i) at (-2,-0.75) {\scriptsize 010};
		\node[draw, fill=cyan!60, inner sep=2pt, rounded corners, minimum width=2pt] (v1i) at (-2,-3.75) {\scriptsize 001};
		\node[draw, fill=cyan!60, inner sep=2pt, rounded corners, minimum width=2pt] (v2i) at (-2,-6.75) {\scriptsize 000};

		\node[draw, fill=green!60, inner sep=2pt, rounded corners, minimum width=2pt] (w) at (-8,12.75) {\scriptsize 11};
		\node[draw, fill=cyan!0, inner sep=2pt, rounded corners, minimum width=2pt] (v) at (-8,6.75) {\scriptsize 10};
		\node[draw, fill=green!60, inner sep=2pt, rounded corners, minimum width=2pt] (wi) at (-8,0.75) {\scriptsize 01};
		\node[draw, fill=cyan!0, inner sep=2pt, rounded corners, minimum width=2pt] (vi) at (-8,-5.25) {\scriptsize 00};

		\node[draw, fill=yellow!0, inner sep=2pt, rounded corners, minimum width=2pt] (r) at (-14,9.75) {\scriptsize 1};
		\node[draw, fill=yellow!0, inner sep=2pt, rounded corners, minimum width=2pt] (ri) at (-14,-1.75) {\scriptsize 0};

		\node[draw, fill=black!0, inner sep=2pt, rounded corners, minimum width=2pt] (I) at (-20,3) {\scriptsize r};

		\draw[->]   (I) --    (r) ;
		\draw[->]   (I) --   (ri) ;

		\draw[->]   (r) --   (w) ;
		\draw[->]   (r) --   (v) ;

		\draw[->]   (w) --  (w1) ;
		\draw[->]   (w) --  (w2) ;

		\draw[->]   (w1) --   (w3) ;
		\draw[->]   (w1) --   (w4) ;
		\draw[->]   (w2) --  (w5) ;
		\draw[->]   (w2) --  (w6) ;

		\draw[->]   (v) --  (v1) ;
		\draw[->]   (v) --  (v2) ;

		\draw[->]   (v1) --  (v3) ;
		\draw[->]   (v1) --  (v4) ;
		\draw[->]   (v2) --  (v5) ;
		\draw[->]   (v2) --  (v6) ;

		\draw[->]   (ri) --   (wi) ;
		\draw[->]   (ri) -- (vi) ;

		\draw[->]   (wi) --  (w1i) ;
		\draw[->]   (wi) --  (w2i) ;

		\draw[->]   (w1i) --  (w3i) ;
		\draw[->]   (w1i) -- (w4i) ;
		\draw[->]   (w2i) --  (w5i) ;
		\draw[->]   (w2i) --  (w6i) ;

		\draw[->]   (vi) --  (v1i) ;
		\draw[->]   (vi) --  (v2i) ;

		\draw[->]   (v1i) --  (v3i) ;
		\draw[->]   (v1i) -- (v4i) ;
		\draw[->]   (v2i) -- (v5i) ;
		\draw[->]   (v2i) --  (v6i) ;

		\node at (-17.5,-9) {$X_3$} ;
		\node at (-11.5,-9) {$X_2$} ;
		\node at (-5,-9) {$X_1$} ;
		\node at (2,-9) {$X_4$} ;

	\end{tikzpicture}
	\caption{$\mathcal{T}_B = (\pi_B,\mathbf{s}_B)$.}\label{fig:cstreeB}
    \end{subfigure}
    
\caption{The staged tree representation of two CStrees that are Markov equivalent to $\mathcal{T}$ from Figure~\ref{fig:cstree}.  The staged tree representation explicitly shows the models satisfy the CStree factorization~\eqref{eqn:CStreefactorization}. The minimal context graphs, described in Example~\ref{ex: equivalent CStrees} show model equivalence much more easily via Corollary~\ref{cor: VP generalization}.}
\label{fig: equivalent CStrees}
\end{figure}

\section{Interventional CStrees}
\label{sec:interventionalcstrees}
We now consider a context-specific generalization of the interventional DAG model $\MM(\GG, \ci)$ for a DAG $\GG = ([p], E)$ and set of intervention targets $\ci = (I_0= \emptyset, I_1,\ldots, I_K)$, as defined in~\eqref{eqn:I-DAGmodel}. 
To graphically represent the model $\MM(\GG, \ci)$, \citet{YKU18} introduced the $\ci$-DAG $\GG^\ci = ([p]\cup W_\ci, E_\ci)$, where
\[
W_\ci = \{\omega_k : k\in[K]\} \qquad \mbox{and} \qquad E_\ci = \{\omega_k \rightarrow i : i \in I_k, \mbox{ for all } k\in[K]\}.
\]
They then generalized the characterization of Markov equivalence for DAG models $\MM(\GG)$ of \citet{VP90} as follows. 

\begin{theorem}\cite[Theorem~3.9]{YKU18}
    \label{thm: Yang characterization}
    $\MM(\GG,\ci) = \MM(\HH, \ci)$ if and only if $\GG^\ci$ and $\HH^\ci$ have the same skeleton and v-structures. 
\end{theorem}

Provided with our generalization of the characterization of \citet{VP90} to CStree models $\MM(\mathcal{T})$ in Corollary~\ref{cor: VP generalization}, our goal in this section is to extend our result so as to obtain a generalization of Theorem~\ref{thm: Yang characterization} to general, context-specific interventions in CStrees. 

To accomplish this goal, we will first recall the global $\ci$-Markov property \citet{YKU18} used to obtain the result in Theorem~\ref{thm: Yang characterization}. 
We then define $\ci$-CStrees and make precise the notion of a general, context-specific intervention.  
This allows us to extend our global Markov property of CStrees in Definition~\ref{def: global MP} to a global $\ci$-Markov property of CStrees. 
From there we may obtain the desired result. 
The global $\ci$-Markov property for DAGs is the following.

\begin{definition}
\label{def: I-Markov property}
Let $\ci = (I_0 = \emptyset, I_1,\ldots,I_K)$ be a sequence of intervention targets.  
Let $(\vX^{I})_{I\in\ci}$ be a set of (strictly positive) distributions.  
Then $(\vX^{I})_{I\in\ci}$ satisfies the \emph{$\ci$-Markov property} with respect to $\GG$ and $\ci$ if 
\begin{enumerate}
	\item $\vX_A \independent \vX_B \mid \vX_C$ for any $I\in \ci$ and any disjoint $A,B,C\subseteq[p]$ for which $C$ d-separates $A$ and $B$ in $\GG$.  
	\item $P^{I}(\vX_A \mid \vX_C) = P^{\emptyset}(\vX_A \mid \vX_C)$ for any $I\in \ci$ and any disjoint $A,C\subseteq [p]$ for which $C\cup W_{\ci}\setminus\omega_I$ d-separates $A$ and $w_I$ in $\GG^\ci$. 
\end{enumerate}
\end{definition}

\subsection{Interventional CStrees}
\label{subsec: interventional staged tree models and CStrees}
General interventions in DAG models, as defined in~\eqref{eqn:I-DAGfactorization}, amount to replacing the conditional factor $P(x_i \mid \vx_{\pa_\GG(i)})$ in~\eqref{eqn:DAGfactorization} with an new conditional distribution $P^{I}(x_i \mid \vx_{\pa_\GG(i)})$ for all $i$ in the intervention target $I$, for all outcomes $\vx_{\pa_\GG(i)}\in\mathcal{X}_{\pa_\GG(i)}$. 
In the context-specific generalization~\eqref{eqn:CStreefactorization} of the DAG factorization~\eqref{eqn:DAGfactorization}, the key difference is that the conditional factors $P(x_i | \vx_S)$ may have different sets $S$, specifying more diverse, context-specific relations; namely, these conditional factors need not satisfy $S = P_i$ for a subset $P_i$ indexed by $i$. 

As discussed in Remark~\ref{rem: why not general LDAGs}, it naturally follows that, for CStree models, an intervention may only perturb the conditional factors $P(x_i | \vx_S)$ for specific stages $\mathcal{S}_{\pi,i}(\vx_S)\in \mathbf{s}_i$; e.g., for specific choices of stage-defining contexts $\vx_S$. 
In this regard, we may define a \emph{context-specific intervention target} to be a subset $I$ of the stages $\mathbf{s}$ in the CStree $\mathcal{T} = (\pi, \mathbf{s})$.
An \emph{interventional distribution} $\vX^{I}$ for $I\subseteq\mathbf{s}$ and $\vX\in \mathcal{M}(\mathcal{G})$ is a distribution having probability mass function satisfying 
\begin{equation}
    \label{eqn:I-CStreefactorization}
    P^I(\vx) = \prod_{i\, :\,  \vx_{\pa_{\mathcal{T}}(\vx_{\pi_1:\pi_{i-1}})}\in I}P^I(x_i \mid \vx_{\pa_{\mathcal{T}}(\vx_{\pi_1:\pi_{i-1}})})\prod_{i \, : \, \vx_{\pa_{\mathcal{T}}(\vx_{\pi_1:\pi_{i-1}})}\notin I}P(x_i \mid \vx_{\pa_{\mathcal{T}}(\vx_{\pi_1:\pi_{i-1}})}) 
\end{equation}
for all outcomes $\vx = (x_1,\ldots, x_p)$.
For notational convenience, we typically denote the elements $\mathcal{S}_{\pi,i}(\vx_S)$ of the target $I$ simply by their stage-defining contexts $\vx_S$, as in~\eqref{eqn:I-CStreefactorization}. 
Given a CStree $\mathcal{T} = (\pi, \mathbf{s})$ and sequence of intervention targets $\ci = (I_0 = \emptyset, I_1,\ldots, I_K)$, we may then define the \emph{interventional CStree model}
\begin{equation}
    \label{eqn:I-CStreemodel}
    \begin{split}
    \MM(\mathcal{T},\ci) = \{(\vX^0,\ldots, \vX^K) :\, &\mbox{for all $k\in\{0,\ldots, K\}$, } \vX^k\in \MM(\mathcal{T}) \mbox{ and for all outcomes $\vx$}\\
    &\mbox{  and stages $\mathcal{S}_{\pi,i}(\vx_S)\in\mathbf{s}$, } \\
    &P^{I_k}(x_i \mid \vx_S) = P^{I_{k^\prime}}(x_i \mid \vx_S) \mbox{ whenever $\vx_S\notin I_k\cup I_{k^\prime}$}\}. \\
    \end{split}
\end{equation}

Similar to the CStree models $\MM(\mathcal{T})$ in Subsection~\ref{subsec:cstrees}, one can graphically represent the interventional CStree model $\MM(\mathcal{T},\ci)$ using a staged tree. 
In particular, for each $I_k\in \ci$, we take a copy $\mathcal{T}^k$ of the staged tree representation for the model $\MM(\mathcal{T})$ and denote its root node by $r^k$. 
We then connect these graphs to an additional root node $r^\ci$ with edges $r^\ci\rightarrow r^k$ for all $k \in\{0,\ldots, K\}$. 
For convenience, we often write the elements of the set $I_k$ along the edge $r^\ci\rightarrow r^k$. 
The resulting colored tree is denoted $\mathcal{T}^\ci$.
Note that, by the invariances defining the model $\MM(\mathcal{T},\ci)$, the stages $\mathcal{S}_{\pi,i}(\vx_S)$ in $\mathcal{T}^k$ and its corresponding copy in the tree $\mathcal{T}^{k^\prime}$ will be the same color whenever $\vx_S\notin I_k\cup I_{k^\prime}$. 
In particular, $\mathcal{T}^\ci$ is a staged tree in which any stage not targeted for intervention in $I_k$ or $I_{k^\prime}$ are unioned into a single stage. 
This union graphically captures the context-specific invariances defining $\MM(\mathcal{T},\ci)$. 
Moreover, it reveals that the model $\MM(\mathcal{T},\ci)$ is always an \emph{interventional staged tree model} as defined by \citet{DS20}.
As these trees are cumbersome to draw, for explicit examples, we refer the reader to \citep{DS20} and to Appendix~\ref{appsec: additional results} where interventional staged tree representations for the real data analysis in Section~\ref{sec:real examples} are presented.


For modeling purposes, it is preferable to have a more compact graphical representation of the model $\MM(\mathcal{T},\ci)$ than its staged tree representation. 
Since our characterization of Markov equivalence for $\MM(\mathcal{T})$ uses the minimal context DAG representation introduced in Definition~\ref{def: minimal context graphs}, this is the natural candidate for generalization to a graphical characterization of $\ci$-Markov equivalence (as Theorem~\ref{thm: Yang characterization} does for Verma and Pearl).
To do so, we first extract a global $\ci$-Markov property for CStrees, generalizing Definition~\ref{def: I-Markov property}. 
This will rely on the following lemma.

\begin{lemma}
\label{lem: characterizing interventional settings}
Let $\mathcal{T}$ be a CStree and $\ci$ a sequence of targets. 
Then $(\vX^{I})_{I\in\ci}\in\MM(\mathcal{T}, \ci)$ if and only if there exists $\vX^{0}\in\MM(\mathcal{T})$ such that $\vX^{I}$ factorizes as in~\eqref{eqn:I-CStreefactorization} with respect to $\vX^{0}$ for all $I\in\ci$.  
\end{lemma}

\subsection{$\ci$-Markov properties of CStrees}
We first construct a generalization of minimal context graphs for $\mathcal{T}$ that include interventions. 
This is done in analogy to the construction of the $\ci$-DAG $\GG^\ci$. 
By Lemma~\ref{lem: subcontexts}, every stage-defining context $\vx_S$ contains (at least one) minimal context $\vx_M\in\mathcal{C}_\mathcal{T}$ as a \emph{subcontext}; e.g., $M\subseteq S$ and $\vx_{S}$ restricted to the indices in $M$ is equal to $\vx_M$. 

Let $\GG_{\vx_M}$ be a minimal context graph of $\mathcal{T}$, and $\ci = (I_0,\ldots, I_K)$ a sequence of intervention targets. 
For the target $I_k\subseteq \mathbf{s}$, let 
\[
I_{k,\vx_M} = \{\vx_S : \vx_S\in I_k \mbox{ and } \vx_M \mbox{ is a subcontext of } \vx_S\}.
\]
The set $I_{k,\vx_M}$ isolates the elements of the intervention target acting in the minimal context $\vx_M$. 
We may then define the minimal context $\ci$-graph $\GG^\ci_{\vx_M}$ with node set
\[
W_{\vx_M}^\ci = \{\omega_k : k\in [K]\} \cup [p]\setminus M
\]
and edge set
\[
E_{\vx_M}^\ci = \{\omega_{k}\rightarrow \pi_i : \pi_i\in[p]\setminus M \mbox{ and } \vx_S\in \mathbf{s}_i\cap I_{k,\vx_M}\}.
\]
That is, $\GG^\ci_{\vx_M}$ is the DAG $\GG_{\vx_M}$ with an additional node $\omega_k$ for each nonempty intervention target $I_k$, with edges pointing from $\omega_k$ to the variables whose conditional distributions $P(x_{\pi_i} \mid \vx_S)$ are augmented by $I_k$. 
The minimal context graphs for the interventional CStree $\mathcal{T}^\ci$ are $\{\GG^\ci_{\vx_M}\}_{\vx_M\in\mathcal{C}_\mathcal{T}}$.

\begin{example}
    \label{ex: minimal context I-graphs}
    Consider the context-specific interventions $I_1 = \{\mathcal{S}_{\pi,2}(x_1 = 0)\}$ and $I_2 = \{\mathcal{S}_{\pi,3}(x_2 = 1)\}$ corresponding to the context-specific mechanism changes $P^{I_1}(X_2 \mid X_1 = 0)$ and $P^{I_2}(X_3 \mid X_2 = 1)$ in the chicken pox model from Example~\ref{ex:CSI causal model}.
    Let $\mathcal{T}$ be the CStree for this model constructed in Example~\ref{ex: cstree} as depicted in Figure~\ref{fig:CStreeExample}.
    The CStree $\mathcal{T}$ was shown to have the minimal context graphs $\GG_\mathcal{T}$ depicted in Figure~\ref{fig:minimal context graphs}. 
    It can be checked that the minimal context $\ci$-DAGs for this model are those in Figure~\ref{fig:minimal context I-graphs}. 
    By construction, all interventions are depicted in the minimal context graph $\GG_{\vx_\emptyset}$, as the empty context is always a subcontext of any $\vx_S\in I$.
    Comparing the $\ci$-graphs $\GG_{x_2 = 0}^\ci$ and $\GG_{x_2 = 1}^\ci$, we see that the interventional edge $\omega_2\rightarrow 3$ in $\GG_{\vx_\emptyset}$ actually corresponds to the perturbance of the factor $P^{I_0}(X_3 \mid X_2 = 1)$, while the factor $P^{I_2}(X_3 \mid X_2 = 0) = P^{I_0}(X_3 \mid X_2 = 0)$ remains invariant. 

    On the other hand, the edge $\omega_1 \rightarrow 2$ only occurs in the minimal context graph $\GG_{\vx_\emptyset}$. 
    Hence, without further assumptions or information, we can only deduce from the minimal context $\ci$-DAGs $\GG_{\mathcal{T}}^\ci$ that at least one of the mechanisms $P^{I_0}(X_2 \mid X_1 = 0)$ and $P^{I_0}(X_2 \mid X_1 = 1)$ is perturbed. 
    Specifically, this graphical representation obscures the invarance $P^{I_1}(X_2 \mid X_1 = 1) = P^{I_0}(X_2 \mid X_1 = 1)$. 
    In the following, we address this obfuscation so that we can make proper use of $\GG_\mathcal{T}^\ci$ as a compact representation of context-specific, general interventional models.
\end{example}

\begin{figure}[t]
    \centering
 \begin{tabular}{| c | c | c | c |}\hline
    \begin{tikzpicture}[thick,scale=0.25]
		\node[circle, draw, fill=black!0, inner sep=1pt, minimum width=1pt] (1) at (3.25,8) {\large$1$};
		\node[circle, draw, fill=black!0, inner sep=1pt, minimum width=1pt] (2) at (-2.25,4) {\large$2$};
		\node[circle, draw, fill=black!0, inner sep=1pt, minimum width=1pt] (3) at (8.25,4) {\large$3$};
		\node[circle, draw, fill=black!0, inner sep=1pt, minimum width=1pt] (4) at (3.25,0) {\large$4$};

            \node[circle, fill=black!0, inner sep=1pt, minimum width=1pt] (i1) at (-1.5,0.5) {$\omega_1$} ; 
            \node[circle, fill=black!0, inner sep=1pt, minimum width=1pt] (i2) at (7.5,0.5) {$\omega_2$} ; 

		\draw[->]   (1) -- (2) ;
		\draw[->]   (1) -- (3) ;
		\draw[->]   (1) -- (4) ;
		\draw[->]   (2) -- (3) ;
		\draw[->]   (2) -- (4) ;
		\draw[->]   (3) -- (4) ;

            \draw[->]   (i1) -- (2) ;
            \draw[->]   (i2) -- (3) ;

            \node at (-2.25, 8) {$\GG_{\vx_\emptyset}$} ;
	\end{tikzpicture}
 &
 \begin{tikzpicture}[thick,scale=0.25]
		\node[circle, draw, fill=black!0, inner sep=1pt, minimum width=1pt] (1) at (3.25,8) {\large$1$};
		\node[circle, draw, fill=black!0, inner sep=1pt, minimum width=1pt] (3) at (8.25,4) {\large$3$};
		\node[circle, draw, fill=black!0, inner sep=1pt, minimum width=1pt] (4) at (3.25,0) {\large$4$};

            \node[circle, fill=black!0, inner sep=1pt, minimum width=1pt] (i1) at (-1.5,0.5) {$\omega_1$} ; 
            \node[circle, fill=black!0, inner sep=1pt, minimum width=1pt] (i2) at (7.5,0.5) {$\omega_2$} ; 

		\draw[->]   (1) -- (3) ;
		\draw[->]   (3) -- (4) ;

            \node at (-2.25, 8) {$\GG_{x_2 = 0}$} ;
    \end{tikzpicture}
 &
 \begin{tikzpicture}[thick,scale=0.25]
		\node[circle, draw, fill=black!0, inner sep=1pt, minimum width=1pt] (1) at (3.25,8) {\large$1$};
		\node[circle, draw, fill=black!0, inner sep=1pt, minimum width=1pt] (3) at (8.25,4) {\large$3$};
		\node[circle, draw, fill=black!0, inner sep=1pt, minimum width=1pt] (4) at (3.25,0) {\large$4$};

            \node[circle, fill=black!0, inner sep=1pt, minimum width=1pt] (i1) at (-1.5,0.5) {$\omega_1$} ; 
            \node[circle, fill=black!0, inner sep=1pt, minimum width=1pt] (i2) at (7.5,0.5) {$\omega_2$} ; 

		\draw[->]   (1) -- (4) ;
		\draw[->]   (3) -- (4) ;

		\draw[->]   (i2) -- (3) ;

            \node at (-2.25, 8) {$\GG_{x_2 = 1}$} ;
    \end{tikzpicture}
 & 
 \begin{tikzpicture}[thick,scale=0.25]
		\node[circle, draw, fill=black!0, inner sep=1pt, minimum width=1pt] (1) at (3.25,8) {\large$1$};
		\node[circle, draw, fill=black!0, inner sep=1pt, minimum width=1pt] (2) at (-2.25,4) {\large$2$};
            \node at (8.25,4) {\,} ;
		\node[circle, draw, fill=black!0, inner sep=1pt, minimum width=1pt] (4) at (3.25,0) {\large$4$};

            \node[circle, fill=black!0, inner sep=1pt, minimum width=1pt] (i1) at (-1.5,0.5) {$\omega_1$} ; 
            \node[circle, fill=black!0, inner sep=1pt, minimum width=1pt] (i2) at (7.5,0.5) {$\omega_2$} ; 

		\draw[->]   (1) -- (2) ;

            \node at (-2.25, 8) {$\GG_{x_3 = 0}$} ;
    \end{tikzpicture}
 \\\hline
 \end{tabular}
\caption{The minimal context graphs of the CStree in Figure~\ref{fig:cstree}.}
\label{fig:minimal context I-graphs}
\end{figure}

Using the minimal context graphs $\GG_\mathcal{T}^\ci$, we may define the following context-specific $\ci$-Markov property generalizing Definition~\ref{def: I-Markov property}.

\begin{definition}
\label{def: context-specific I-Markov property}
Let $\mathcal{T}^\ci$ be an interventional CStree where $\ci = (I_0 = \emptyset, I_1,\ldots, I_K)$.
Suppose that $(\vX^{I})_{I\in\ci}$ is a sequence of strictly positive distributions.  
We say that $(\vX^{I})_{I\in\ci}$ satisfies the \emph{context-specific $\ci$-Markov property} with respect to $\mathcal{T}^\ci$ if for any $\vx_M\in\mathcal{C}_\mathcal{T}$:
\begin{enumerate}
	\item $\vX_A\independent \vX_B \mid \vX_C, \vX_M = \vx_M$ in $\vX^{I}$ for any $I\in\ci$ and any disjoint $A,B,C\subseteq[p]\setminus M$ whenever $A$ and $B$ are d-separated given $C$ in $\GG_{\vx_M}$, and
	\item $P^{I_k}(\vX_A \mid \vX_C, \vX_M = \vx_M) = P^{I_0}(\vX_A \mid \vX_C, \vX_M = \vx_M)$ for any $I_k\in\ci$ and any disjoint $A,C\subseteq[p]\setminus M$ for which $C\cup W_{\ci}\setminus\{\omega_k\}$ d-separates $A$ and $\omega_k$ in $\GG_{\vx_M}^\ci$. 
\end{enumerate}
We let $\MM(\GG_{\mathcal{T}}^\ci)$ denote the collection of all $(\vX^{I})_{I\in\ci}$ satisfying the context-specific $\ci$-Markov property with respect to $\GG_{\mathcal{T}}^\ci$.
\end{definition}

In analogy to Theorem~\ref{thm: markov to context graphs}, we would like to observe that $\MM(\mathcal{T},\ci) = \MM(\GG_{\mathcal{T}}^\ci)$. 
However, due to the fact that our interventions are context-specific, e.g., augmenting factors $P(x_i \mid \vx_S)$ for specific outcomes $\vx_S$, there is a subtlety to ensuring that $\mathcal{T}^\ci$ and $\mathcal{G}_{\mathcal{T}}^\ci$ indeed represent the same sequences of distributions, as noted in Example~\ref{ex: minimal context I-graphs}.

More precisely, in DAG models, the intervention $i\in I_k$ indicates that the conditional factors $P(x_i \mid \vx_{\pa_\GG(i)})$ are augmented for all outcomes $\vx_{\pa_\GG(i)}$, and this is captured by the single arrow $\omega_k\rightarrow i$ in $\GG^\ci$. 
However, as noted in Section~\ref{subsec:cstrees}, each outcome $\vx_{\pa_\GG(i)}$ corresponds to a different stage in the CStree interpretation of $\MM(\GG)$. 
Hence, our context-specific interventions, which target individual stages in the CStree, need not target all outcomes $\vx_{\pa_\GG(i)}$. 
In this case, the arrow $\omega_k \rightarrow i$ in the $\ci$-DAG $\GG^\ci$ would not capture the invariances $P^{I_k}(x_i \mid \vx_{\pa_\GG(i)}) = P(x_i \mid \vx_{\pa_\GG(i)})$ for the outcomes $\vx_{\pa_\GG(i)}$ not targeted by the intervention.

To accommodate for this subtlety in invariances at the context-specific level, we may impose the following assumption on the sequence of intervention targets $\ci$.

\begin{assumption}
    \label{ass:complete interventions}
    Let $\mathcal{T} = (\pi,\mathbf{s})$ be a CStree, and $I_k\subseteq \mathbf{s}$ an intervention target. 
    The target $I_k$ is \emph{complete with respect to $\vx_M\in\mathcal{C}_\mathcal{T}$} if whenever $\vx_S\in \mathbf{s}_i\cap I_{k,\vx_M}$ for a minimal context $\vx_M$ of $\mathcal{T}$ then for all $\mathcal{S}_{\pi,i}(\vx^\prime_{S^\prime})\in \mathbf{s}_i$ for which $\vx_M$ is a subcontext of $\vx^\prime_{S^\prime}$, we have that $\vx^\prime_{S^\prime}\in \mathbf{s}_i\cap I_{k,\vx_M}$.
\end{assumption}

\begin{remark}
\label{rem: DAG interventions are complete}
We note that that an intervention target $I_k$ in a DAG model $\MM(\GG)$ is always complete. 
If $\mathcal{T} = (\pi, \mathbf{s})$ is a CStree representation of $\MM(\GG)$ then $\mathcal{C}_\mathcal{T} = \{\vx_\emptyset\}$ and $\mathbf{s}_i = \{\mathcal{S}_{\pi,i}(\vx_{\pa_\GG(\pi_i)}) : \vx_{\pa_\GG(\pi_i)}\in\mathcal{X}_{\pa_\GG(\pi_i)}\}$. 
Recall that $\vx_\emptyset$ is a subcontext of any context, and let $I_k^\mathcal{T}\subset \mathbf{s}$ denote the corresponding intervention target in the CStree representation of $\MM(\GG)$. 
Then by~\eqref{eqn:I-DAGfactorization}, we have that $I^\mathcal{T}_{k,\vx_{\emptyset}} = \{\vx_{\pa_\GG(\pi_i)} : \pi_i \in I_k \mbox{ and } \vx_{\pa_\GG(\pi_i)}\in \mathcal{X}_{\pa_\GG(\pi_i)}\}$.
Hence, $\mathbf{s}_i \cap I^\mathcal{T}_{k,\vx_{\emptyset}} = \mathbf{s}_i$, showing that $I_k^\mathcal{T}$ is complete.
\end{remark}

In the DAG setting, $\GG_{\vx_M}^\ci = \GG_{\vx_\emptyset}^\ci = \GG^\ci$ is the only minimal context $\ci$-DAG.
So Remark~\ref{rem: DAG interventions are complete} shows that complete intervention targets are a context-specific generalization of intervention targets in DAG models for which the arrow $\omega_k \rightarrow \pi_i$ in the DAG $\GG_{\vx_M}^\ci$ does not hide any context-specific invariances.
In particular, by requiring an intervention to be complete with respect to a certain minimal context $\vx_M$, we intervene as we would in normal DAG models (e.g., targeting all conditional factors for a single node), but only in the local DAG model $P(\vX_{[p]\setminus M} \mid \vX_M = \vx_M)$ for $\vx_M$ in Theorem~\ref{thm: markov to context graphs}~(3). 
Hence, complete interventions with respect to $\vx_M$ should be representable with the $\ci$-DAG $\GG_{\vx_M}^\ci$, in the same fashion as $\GG^\ci$ for non-context-specific interventions. 
However, when this minimal context is empty, a complete intervention will target all stages in level $\mathbf{s}_i$, thereby obscuring any context-specific targeting in nonempty $\vx_M$. 
To avoid this obfuscation, we limit when interventions are complete with respect to the empty context $\vx_\emptyset$. 

\begin{definition}
    \label{def: CS-complete}
    An intervention target $I\subseteq \mathbf{s}$ is \emph{context-specific complete}, or \emph{CS-complete} for short, if $I$ is complete with respect to all minimal contexts in $\mathcal{C}_\mathcal{T}\setminus \vx_\emptyset$, 
    and whenever $I$ contains  $\vx_S$ whose only subcontext in $\mathcal{C}_\mathcal{T}$ is $\vx_\emptyset$ then $\mathbf{s}_i\in I$.
\end{definition}

\begin{example}
    \label{ex: CS complete}
    Consider the soft interventions from the chicken pox model in Example~\ref{ex:CSI causal model}. 
    These, respectively, are the mechanism changes $P^{I_1}(X_2 \mid X_1 = 0)$ and $P^{I_2}(X_3 \mid X_2 = 1)$. 
    The intervention $I_1 = \{\mathcal{S}_{\pi,2}(x_1 = 0)\}$ is not CS-complete since $\mathcal{S}_{\pi,2}(x_1 = 0)$ is defined by $x_1 = 0$ whose only minimal subcontext is $\vx_\emptyset$, but we have not intervened on all stages in level $2$. 
    To make $I_1$ CS-complete, we can add in the other stage $\mathcal{S}_{\pi,2}(x_1 = 1) = \{x_1 = 1\}$. 
    Note that taking $I_1 = \{\mathcal{S}_{\pi,2}(x_1 = 0), \mathcal{S}_{\pi,2}(x_1 = 1)\}$ relaxes our assumptions on the mechanisms that may be perturbed.  
    However, if only one mechanism is perturbed by the experiment, this will become apparent in parameter fitting. 

    On the other hand, the intervention $I_2 = \{\mathcal{S}_{\pi,3}(x_2 = 1)\}$ perturbing the mechanism $P^{I_2}(X_3 \mid X_2 = 1)$ is indeed CS-complete.  This is because the stage-defining context $x_2 = 1$ contains the minimal context $x_2 = 1$ and no other.  Since there is no other stage $\mathcal{S}_{\pi,3}(\vx_{S^\prime}^\prime)$ for which $\vx_{S^\prime}^\prime$ has $x_2 = 1$ as a subcontext, $I_2$ is CS-complete. Thus the sequence of intervention targets
    \[
    \ci = \{I_0 = \emptyset, I_1 =\{\mathcal{S}_{\pi,2}(x_1 = 0), \mathcal{S}_{\pi,2}(x_1 = 1)\}, I_2 = \{\mathcal{S}_{\pi,3}(x_2 = 1)\}\}
    \]
    is CS-complete.
\end{example}

Note that interventions in DAG models are CS-complete. 
For CS-complete intervention targets we obtain the following.

\begin{theorem}
    \label{prop: interventional global markov}
    Let $\mathcal{T}$ be a CStree and $\ci = (I_0 = \emptyset, I_1,\ldots, I_K)$ a sequence of CS-complete intervention targets. 
    Then $(\vX^{I_k})_{k = 0}^K\in \MM(\mathcal{T},\ci)$ if and only if $(\vX^{I_k})_{k = 0}^K$ satisfies the context-specific $\ci$-Markov property with respect to $\mathcal{T}^\ci$; i.e., $\MM(\mathcal{T},\ci) = \MM(\GG_{\mathcal{T}}^\ci)$.
\end{theorem}

Theorem~\ref{prop: interventional global markov} extends Theorem~\ref{thm: markov to context graphs} to CS-complete, general, context-specific interventions. 
It also generalizes \citep[Proposition~3.8]{YKU18} to context-specific models.

\subsection{Markov equivalence of interventional CStrees}
\label{subsec: statistical equivalence of interventional CStrees}
We may now use Theorem~\ref{prop: interventional global markov} to give a combinatorial characterization of when two interventional CStrees $\mathcal{T}^\ci$ and $\widetilde{\mathcal{T}}^{\widetilde{\ci}}$ under CS-complete interventions are \emph{Markov equivalent}; i.e., satisfy $\MM(\mathcal{T},\ci) = \MM(\widetilde{\mathcal{T}},\widetilde{\ci})$.

\begin{remark}
    \label{rem: distinct interventions}
Note that, unlike DAGs, the intervention targets $\ci$ and $\widetilde{\ci}$ may be distinct.  
This is because they are context-specific, targeting the stages defining their respective CStrees, and hence sensitive to the model factorization~\eqref{eqn:CStreefactorization}. 
In particular, the stages in $I_k\in \ci$ need not be contained in any $\widetilde{I}_k\in\widetilde{\ci}$. 
(For DAGs, an intervention target $I_k$ only considers the content on the left-hand-side of the conditioning bar in $P(x_i \mid \vx_{\pa_\GG(i)})$, whereas in context-specific interventions the targets also consider the context on the right-hand-side.)
However, when the models are equivalent, they will be in bijection in the following sense.
\end{remark}

\begin{definition}
\label{def: compatible targets}
Let $\mathcal{T}$ and  $\widetilde{\mathcal{T}}$ be two CStrees with the same set of minimal contexts $\mathcal{C}$.  
If $\mathcal{T}$ has collection of targets $\ci$ and $\widetilde{\mathcal{T}}$ has collection of targets $\widetilde{\ci}$, we say that $\ci$ and $\widetilde{\ci}$ are \emph{compatible} if there exists a bijection $\Phi: \ci\longrightarrow\widetilde{\ci}$ such that for all $\vx_M\in\mathcal{C}$
\[
\omega_I\rightarrow k\in\GG_{\vx_M}^\ci
\qquad 
\Longleftrightarrow 
\qquad
\omega_{\Phi(I)}\rightarrow k\in\GG_{\vx_M}^{\widetilde{\ci}}.
\]
\end{definition}

Provided with a compatible pair of intervention targets, the notion of two $\ci$-DAGs having the same skeleton and v-structures extends in the expected way to minimal context $\ci$-DAGs. 
This is made concrete with the following definition. 

\begin{definition}
\label{def: skeleta and v-structures}
Let $\mathcal{T},\widetilde{\mathcal{T}}$ be two CStrees with targets $\ci,\widetilde{\ci}$, respectively, and the same set of minimal contexts $\mathcal{C}$.   
We say the sequences of minimal context graphs $\GG_{\mathcal{T}}^\ci$ and $\GG_{\widetilde{\mathcal{T}}}^{\widetilde{\ci}}$ \emph{have the same skeleton and v-structures} if:
\begin{enumerate}
    \item $\ci$ and $\widetilde{\ci}$ are compatible,
    \item $\GG_{\vx_M}\in\GG_{\mathcal{T}}$ and $\widetilde{\GG}_{\vx_M}\in\GG_{\widetilde{\mathcal{T}}}$ have the same skeleton for all $\vx_M\in\mathcal{C}$,
    \item $\GG_{\vx_M}\in\GG_{\mathcal{T}}$ and $\widetilde{\GG}_{\vx_M}\in\GG_{\widetilde{\mathcal{T}}}$ have the same v-structures for all $\vx_M\in\mathcal{C}$,
    \item $w_I\rightarrow k \leftarrow j$ is a v-structure in $\GG_{\vx_M}^\ci$ if and only if $w_\Phi(I)\rightarrow k \leftarrow j$ is a v-structure in $\widetilde{\GG}_{\vx_M}^{\widetilde{\ci}}$ for all $\vx_M\in\mathcal{C}$.
\end{enumerate}
\end{definition}

We then obtain the following theorem, which is a common generalization of Corollary~\ref{cor: VP generalization} and Theorem~\ref{thm: Yang characterization}.

\begin{theorem}
\label{thm: interventional equivalence characterization}
Let $\mathcal{T}^\ci$ and $\widetilde{\mathcal{T}}^{\widetilde{\ci}}$ be interventional CStrees with $\ci$ and $\widetilde{\ci}$ CS-complete and $\emptyset\in\ci\cap\widetilde{\ci}$.  
Then $\mathcal{T}^\ci$ and $\widetilde{\mathcal{T}}^{\widetilde{\ci}}$ are Markov equivalent if and only if $\GG_\mathcal{T}^\ci$ and $\GG_{\widetilde{\mathcal{T}}}^{\widetilde{\ci}}$ have the same skeleton and v-structures.
\end{theorem}

\begin{example}
    \label{ex: I-CStree equivalence}
    Recall from Example~\ref{ex: equivalent CStrees} that the two CStrees $\mathcal{T}_A$ and $\mathcal{T}_B$ depicted in Figure~\ref{fig: equivalent CStrees} are Markov equivalent to the CStree $\mathcal{T}$ from Figure~\ref{fig:cstree} representing the chicken pox model from Example~\ref{ex:CSI causal model}. 
    In Example~\ref{ex: CS complete}, we gave a sequence $\ci$ of CS-complete intervention targets that encode the context-specific mechanism changes from Example~\ref{ex:CSI causal model}. 
    The resulting interventional CStree $\mathcal{T}^\ci$ for the chicken pox model has minimal context $\ci$-DAGs depicted in Figure~\ref{fig:minimal context I-graphs}. 
    The corresponding interventional CStrees $\mathcal{T}_A^\ci$ and $\mathcal{T}_B^\ci$ are quickly checked to have the same minimal context $\ci$-DAGs as $\mathcal{T}^\ci$ but with the edge directions between $1,2$ and $1,3$ reversed, respectively. 
    In particular, all three models have distinct v-structures and therefore are not $\ci$-Markov equivalent.
    Hence, choosing the model that best fits the observational and experimental data will allow us to distinguish the true causal structure while also accommodating for the context-specific nature of both the model itself and the considered soft interventions. 
\end{example}

\begin{remark}
    \label{rem: I-LDAG}
    Since the minimal context graph representation of a CStree may be large, provided the model admits several minimal contexts, the main reason for working with the representation $\GG_\mathcal{T}^\ci$ of an interventional CStree $\mathcal{T}^\ci$ would be to verify model equivalence. 
    A more compact representation of the interventional model may be obtained by augmenting the LDAG $(\GG,\mathcal{L})$ representation of $\mathcal{T}$. 
    Specifically, supposing that $\GG$ has node set $[p]$ and edge set $E$, we define the DAG $\GG^\ci = ([p]\cup W_\ci, E \cup E_\ci)$, where
    \[
    W_\ci = \{\omega_k : k \in [K]\} \qquad \mbox{and} \qquad
    E_{\ci} = \{\omega_k \rightarrow \pi_i : k\in [K] \mbox{ and there exists } \vx_S \in \mathbf{s}_i \cap I_k\}.
    \]
    (Note that when $\mathcal{T}$ is the CStree representation of a DAG model $\GG$, this is simply the $\ci$-DAG $\GG^\ci$ of \citet{YKU18}.)
    Define then the set of edge labels $\mathcal{L}^\ci = \mathcal{L}\cup \bigcup_{k\in [K]}L_{k,\pi_i}$
    where
    \[
    L_{k,\pi_i} = \mathcal{X}\setminus \{\vx: \vx \mbox{ has a subcontext in }\cup_{\vx_S\in I_k}\mathcal{S}_{\pi,i}(\vx_S)\}.
    \]
    Since the interventional edges in the LDAG $(\GG^\ci,\mathcal{L}^\ci)$ vanish whenever an outcome lies in an intervened stage, one can verify that any $(\vX^0,\ldots, \vX^K)\in\MM(\mathcal{T},\ci)$ also lies in $\MM(\GG^\ci,\mathcal{L}^\ci)$. 
    Hence, the LDAG $(\GG^\ci,\mathcal{L}^\ci)$ is a valid representation of the model that is conveniently more compact than the minimal context graphs $\GG_\mathcal{T}^\ci$.
    
    Note also that a characterization of context-specific, general interventional models in the language the LDAGs $(\GG^\ci,\mathcal{L}^\ci)$ that generalizes the Markov equivalence characterization of LDAGs described in Remark~\ref{rem: LDAG markov property} would reduce to considering equivalence of context graphs and thus recover our result in Theorem~\ref{thm: interventional equivalence characterization}.
    Thus, Theorem~\ref{thm: interventional equivalence characterization} may be viewed as a generalization of \citep[Theorem~4]{PNKC15} to context-specific, general interventions in the LDAGs with a factorization that easily extends the soft intervention factorization in~\eqref{eqn:I-DAGfactorization}; i.e., the family of CStrees.
\end{remark}

\begin{example}
    \label{ex: I-LDAG}
    Consider the interventional CStree model $\MM(\mathcal{T}, \ci)$ where $\mathcal{T}$ is the CStree in Figure~\ref{fig:CStreeExample} for the chicken pox model from Example~\ref{ex:CSI causal model} and $\ci$ is the CS-complete sequence of interventions from Example~\ref{ex: CS complete}. 
    The minimal context graph representation of this model is depicted in Figure~\ref{fig:minimal context I-graphs}. 
    The more compact LDAG representation for $\MM(\mathcal{T}, \ci)$ is shown in Figure~\ref{fig:I-LDAG}. 
    Note that, while this representation is more concise, it may require a bit more parsing to identify the context-specific mechanisms by comparing edge labels. 
\end{example}

\begin{figure}[t]
    \centering
    \begin{tikzpicture}[thick,scale=0.3]
		\node[circle, draw, fill=black!0, inner sep=1pt, minimum width=1pt] (H1) at (3.25,8) {\large$1$};
		\node[circle, draw, fill=black!0, inner sep=1pt, minimum width=1pt] (B1) at (-2.25,4) {\large$2$};
		\node[circle, draw, fill=black!0, inner sep=1pt, minimum width=1pt] (G1) at (8.25,4) {\large$3$};
		\node[circle, draw, fill=black!0, inner sep=1pt, minimum width=1pt] (B2) at (3.25,0) {\large$4$};

            \node[circle, fill=black!0, inner sep=1pt, minimum width=1pt] (i1) at (-12,4) {$\omega_1$} ; 
            \node[circle, fill=black!0, inner sep=1pt, minimum width=1pt] (i2) at (18,4) {$\omega_2$} ; 

		\draw[->]   (H1) -- (B1) ;
		\draw[->]   (H1) -- node[midway,sloped,above]{\scriptsize ${\{1\}}$}(G1) ;
		\draw[->]   (H1) -- node[align=center,below, rotate=-90]{{\scriptsize $\{(0,1),\, (\ast,0)\}$}} (B2) ;
		\draw[->]   (B1) -- node[align=center,below, rotate=-40]{{\scriptsize $\{(\ast,0)\}$}} (B2) ;
		\draw[->]   (G1) -- (B2) ;
		\draw[->]   (B1) -- (G1) ;

            \draw[->]   (i1) -- (B1) ;
            \draw[->]   (i2) -- node[midway,sloped,above]{\scriptsize $\mathcal{X}\setminus{\{(\ast,1, \ast, \ast)\}}$}(G1) ;
	\end{tikzpicture}
    
\caption{The $\ci$-LDAG representation of the interventional CStree model $\MM(\mathcal{T},\ci)$ in Example~\ref{ex: I-CStree equivalence}.}
\label{fig:I-LDAG}
\end{figure}

\section{Real Data Example}
\label{sec:real examples}
We give a small example of interventional CStree models in a real data scenario. 
Note that the model estimation methods used in this section are brute force, as we leave all questions of model selection (e.g., structure learning) and inference to future work, as discussed in Section~\ref{sec:discussion}.
The analyses conducted here are available at \url{https://github.com/soluslab/CStrees}.

\subsection{Context fear conditioning data set description}
\label{subsec: data description}
The data set available at the UCI Machine Learning Repository \citep{DG19} 
records expression levels of $77$ different proteins/protein modifications measured in the cerebral cortex of mice.  
Each mouse is either a control or a Ts65Dn trisomic Down Syndrome mouse. 
Each mouse was either injected with saline or treated with the drug memantine, which is believed to affect associative learning in mice.  
The mice were then trained in context fear conditioning (CFC), a task used to assess associative learning \citep{RKS98}.  
The standard CFC protocol divides mice into two groups: the \emph{context-shock} (CS) group, which are placed into a novel cage, allowed to explore, and then receive a brief electric shock, and the \emph{shock-context} (SC) group, which is placed in the novel cage, immediately given the electric shock, and thereafter allowed to explore.  
The expression levels of $77$ different proteins were measured from eight different classes of mice, defined by whether the mouse is control (c) or trisomic (t), received memantine (m) or saline (s), and whether it was in a CS or SC group for the learning task.  
The eight classes are denoted as c-CS-s, c-CS-m, c-SC-s, c-SC-m, t-CS-s, t-CS-m, t-SC-s, t-SC-m.  
There are $9,10,9,10,7,9,9$, and $9$ mice in each class, respectively.  
Fifteen measurements of each protein were registered per mouse, yielding a total of $1080$ measurements per protein. 
Each measurement is regarded as an independent sample.  

\subsection{Model selection process}
\label{subsec: model selection}
Since the number of CStrees on $p$ nodes grows much faster than the number of DAGs on $p$ nodes (see Table~\ref{fig: counting CStrees} in Appendix~\ref{appsec: enumeration}), we limit our small analysis to a single observational and a single interventional group. 
We treat the $135$ measurements taken from the group c-SC-s as observational data and the $150$ measurements taken from the group c-SC-m as interventional data, taking treatment with memantine as our intervention. 
We consider the expression levels of four proteins, each of which is believed to discriminate between between the classes c-SC-s and c-SC-m (see \citep[Table 3, Column 2]{HGK15}).  
As CStree models are for categorical data, we discretized the data set using the quantile method.  
The result is four binary random variables, one for each protein considered, with outcomes ``high (expression level)'' and ``low (expression level).''

Since the target of the intervention is unknown, to build an interventional CStree model, we need to select an appropriate CStree out of all CStrees on four binary variables with a single intervention target that could be targeting any subset of the variables in any context. 
There are $59,136$ CStrees on four binary random variables.  
As the intervention targets are latent, we need to consider all possible interventions in a given observational CStree.  
For a CStree on four binary variables, the number of such interventional models to be scored can be as large as $2^{15}$.  
To avoid excessive runtimes, we first learn an optimal equivalence class of CStrees with respect to the Bayesian Information Criterion (BIC), and then score all possible interventional CStrees that arise by targeting any subset of the stages in any element of this equivalence class.  

As the development of structure learning methods falls outside the scope of this paper, we use a naive method to learn the BIC-optimal equivalence class, which we refer to as BHC-CS. 
Given a random sample $\mathbb{D}$, BHC-CS learns a CStree on $p$ variables, with a causal ordering specified a priori, by starting with the complete dependence model (i.e., all stages are singletons) and considering for $i \in\{2,\ldots, p\}$ all possible pairwise mergings of stages in level $i$ picking the BIC-optimal merging at each iteration.  Here, each merging is done to (minimally) ensure the resulting tree is also a CStree.
When there is no longer any merging in level $i$ that increases BIC, the algorithm moves to level $i+1$ and repeats the process. 
BHC-CS is thus a version of the \emph{backwards hill-climbing} algorithm for learning staged trees \citep{CLRV20} that learns only CStrees.
BHC-CS saves the optimal model learned for each causal ordering and then returns the best scoring model overall. 
A formula for the BIC of a CStree is presented in Appendix~\ref{appsec: BIC}.

\subsubsection{pCAMKII, pPKCG, NR1 and pS6}
\label{subsubsec: protein comparison 1}
A representative of the BIC-optimal equivalence class for only observational data on the proteins pCAMKII, pPKCG, NR1 and pS6 is given in Figure~\ref{fig: context graphs for optimal observational tree 1}. 
Its staged tree representation is given Figure~\ref{fig: BIC optimal observational tree 1} in Appendix~\ref{appsec: additional results}.

\begin{figure}[t!]
	\centering
 \begin{tabular}{| c | c | c | c |}\hline
\begin{tikzpicture}[thick, scale=0.23]
 	 \node[circle, fill=black!00, inner sep=1pt, minimum width=1pt] (1) at (25,-5) {\tiny pCAMKII};
 	 \node[circle, fill=black!00, inner sep=1pt, minimum width=1pt] (2) at (32,-5) {\tiny pS6};
 	 \node[circle, fill=black!00, inner sep=1pt, minimum width=1pt] (3) at (36,2) {\tiny NR1};
 	 \node[circle, fill=black!00, inner sep=1pt, minimum width=1pt] (4) at (29,2) {\tiny pPKCG};

          \draw[->]   (1) -- (3) ;
 	 \draw[->]   (1) -- (4) ;
 	 \draw[->]   (2) -- (3) ;
 	 \draw[->]   (2) -- (4) ;
 	 \draw[<-]   (3) -- (4) ;

        \node at (23,4) {\footnotesize $\GG_{\vx_\emptyset}$} ;
   
\end{tikzpicture}
 &
\begin{tikzpicture}[thick, scale=0.23]
        \node[circle, fill=black!00, inner sep=1pt, minimum width=1pt] (21) at (46,-5) {\tiny pS6};
 	 \node[circle, fill=black!00, inner sep=1pt, minimum width=1pt] (31) at (49,2) {\tiny NR1};
 	 \node[circle, fill=black!00, inner sep=1pt, minimum width=1pt] (41) at (42,1) {\tiny pPKCG};

   
        \draw[->]   (21) -- (31) ;
 	\draw[->]   (41) -- (31) ;
  
        \node at (42,5.5) {\footnotesize $\GG_{\textrm{pCAMKII = high}}$} ;
        
\end{tikzpicture}
 &
\begin{tikzpicture}[thick, scale=0.23]
        \node[circle, fill=black!0, inner sep=1pt, minimum width=1pt] (22) at (60,-5) {\tiny pS6};
 	 \node[circle, fill=black!0, inner sep=1pt, minimum width=1pt] (32) at (62,2) {\tiny NR1};
 	 \node[circle, fill=black!0, inner sep=1pt, minimum width=1pt] (42) at (55,1) {\tiny pPKCG};
   
        \draw[->]   (22) -- (42) ;
        
        \node at (54,5.5) {\footnotesize $\GG_{\textrm{pCAMKII = low}}$} ;
        
\end{tikzpicture}
 &
\begin{tikzpicture}[thick, scale=0.23]
        \node[circle, fill=black!00, inner sep=1pt, minimum width=1pt] (13) at (68,-5) {\tiny pCAMKII};
 	 \node[circle, fill=black!00, inner sep=1pt, minimum width=1pt] (23) at (75,-5) {\tiny pS6};
 	 \node[circle, fill=black!00, inner sep=1pt, minimum width=1pt] (43) at (72,2) {\tiny NR1}; 	
   
        \draw[->]   (13) -- (23) ;
 	\draw[->]   (13) -- (43) ;
   
        \node at (68.5,4) {\footnotesize $\GG_{\textrm{pPKCG = low}}$} ;
        
\end{tikzpicture}
 \\\hline
     
 \end{tabular}
	\vspace{-0.2cm}
	\caption{An optimal observational CStree for pCAMKII, pPKCG, NR1 and pS6.}
	\label{fig: context graphs for optimal observational tree 1} 
\end{figure}
According to Corollary~\ref{cor: VP generalization}, the CStree in Figure~\ref{fig: BIC optimal observational tree 1} is in an equivalence class of size two, where the other element is given by swapping pCAMKII and pS6 in the causal ordering.  
This corresponds to reversing the arrow between these two nodes in $\GG_{\textrm{pPKCG = low}}$.  
Notice also that the arrow pPKCG$\to$NR1 is covered in $\GG_{\vx_\emptyset}$, and hence reversing this arrow would result in a Markov equivalent DAG.  
However, according to Corollary~\ref{cor: VP generalization}, this arrow is fixed among all elements of the equivalence class due to the v-structure in the context graph $\GG_{\textrm{pCAMKII=high}}$.  

When the interventional data is considered, the resulting BIC-optimal interventional CStree is given in Figure~\ref{fig: BIC optimal interventional context graphs 1}.
Its interventional staged tree representation is depicted in Figure~\ref{fig: BIC optimal interventional tree 1} in Appendix~\ref{appsec: additional results}.
The intervention is graphically represented by the node $\omega_{\textrm{mem}}$.
Precisely one stage is targeted for intervention and it is in the context pCAMKII = high.  

While this intervention introduces new v-structures, none of the new v-structures fix edges that were not already fixed in the observational context graphs.  
Hence, a targeted intervention at pCAMKII or pS6 in the context that pPKCG = low is needed to distinguish the true causal structure among the proteins. 

\begin{figure}[t!]
	\centering

\begin{tabular}{ | c | c | c | c |}\hline
\begin{tikzpicture}[thick,scale=0.23]
 	 \node[circle, fill=black!00, inner sep=1pt, minimum width=1pt] (i2) at (24.5,-1) {\footnotesize$\omega_{\textrm{mem}}$};
 	 
 	 \node[circle, fill=black!00, inner sep=1pt, minimum width=1pt] (1) at (25,-5) {\tiny pCAMKII};
 	 \node[circle, fill=black!00, inner sep=1pt, minimum width=1pt] (2) at (32,-5) {\tiny pS6};
 	 \node[circle, fill=black!00, inner sep=1pt, minimum width=1pt] (3) at (36,2) {\tiny NR1};
 	 \node[circle, fill=black!00, inner sep=1pt, minimum width=1pt] (4) at (29,2) {\tiny pPKCG};

 	 \draw[->]   (1) -- (3) ;
 	 \draw[->]   (1) -- (4) ;
 	 \draw[->]   (2) -- (3) ;
 	 \draw[->]   (2) -- (4) ;
 	 \draw[<-]   (3) -- (4) ;
 	 \draw[->]   (i2) -- (4) ;

	 \node at (23,4) {\footnotesize $\GG_{\vx_\emptyset}$} ;
  
\end{tikzpicture}
&
\begin{tikzpicture}[thick,scale=0.23]
 	 \node[circle, fill=black!00, inner sep=1pt, minimum width=1pt] (i21) at (40,-5) {\footnotesize$\omega_{\textrm{mem}}$};

 	 \node[circle, fill=black!00, inner sep=1pt, minimum width=1pt] (21) at (46,-5) {\tiny pS6};
 	 \node[circle, fill=black!00, inner sep=1pt, minimum width=1pt] (31) at (49,2) {\tiny NR1};
 	 \node[circle, fill=black!00, inner sep=1pt, minimum width=1pt] (41) at (42,1) {\tiny pPKCG};

        \draw[->]   (21) -- (31) ;
 	\draw[->]   (41) -- (31) ;
 	\draw[->]   (i21) -- (41) ;
  
        \node at (43,5) {\footnotesize $\GG_{\textrm{pCAMKII = high}}$} ;
        
\end{tikzpicture}
&
\begin{tikzpicture}[thick,scale=0.23]
 	 \node[circle, fill=black!00, inner sep=1pt, minimum width=1pt] (i22) at (54.5,-5) {\footnotesize$\omega_{\textrm{mem}}$};

	 \node[circle, fill=black!0, inner sep=1pt, minimum width=1pt] (22) at (60,-5) {\tiny pS6};
 	 \node[circle, fill=black!0, inner sep=1pt, minimum width=1pt] (32) at (62,2) {\tiny NR1};
 	 \node[circle, fill=black!0, inner sep=1pt, minimum width=1pt] (42) at (55,1) {\tiny pPKCG};

        \draw[->]   (22) -- (42) ;
        
        \node at (56,5) {\footnotesize $\GG_{\textrm{pCAMKII = low}}$} ;
        
\end{tikzpicture}
&
\begin{tikzpicture}[thick,scale=0.23]
 	  \node[circle, fill=black!00, inner sep=1pt, minimum width=1pt] (i23) at (66.5,-1) {\footnotesize$\omega_{\textrm{mem}}$};
 	 
 	 \node[circle, fill=black!00, inner sep=1pt, minimum width=1pt] (13) at (68,-5) {\tiny pCAMKII};
 	 \node[circle, fill=black!00, inner sep=1pt, minimum width=1pt] (23) at (75,-5) {\tiny pS6};
 	 \node[circle, fill=black!00, inner sep=1pt, minimum width=1pt] (43) at (72,2) {\tiny NR1}; 	

        \draw[->]   (13) -- (23) ;
 	\draw[->]   (13) -- (43) ;
   
        \node at (68.5,4) {\footnotesize $\GG_{\textrm{pPKCG = low}}$} ;
        
\end{tikzpicture}
\\\hline
\end{tabular}

	\vspace{-0.2cm}
	\caption{An optimal interventional CStree within the MEC of Figure~\ref{fig: context graphs for optimal observational tree 1}.}
	\label{fig: BIC optimal interventional context graphs 1} 
\end{figure}  

\subsubsection{pPKCG, pNUMB, pNR1 and pCAMKII}
\label{subsubsec: protein comparison 2}
To illustrate the refinement of equivalence classes via intervention, we can consider another set of four proteins: pPKCG, pNUMB, pNR1 and pCAMKII.  
A representative of the BIC-optimal equivalence class from observational data only is depicted in Figure~\ref{fig: context graphs for optimal observational tree}.
Via Corollary~\ref{cor: VP generalization}, one can deduce that the equivalence class contains three additional CStrees, which are shown in Figure~\ref{fig: other three trees} in Appendix~\ref{appsec: additional results}.  
When the interventional data is included, the BIC scoring criterion cannot definitively distinguish between the two interventional CStrees in Figure~\ref{fig: optimal interventional context graphs} given the relatively small sample size. 
By Theorem~\ref{thm: interventional equivalence characterization}, the CStree in Figure~\ref{fig: optimal interventional context graphs a} has an equivalence class of size one, whereas there are three trees in the equivalence class represented by Figure~\ref{fig: optimal interventional context graphs b}. 
A simple bootstrap on the interventional data, producing $1000$ replicates, slightly favors the model in Figure~\ref{fig: optimal interventional context graphs a} with an equivalence class of size $1$.

\begin{figure}[t!]
	\centering
 \begin{tabular}{| c | c |}\hline
\begin{tikzpicture}[thick,scale=0.23]
 	 \node[circle, fill=black!00, inner sep=1pt, minimum width=1pt] (1) at (25,-5) {\tiny pPKCG};
 	 \node[circle, fill=black!00, inner sep=1pt, minimum width=1pt] (2) at (35,-5) {\tiny pNUMB};
 	 \node[circle, fill=black!00, inner sep=1pt, minimum width=1pt] (3) at (39,2) {\tiny pNR1};
 	 \node[circle, fill=black!00, inner sep=1pt, minimum width=1pt] (4) at (29,2) {\tiny pCAMKII};

 	 \draw[->]   (1) -- (2) ;
 	 \draw[->]   (1) -- (4) ;
 	 \draw[->]   (2) -- (3) ;
 	 \draw[->]   (3) -- (4) ;

	 \node at (23,4) {\footnotesize $\GG_{\vx_\emptyset}$} ;
  
\end{tikzpicture}
&
\begin{tikzpicture}[thick,scale=0.23]
 	 \node[circle, fill=black!00, inner sep=1pt, minimum width=1pt] (21) at (51,-5) {\tiny pNUMB};
 	 \node[circle, fill=black!00, inner sep=1pt, minimum width=1pt] (31) at (57,2) {\tiny pNR1};
 	 \node[circle, fill=black!00, inner sep=1pt, minimum width=1pt] (41) at (47,0) {\tiny pCAMKII};

 	 \draw[->]   (21) -- (31) ;

	   \node at (46,4) {\footnotesize $\GG_{\textrm{pPKCG = high}}$} ;
  
\end{tikzpicture}
\\\hline
 \end{tabular}
 
	\vspace{-0.2cm}
	\caption{An optimal observational CStree for pPKCG, pNUMB, pNR1, pCAMKII.}
	\label{fig: context graphs for optimal observational tree}
\end{figure}

\begin{figure}
\centering
    	{\begin{subfigure}[t]{0.45\textwidth}
	     \centering 
\begin{tabular}{| c | c |}\hline
\begin{tikzpicture}[thick,scale=0.23]
 	 \node[circle, fill=black!00, inner sep=1pt, minimum width=1pt] (1) at (29,-5) {\tiny pPKCG};
 	 \node[circle, fill=black!00, inner sep=1pt, minimum width=1pt] (2) at (36,-5) {\tiny pNUMB};
 	 \node[circle, fill=black!00, inner sep=1pt, minimum width=1pt] (3) at (37,2) {\tiny pNR1};
 	 \node[circle, fill=black!00, inner sep=1pt, minimum width=1pt] (4) at (30,2) {\tiny pCAMKII};
 	 \node[circle, fill=black!00, inner sep=1pt, minimum width=1pt] (i2) at (32.75,-0.5) {\footnotesize$\omega_{\textrm{mem}}$};

 	 \draw[->]   (1) -- (2) ;
 	 \draw[->]   (1) -- (4) ;
 	 \draw[->]   (2) -- (3) ;
 	 \draw[->]   (3) -- (4) ;
 	 \draw[->]   (i2) -- (2) ;

	 \node at (28,4.75) {\footnotesize $\GG_{\vx_\emptyset}$} ;
\end{tikzpicture}
&
\begin{tikzpicture}[thick,scale=0.23]
 	 \node[circle, fill=black!00, inner sep=1pt, minimum width=1pt] (21) at (47,-6) {\tiny pNUMB};
 	 \node[circle, fill=black!00, inner sep=1pt, minimum width=1pt] (31) at (51.5,1) {\tiny pNR1};
 	 \node[circle, fill=black!00, inner sep=1pt, minimum width=1pt] (41) at (44.5,0.5) {\tiny pCAMKII};
 	 \node[circle, fill=black!00, inner sep=1pt, minimum width=1pt] (i21) at (41.5,-2) {\footnotesize$\omega_{\textrm{mem}}$};

 	 \draw[->]   (21) -- (31) ;
 	 \draw[->]   (i21) -- (21) ;

        \node at (43.5,4) {\footnotesize $\GG_{\textrm{pPKCG = high}}$} ;
        
\end{tikzpicture}
\\\hline
\end{tabular}
	\caption{\,}
	\label{fig: optimal interventional context graphs a}
        \end{subfigure} }
        \hfill
 {	\begin{subfigure}[t]{0.45\textwidth}
\centering 
\begin{tabular}{| c | c |}\hline
\begin{tikzpicture}[thick,scale=0.23]
 	 \node[circle, fill=black!00, inner sep=1pt, minimum width=1pt] (i2) at (34,-0.5) {\footnotesize$\omega_{\textrm{mem}}$};
 	 
 	 \node[circle, fill=black!00, inner sep=1pt, minimum width=1pt] (1) at (29,-5) {\tiny pPKCG};
 	 \node[circle, fill=black!00, inner sep=1pt, minimum width=1pt] (2) at (36,-5) {\tiny pNUMB};
 	 \node[circle, fill=black!00, inner sep=1pt, minimum width=1pt] (3) at (37,2) {\tiny pNR1};
 	 \node[circle, fill=black!00, inner sep=1pt, minimum width=1pt] (4) at (30,2) {\tiny pCAMKII};

 	 \draw[<-]   (1) -- (2) ;
 	 \draw[->]   (1) -- (4) ;
 	 \draw[<-]   (2) -- (3) ;
 	 \draw[->]   (3) -- (4) ;
 	 \draw[->]   (i2) -- (1) ;

	 \node at (28,4.75) {\footnotesize $\GG_{\vx_\emptyset}$} ;
  
\end{tikzpicture}
&
\begin{tikzpicture}[thick,scale=0.23]
 	 \node[circle, fill=black!00, inner sep=1pt, minimum width=1pt] (i21) at (41.5,-1.5) {\footnotesize$\omega_{\textrm{mem}}$};

 	 \node[circle, fill=black!00, inner sep=1pt, minimum width=1pt] (21) at (42,-6) {\tiny pNUMB};
 	 \node[circle, fill=black!00, inner sep=1pt, minimum width=1pt] (31) at (48.5,1) {\tiny pNR1};
 	 \node[circle, fill=black!00, inner sep=1pt, minimum width=1pt] (41) at (41.75,1.5) {\tiny pCAMKII};

 	 \draw[<-]   (21) -- (31) ;

	   \node at (43.5,4) {\footnotesize $\GG_{\textrm{pPKCG = high}}$} ;
  
\end{tikzpicture}
\\\hline
\end{tabular}
 	
 	\caption{\,}
	\label{fig: optimal interventional context graphs b}
 	\end{subfigure}}
 	\caption{Optimal interventional CStrees within the MEC of Figure~\ref{fig: context graphs for optimal observational tree}.}
 	\label{fig: optimal interventional context graphs}
\end{figure}

\section{Discussion}
\label{sec:discussion}
We introduced the family of context-specific conditional independence models called CStrees, which are defined according to a factorization criterion generalizing the factorization definition of a DAG model. 
This factorization definition allowed for a straightforward extension of the general interventional DAG model $\MM(\GG, \ci)$ to a context-specific setting that further accommodates context-specific, general interventions. 
We obtained a graphical characterization of model equivalence for CStree models that extends to a characterization of interventional CStree models when the intervention targets are CS-complete. 
A first natural endeavour for future work is to develop structure learning algorithms for CStree models. 
This was already initiated in the observational data regime by \cite{rios2024scalable} in a recent paper following the initial release of this article. 
It would be of valuse to extend their work to learning interventional CStree models. 

Other considerations include generalizing properties of DAGs useful in inference to the context-specific setting via CStrees, such as characterizations of decomposable models relevant in clique-tree inference algorithms. 
First steps in this direction have been taken in a recent paper by \citet{alexandr2024decomposable} that followed the initial release of this article. 

The CS-complete interventions are, in a sense made precise in Section~\ref{sec:interventionalcstrees}, the natural extension of general interventions in DAG models to a context-specific setting. 
However, it may also be of interest to investigate graphical representations of incomplete, context-specific interventions in, for instance DAG models.
In such models, the underlying model would be a conditional independence model, and hence only have the minimal context DAG $\GG_{\vx_\emptyset}$, but the interventions may be context-specific requiring additional combinatorial structure to be concisely represented.

\subsection*{Acknowledgements}
The authors would also like to thank Danai Deligeorgaki, Christiane G\"orgen, Manuele Leonelli, and Gherardo Varando for helpful discussions.
Eliana Duarte was supported by the Deutsche Forschungsgemeinschaft DFG under grant 314838170, GRK 2297 MathCoRe,  by the FCT grant 2020.01933.CEECIND, and partially supported by CMUP under the FCT grant UIDB/00144/2020.
L.~Solus was partially supported the Wallenberg Autonomous Systems and Software Program (WASP) funded by the Knut and Alice Wallenberg Foundation, the G\"oran Gustafsson Prize for Young Researchers, a project grant from KTH Digital Futures and a Starting Grant from The Swedish Research Council.

\bibliographystyle{plainnat}
\bibliography{2024bibliography}

\clearpage
\appendix

\section{Constructing the LDAG representation of a CStree}
\label{appsec: LDAG construction}

In this section, we give an explicit description as to how one constructs an LDAG $(\GG, \mathcal{L})$ for a CStree $\mathcal{T} = (\pi, \mathbf{s})$ such that $\MM(\mathcal{T})=\MM(\GG, \mathcal{L})$. 
We refer the reader to Section~\ref{sec:relatedwork} for the definition of an LDAG model and to Subsection~\ref{subsec:cstrees} for the definition of a CStree model. 

By Definition~\ref{def:CStree}, the CStree model $\MM(\mathcal{T})$ consists of all distributions $\vX$ satisfying the CSI relations $X_{\pi_i} \independent \vX_{[\pi_1:\pi_{i-1}]\setminus S} \mid \vX_S = \vx_S$ corresponding to the stages $\mathcal{S}_{\pi, i}(\vx_S)\in \mathbf{s}$. 
In particular, for all $i\in[p]$, any $\vX\in\MM(\mathcal{T})$ satisfies all CSI relations in the set 
\[
\mathcal{D}_{\pi, i} = \{ X_{\pi_i} \independent \vX_{[\pi_1:\pi_{i-1}]\setminus S} \mid \vX_S = \vx_S : \mathcal{S}_{\pi, i}(\vx_S)\in \mathbf{s}_i\}. 
\]
Let $P_{\pi_i}$ denote the union of all sets $S\subseteq[\pi_1:\pi_{i-1}]$ such that $X_{\pi_i} \independent \vX_{[\pi_1:\pi_{i-1}]\setminus S} \mid \vX_S = \vx_S\in \mathcal{D}_{\pi,i}$.
Let $\GG = ([p], E)$ denote that DAG in which $\pa_\GG(\pi_i) = P_{\pi_i}$ for all $i\in[p]$. 
It follows that $\vX$ is Markov to $\GG$, as $X_{\pi_i}$ depends only on the variables in $P_i$ through the relations in $\mathcal{D}_{\pi,i}$. 

It remains to construct the set of labels $\mathcal{L}$. 
Recall that the label $L_{\pi_j,\pi_i}$ of an edge $\pi_j \rightarrow \pi_i$ in $\GG$ is the subset of outcomes $\vx_{\pa_\GG(\pi_i)\setminus \pi_j}\in\mathcal{X}_{\pa_\GG(\pi_i)\setminus \pi_j}$ for which $X_{\pi_i}\independent X_{\pi_j} \mid \vX_{\pa_\GG(\pi_i)\setminus \pi_j} = \vx_{\pa_\GG(\pi_i)\setminus \pi_j}$. 
For each relation $X_{\pi_i} \independent \vX_{[\pi_1:\pi_{i-1}]\setminus S} \mid \vX_S = \vx_S\in \mathcal{D}_{\pi,i}$, we know that $S\subseteq \pa_\GG(\pi_i)$. 
Hence, we may apply weak union (see Subsection~\ref{subsec: context-specific conditional independence}) to recover the CSI relation
\begin{equation*}
\label{eqn: stage relation}
X_{\pi_i} \independent \vX_{[\pi_1:\pi_{i-1}]\setminus (\pa_\GG(\pi_i)\setminus\pi_j)} \mid \vX_{\pa_\GG(\pi_i)\setminus (S\cup \pi_j)}, \vX_S = \vx_S.
\end{equation*}
Applying the decomposition axiom then shows that $\vX$ satisfies the pairwise relation
\begin{equation*}
\label{eqn: decomposed stage relation}
X_{\pi_i} \independent X_{\pi_j} \mid \vX_{\pa_\GG(\pi_i)\setminus (S\cup\pi_j)}, \vX_S = \vx_S.
\end{equation*}
Following specialization, and recalling that $S\subseteq \pa_\GG(\pi_i)$, we see that $\vX$ satisfies all pairwise relations 
\begin{equation}
   X_{\pi_i} \independent X_{\pi_j} \mid \vX_{\pa_\GG(\pi_i)\setminus (S\cup\pi_j)} = \vx_{\pa_\GG(\pi_i)\setminus (S\cup\pi_j)}, \vX_S = \vx_S. 
\end{equation}
Hence, we may take
\[
L_{\pi_j, \pi_i} = \bigcup_{X_{\pi_i} \independent \vX_{[\pi_1:\pi_{i-1}]\setminus S} \mid \vX_S = \vx_S \in \mathcal{D}_{\pi,i}}\mathcal{X}_{\pa_\GG(\pi_i)\setminus (S\cup\pi_j)} \times \vX_S = \vx_S.
\]
It follows that $\vX\in\MM(\GG, \mathcal{L})$. 
Hence, $\MM(\mathcal{T})\subseteq \MM(\GG,\mathcal{L})$ for this LDAG. 

To see the reverse inclusion, start with the LDAG $(\GG, \mathcal{L})$ with label set $\mathcal{L}$ defined as above. 
It suffices to show that $\vX\in\MM(\GG, \mathcal{L})$ satisfies the relations $X_{\pi_i} \independent \vX_{[\pi_1:\pi_{i-1}]\setminus S} \mid \vX_S = \vx_S\in \mathcal{D}_{\pi,i}$ for all $i$. 
Fixing one such relation for some $i$ and context $\vx_S$, note that the label set $\mathcal{L}$ implies that $\vX\in\MM(\GG,\mathcal{L})$ satisfies the relations
\[
X_{\pi_i} \independent X_{\pi_j} \mid \vX_{\pa_\GG(\pi_i)\setminus (S\cup\pi_j)}, \vX_S = \vx_S
\,\,\,\,
\mbox{and}
\,\,\,\,
X_{\pi_i} \independent X_{\pi_k} \mid \vX_{\pa_\GG(\pi_i)\setminus (S\cup\pi_k)}, \vX_S = \vx_S
\]
for any two parents $\pi_j,\pi_k\in\pa_\GG(\pi_i)$. 
Hence, repeated application of the intersection axiom implies that $\vX$ satisfies
\begin{equation}
    \label{eqn: contraction 1}
    \vX_{\pi_i} \independent \vX_{\pa_\GG(\pi_i)\setminus S} \mid \vX_S = \vx_S.
\end{equation}

Since $\vX\in \MM(\GG)$ by definition of $\MM(\GG,\mathcal{L})$ we further have that $\vX$ satisfies the CI relation $\vX_{\pi_i}\independent \vX_{[\pi_1:\pi_{i-1}]\setminus\pa_\GG(\pi_i)} \mid \vX_{\pa_\GG(\pi_i)}$. 
Recalling that $S\subseteq \pa_\GG(\pi_i)$ and applying specialization, we obtain that $\vX$ satisfies the relation
\begin{equation}
\label{eqn: contraction 2}
\vX_{\pi_i} \independent \vX_{[\pi_1:\pi_{i-1}]\setminus\pa_\GG(\pi_i)} \mid \vX_{\pa_\GG(\pi_i)\setminus S}, \vX_S = \vx_S.
\end{equation}
Applying contraction to~\eqref{eqn: contraction 1} and~\eqref{eqn: contraction 2}, we find that $\vX$ satisfies 
\[
\vX_{\pi_i} \independent \vX_{[\pi_1: \pi_{i-1}]\setminus S} \mid \vX_S = \vx_S.
\]
Since the choice of $i$ and relation in $\mathcal{D}_{\pi, i}$ were arbitrary, we see that $\vX\in \MM(\GG, \mathcal{L})$ satisfies all CSI relations defining the model $\MM(\mathcal{T})$. Thus, $\MM(\mathcal{T}) = \MM(\GG, \mathcal{L})$, and the proof is complete.

\section{Proofs for Section~\ref{sec:cstrees}}
\label{appsec: observational proofs}

\begin{proof}[Proof of Theorem~\ref{thm: containment}]
The containment $\mathbb{D}\subseteq \mathbb{C}$ is presented in Remark~\ref{rem: DAGs are CStrees}. 
To see that this containment is strict we may construct a CStree that is not a DAG. 
Let $\mathcal{T} = (\pi, \mathbf{s})$ over three binary variables $X_1,X_2,X_3$, where $\pi = 123$ and 
\[
\mathbf{s} = \bigcup_{x_1\in\{0,1\}}\{\mathcal{S}_{\pi,2}(x_1)\}\cup 
\{\mathcal{S}_{\pi, 3}(x_1 = 0)\}\cup
\bigcup_{x_1x_2\in\{(1,0),(1,1)\}}\{\mathcal{S}_{\pi,3}(x_1x_2)\}.
\]
In this case, level $3$ of $\mathcal{T}$ is defined by three conditional distributions:
\[
X_3 | X_1 = 0 \sim \bernoulli(\theta_1), \, \, \, X_3 \mid X_{1,2} = (1,0) \sim \bernoulli(\theta_2), \mbox{ and} \,\, \,  X_3 \mid X_{1,2} = (1,1) \sim \bernoulli(\theta_3)
\]
The first conditional distribution implies $X_3 \independent X_2 \mid X_1 = 0$, so $\MM(\mathcal{T})$ is not the dependence model on $(X_1,X_2,X_3)$. 
On the other hand, any generic choice of $0 < \theta_1,\theta_2, \theta_3< 1$ results in $X_3 \not\independent X_2 \mid X_1 = 1$. 
Hence, the model does not satisfy $X_3 \independent X_2 \mid X_1$, and therefore is not a conditional independence model. 
Thus, it cannot be a DAG model, proving that $\mathbb{D}\subsetneq \mathbb{C}$. 

The inclusion $\mathbb{C}\subseteq \mathbb{L}$ is proven in Section~\ref{appsec: LDAG construction}. 
We observe that this inclusion is strict by way of example. 
Let $\GG = ([3], E)$ where 
\[
E = \{1\rightarrow 2, 1\rightarrow 3, 2\rightarrow3\},
\]
let 
$
L_{1,3} = \{0\}\subset\mathcal{X}_2,
$
and let 
$
L_{2,3} = \{1\}\subset\mathcal{X}_1.
$
It follows that $\vX\in\MM(\GG, \mathcal{L})$ satisfies the two CSI relations
\[
X_3 \independent X_1 \mid X_2 = 0\qquad \mbox{and} \qquad
X_3 \independent X_2 \mid X_1 = 1. 
\]
The former relation implies 
\[
P(X_3 \mid X_1 = 0, X_2 = 0 ) = P( X_3 \mid X_1 = 1, X_2 = 0),
\]
and the latter implies
\[
P(X_3 \mid X_1 = 1, X_2 = 0) = P(X_3 \mid X_1 = 1, X_2 = 1). 
\]
Hence, all three of the above conditional probabilities are equal.
It follows that $\MM(\GG, \mathcal{L})$ admits a staged tree representation with causal order $\pi = 123$ and a single non-singleton stage 
\[
\{(0,0), (1,0), (1,1))\}\subset\mathcal{X}_{\{1,2\}}.
\]
However, this stage does not admit a stage-defining context, and therefore $\MM(\GG, \mathcal{L})$ cannot be a CStree model.

To see the inclusion $\mathbb{L}\subseteq\mathbb{S}$, we first recall the definition of a (stratified) staged tree model.  
A \emph{(stratified) staged tree} is a pair $\mathcal{T} = (\pi, \mathbf{s})$ where $\pi$ is a variable ordering and $\mathbf{s}$ is a collection of sets. 
Given $\vX = (X_1,\ldots, X_p)$ a joint categorical distribution and an ordering $\pi = \pi_1\cdots \pi_p$, we construct a rooted tree $\mathcal{T}$ with node set $\{r\}\cup \bigcup_{i \in\{2,\ldots, p+1\}}\mathcal{X}_{\pi_1:\pi_{i-1}}$ and edge set containing for all $i$, $\vx_{\pi_1:\pi_{i-1}} \rightarrow \vx_{\pi_1:\pi_{i-1}}x_{\pi_i}$ for all $\vx_{\pi_1:\pi_{i-1}}\in\mathcal{X}_{\pi_1:\pi_{i-1}}$ and all $x_{\pi_i}\in\mathcal{X}_{\pi_i}$.  
We also include the edges $r\rightarrow x_{\pi_1}$ for all $x_{\pi_1}\in\mathcal{X}_{\pi_i}$. 
This tree is the same as constructed for CStrees in Subsection~\ref{subsec:cstrees}, so we may use the same terminology. 
For each $i\in[p]$, we then partition the nodes $\mathcal{X}_{\pi_1:\pi_{i-1}}$ in level $i$ into disjoint sets called stages. 
Nodes in the same stage are colored the same.  
When two nodes $\vx_{\pi_1:\pi_{i-1}}, \vx_{\pi_1:\pi_{i-1}}^\prime$ are colored the same it encodes the conditional invariance $P(X_{\pi_i} \mid \vX_{\pi_1:\pi_{i-1}} = \vx_{\pi_1:\pi_{i-1}}) = P(X_{\pi_i} \mid \vX_{\pi_1:\pi_{i-1}} = \vx_{\pi_1:\pi_{i-1}}^\prime)$. 
The staged tree model $\MM(\mathcal{T})$ consists of all distributions satisfying these pairwise conditional invariances. 

Note now that an LDAG model $\MM(\GG,\mathcal{L})$ consists of all joint distributions satisfying a collection of CSI relations $X_{\pi_i} \independent X_{\pi_j} \mid \vX_{\pa_\GG(\pi_i)\setminus \{\pi_j\}} = \vx_{\pa_\GG(\pi_i)\setminus \{\pi_j\}}$ for $\vx_{\pa_\GG(\pi_i)\setminus\pi_j}\in L_{\pi_j,\pi_i}$ and the collection of CI relations $X_{\pi_i} \independent \vX_{[\pi_1:\pi_{i-1}]\setminus\pa_\GG(\pi_i)} \mid \vX_{\pa_\GG(\pi_i)}$ for all $i\in[p]$.
We claim that this set of relations corresponds to a set of pairwise conditional invariances $P(X_{\pi_i} \mid \vX_{\pi_1:\pi_{i-1}} = \vx_{\pi_1:\pi_{i-1}}) = P(X_{\pi_i} \mid \vX_{\pi_1:\pi_{i-1}} = \vx_{\pi_1:\pi_{i-1}}^\prime)$ defining a staged tree model. 
To see this note that, by definition, the CI relation $X_{\pi_i} \independent \vX_{[\pi_1:\pi_{i-1}]\setminus\pa_\GG(\pi_i)} \mid \vX_{\pa_\GG(\pi_i)}$ corresponds to a collection of pairwise conditional invariances
\[
P(X_{\pi_i} \mid \vx_{[\pi_1:\pi_{i-1}]\setminus\pa_\GG(\pi_i)}, \vx_{\pa_\GG(\pi_i)}) = P(X_{\pi_i} \mid \vx_{[\pi_1:\pi_{i-1}]\setminus\pa_\GG(\pi_i)}^\prime, \vx_{\pa_\GG(\pi_i)})
\]
for all $\vx_{[\pi_1:\pi_{i-1}]\setminus\pa_\GG(\pi_i)}, \vx_{[\pi_1:\pi_{i-1}]\setminus\pa_\GG(\pi_i)}^\prime\in \mathcal{X}_{[\pi_1:\pi_{i-1}]\setminus\pa_\GG(\pi_i)}^\prime$ and each $\vx_{\pa_\GG(\pi_i)}\in\mathcal{X}_{\pa_\GG(\pi_i)}$.
Incorporating a CSI relation $X_{\pi_i} \independent X_{\pi_j} \mid \vX_{\pa_\GG(\pi_i)\setminus \{\pi_j\}} = \vx_{\pa_\GG(\pi_i)\setminus \{\pi_j\}}$ into this invariance simply shifts $x_{\pi_j}$ from the set of fixed elements to the set of varying elements:
\[
P(X_{\pi_i} \mid \vx_{[\pi_1:\pi_{i-1}]\setminus(\pa_\GG(\pi_i)\cup\pi_j)}, \vx_{\pa_\GG(\pi_i)\setminus\pi_j}) = P(X_{\pi_i} \mid \vx_{[\pi_1:\pi_{i-1}]\setminus(\pa_\GG(\pi_i)\cup\pi_j)}^\prime, \vx_{\pa_\GG(\pi_i)\setminus\pi_j}).
\]
Since all of the considered joint outcomes $\vx_{[\pi_1:\pi_{i-1}]\setminus\pa_\GG(\pi_i)}\vx_{\pa_\GG(\pi_i)}$, $\vx_{[\pi_1:\pi_{i-1}]\setminus\pa_\GG(\pi_i)}^\prime\vx_{\pa_\GG(\pi_i)}$ or $\vx_{[\pi_1:\pi_{i-1}]\setminus(\pa_\GG(\pi_i)\cup\pi_j)}\vx_{\pa_\GG(\pi_i)\setminus\pi_j}$, $\vx_{[\pi_1:\pi_{i-1}]\setminus(\pa_\GG(\pi_i)\cup\pi_j)}^\prime\vx_{\pa_\GG(\pi_i)\setminus\pi_j}$ is an element of $\mathcal{X}_{\pi_1:\pi_{i-1}}$, it follows that the relations defining the LDAG model $\MM(\GG,\mathcal{L})$ are simply a collection of conditional invariances that also define a staged tree model. 
Hence, for any LDAG $(\GG,\mathcal{L})$ there is a (stratified) staged tree $\mathcal{T}$ such that $\MM(\GG,\mathcal{L}) = \MM(\mathcal{T})$; i.e., $\mathbb{L}\subseteq\mathbb{S}$.

To see that this inclusion is strict, consider the staged tree with causal order $\pi = 123$, with a single non-singleton stage 
\[
\mathcal{S} = \{(0,0), (1,1)\}\subseteq\mathcal{X}_{\{1,2\}},
\]
which corresponding to the conditional invariance
\begin{equation}
\label{eqn: conditional invariance}
P(X_3 \mid X_1 = 0, X_2 = 0) = P(X_3 \mid X_1 = 1, X_2 = 1).
\end{equation}
Recall then that an LDAG encodes CSI relations $\vX_A \independent \vX_B \mid \vX_C, \vX_S = \vx_S$, which correspond to conditional invariances of a particular form; namely, 
\begin{equation}
    \label{eqn: CSI conditional invariance}
    P(\vX_A \mid \vx_B, \vx_C, \vx_S) = P(\vX_A \mid \vx_B^\prime, \vx_C, \vx_S)
\end{equation}
for each $\vx_C \in\mathcal{X}_C$ and all $\vx_B,\vx_B^\prime\in\mathcal{X}_B$. 
It is easy to see that the relation~\eqref{eqn: conditional invariance} is the only relation satisfied by all distributions in the model $\MM(\mathcal{T})$ (this can also be checked computationally via computer algebra software). 
Furthermore, there is no CSI relation that allows us to express the relation~\eqref{eqn: conditional invariance} in the form~\eqref{eqn: CSI conditional invariance}.
Thus, the model $\MM(\mathcal{T})$ cannot be expressed as an LDAG model, and we conclude that $\mathbb{L}\subsetneq \mathbb{S}$. 
\end{proof}

\begin{proof}[Proof of Lemma~\ref{lem: subcontexts}]
Suppose that $\vX_M = \xx_M\notin \mathcal{C}_\mathcal{T}$. 
Then, by definition of a minimal context, there must exist $\emptyset\neq T\subseteq M$ such that
\[
\vX_A \independent \vX_B \mid \vX_{C\cup T}, \vX_{M\setminus T} = \vx_{M\setminus T}\in\J(\mathcal{T}).
\]
Picking $T$ to be any maximal subset of $M$ with respect to this property yields $\vX_{M\setminus T} = \vx_{M\setminus T}\in\mathcal{C}_\mathcal{T}$, and hence $\vX_M = \vx_M$ has the minimal context $\vX_{M\setminus T} = \vx_{M\setminus T}$ as a subcontext.  

Applying this observation to the stage-defining relations 
\[
X_{\pi_i} \independent \vX_{[\pi_1:\pi_{i-1}]\setminus S} \mid \vX_S = \vx_S
\]
shows that any stage-defining context contains a minimal context.
\end{proof}

\begin{proof}[Proof of Theorem~\ref{thm: markov to context graphs}]
\label{proof: markov to context graphs}
We first show that (1) and (2) are equivalent. 
Suppose that $\vX\in\MM(\mathcal{T})$.  
Then $\vX$ entails all CSI relations 
\begin{equation}
\label{eqn: must entail}
X_{\pi_k} \independent \vX_{[\pi_1:\pi_{k-1}]\setminus S} \mid \vX_S = \vx_S\in\mathcal{D}_{\pi,k}
\end{equation}
for all $k\in[p]$. 
Hence, $\vX$ entails all relataions in the set $\mathcal{D}_\mathcal{T}$ defining $\MM(\mathcal{T})$.
Hence, $\vX$ satisfies all CSI relations in $\J(\mathcal{T})$, and therefore satisfies all CSI relations in $\J_{\vx_M}$ for all $\vx_M\in\mathcal{C}_\mathcal{T}$. 
Hence, $\vX$ satisfies the global Markov property with respect to $\mathcal{T}$. 

Conversely, suppose that $\vX\in\MM(\GG_\mathcal{T})$.  
Then for all $\vx_M\in\mathcal{C}_\mathcal{T}$, $\vX$ entails $\vX_A\independent \vX_B\mid \vX_C, \vX_M = \vx_M$ whenever $A$ and $B$ are d-separated given $C$ in the minimal I-MAP $\GG_{\vx_M}$ of $\J_{\vx_M}$.  
To see that $\vX$ factorizes according to $\mathcal{T}$, it suffices to show that, for all $k\in[p]$, $\vX$ entails each relation in~\eqref{eqn: must entail}.
By definition of $\J(\mathcal{T})$, each CSI relation \eqref{eqn: must entail} is in
$\J(\mathcal{T})$.  
So by Lemma~\ref{lem: subcontexts}, either $\vx_{S}\in\mathcal{C}_\mathcal{T}$
or there exists $M\subset S$ such that the restriction $\xx_M$ of $\vx_S$ is in $\mathcal{C}_\mathcal{T}$ 
and \eqref{eqn: must entail} is implied by specialization of
\begin{equation}
    \label{eqn: unspecial}
    X_{\pi_k} \independent \vX_{[\pi_1:\pi_{k-1}]\setminus S}\mid X_{S\setminus M},X_M=\vx_M\in\J_{\vx_M}.
\end{equation}
It remains to see that this relation is realized as a d-separation in $\GG_{\vx_M}$. 
By weak union,~\eqref{eqn: unspecial} implies that 
\begin{equation*}
    X_{\pi_k} \independent X_{\pi_j}\mid \vX_{[\pi_1:\pi_{k-1}]\setminus (S\cup\{\pi_j\})},\vX_{S\setminus M},X_M=\vx_M\in\J_{\vx_M}
\end{equation*}
for all $\pi_j\in[\pi_1:\pi_{k-1}]\setminus S$, and hence, the minimal I-MAP $\GG_{\vx_M}$ does not contain the edges $\pi_j\rightarrow \pi_k$ for all $\pi_j\in[\pi_1:\pi_{k-1}]\setminus S$.  
In particular, $\pi_k$ and $\pi_j$ are d-separated in $\GG_{\vx_M}$ given $[\pi_1:\pi_{k-1}]\setminus (S\cup\{\pi_j\}) \cup S\setminus M$ for all $\pi_j\in[\pi_1:\pi_{k-1}]\setminus S$.
Since d-separation in DAGs satisfies the intersection axiom \citep{SL14}, iterated application of this axiom shows that $\pi_k$ and $[\pi_1:\pi_{k-1}]\setminus S$ are d-separated given $S\setminus M$ in $\GG_{\vx_M}$.
Applying specialization, it follows that $\vX$ entails the CSI relations in \eqref{eqn: must entail}, and therefore lies in the model $\MM(\mathcal{T})$. 

We now show that (2) and (3) are equivalent. 
Let $\vx_M\in \mathcal{C}_\mathcal{T}$, and set $g(\vX_{[p]\setminus M}) := f(\vX_{[p]\setminus M} \mid \vX_M = \vx_M)$.  
Since $\vX$ is Markov to $\GG_\mathcal{T}$, whenever $A$ and $B$ are $d$-separated in $\GG_{\vx_M}$ given $C$, we have that $\vX$ entails $\vX_A \independent \vX_B \mid \vX_C, \vX_M = \xx_M$; or equivalently, 
\[
\frac{f(\vx_A, \vx_B, \vx_C, \vx_M)}{f(\vx_B, \vx_C, \vx_M)} = \frac{f(\vx_A,\vx_C, \vx_M)}{f(\vx_C, \vx_M)},
\]
for any $(\vx_A,\vx_B,\vx_C)\in\mathcal{X}_{A\cup B\cup C}$.
Since for any $\vx_{[p]\setminus M}\in\mathcal{X}_{[p]\setminus M}$,
\[
g(\vx_{[p]\setminus M}) = \frac{1}{f(\vx_M)}f(\vx_{[p]\setminus M}, \vx_M), 
\]
then 
\[
g(\vx_A,\vx_B,\vx_C) = \sum_{\yy\in\mathcal{X}_{[p]\setminus A\cup B \cup C\cup M}}\frac{f(\yy,\vx_A,\vx_B,\vx_C,\xx_M)}{f(\vx_M)} = \frac{1}{f(\vx_M)}f(\vx_A,\vx_B,\vx_C,\vx_M),
\]
and similarly for $g(\vx_B, \vx_C), g(\vx_A,\vx_C)$, and $g(\vx_C)$.  
Hence, whenever $A$ and $B$ are $d$-separated in $\GG_{\vx_M}$ given $C$, we have that 
\[
\frac{g(\vx_A, \vx_B, \vx_C)}{g(\vx_B, \vx_C)} = \frac{g(\vx_A,\vx_C)}{g(\vx_C)}.  
\]
Therefore, $g$ entails $\vX_A \independent \vX_B \mid \vX_C$ whenever $A$ and $B$ are $d$-separated given $C$ in $\GG_{\vx_M}$ for all $\vx_M\in\mathcal{C}_\mathcal{T}$ if and only if $\vX\in\MM(\GG_\mathcal{T})$.  
It follows that $\vX$ satisfies the global Markov property with respect to $\mathcal{T}$ if and only if for all $\vx_M\in\mathcal{C}_\mathcal{T}$,
\begin{equation*}
\begin{split}
f(\vX_{[p]\setminus M} \mid \vX_M = \vx_M) = g(\vX_{[p]\setminus M}) &= \prod_{k\in[p]\setminus M}g(X_k\mid \vX_{\pa_{\GG_{\vx_M}}(k)}),\\
&= \prod_{k\in[p]\setminus M}\frac{g(X_k, \vX_{\pa_{\GG_{\vx_M}}(k)})}{g(\vX_{\pa_{\GG_{\vx_M}}(k)})},\\
&= \prod_{k\in[p]\setminus M}\frac{f(X_k, \vX_{\pa_{\GG_{\vx_M}}(k)},\vX_M = \vx_M)}{f(\vX_{\pa_{\GG_{\vx_M}}(k)}, \vX_M = \vx_M)},\\
&= \prod_{k\in[p]\setminus M}f(X_k\mid \vX_{\pa_{\GG_{\vx_M}}(k)}, \vX_M = \vx_M),\\
\end{split}
\end{equation*}
which completes the proof.
\end{proof}

\begin{proof}[Proof of Lemma~\ref{lem: equal minimal contexts}]
\label{proof: equal minimal contexts}
Suppose, for the sake of contradiction, that there exists $\vx_M\in\mathcal{C}_\mathcal{T}$ such that $\vx_M\notin\mathcal{C}_{\mathcal{T}^\prime}$. 
Then, by definition of the set $\mathcal{C}_\mathcal{T}$ and Lemma~\ref{lem: subcontexts}, there must exist a CSI relation $\vX_A\independent \vX_B\mid \vX_C, \vX_M = \vx_M$ in $\mathcal{J}(\mathcal{T})$ that is not implied by specialization of any statement $\vX_A\independent \vX_B\mid \vX_{C\cup T},\vX_{M\setminus T} = \vx_{M\setminus T}$.  
Since $\vx_M\notin\mathcal{C}_{\mathcal{T}^\prime}$, it follows that either $\vX_A\independent \vX_B\mid \vX_C, \vX_M = \vx_M$ is not a CSI relation in $\mathcal{J}(\mathcal{T}^\prime)$ or there exists some subcontext $X_{M\setminus T} = \vx_{M\setminus T}\in\mathcal{C}_{\mathcal{T}^\prime}$ such that the statement $\vX_A\independent \vX_B\mid \vX_C, \vX_M = \vx_M$ is implied by the statement $\vX_A\independent \vX_B\mid \vX_{C\cup T}, \vX_{M\setminus T} = \vx_{M\setminus T}$ encoded by $\GG_{\vx_{M\setminus T}}^\prime\in\GG_{\mathcal{T}^\prime}$. 
It follows from Theorem~\ref{thm: markov to context graphs} that the statement $\vX_A\independent \vX_B\mid \vX_{C\cup T}, \vX_{M\setminus T} = \vx_{M\setminus T}$ is in  $\mathcal{J}(\mathcal{T}^\prime)$.  
However, such a statement cannot be in $\mathcal{J}(\mathcal{T})$, as this would imply that the statement $\vX_A\independent \vX_B\mid \vX_{C},X_{M} = \vx_{M}$ is obtained by specialization from a statement of the form $\vX_A\independent \vX_B\mid \vX_{C\cup T}, \vX_{M\setminus T} = \vx_{M\setminus T}$  in $\mathcal{J}(\mathcal{T})$. 
This latter fact would contradict our initial assumption that $\vx_{M}$ is a minimal context.  
Hence, we may assume that there is a minimal context $\vx_M$, such that the CSI relation $\vX_A\independent \vX_B\mid \vX_C, \vX_M = \vx_M$
is in $\mathcal{J}(\mathcal{T})$ but not in $\mathcal{J}(\mathcal{T}')$. 

We now note that, by \cite{DG20}, the model $\MM(\mathcal{T})$ is equal to an irreducible algebraic variety intersected with the probability simplex. 
That is, $\MM(\mathcal{T})=V(P_{\mathcal{T}})\cap \Delta_{|\mathcal{X}|-1}^{\circ}$
where $P_{\mathcal{T}}$ is a prime ideal in a polynomial ring and $V(P_{\mathcal{T}})$ is the set of all points in $\C^{|\mathcal{X}|}$ that vanish on the polynomials in $P_{\mathcal{T}}$. 
The same holds for $\mathcal{T}'$, $\MM(\mathcal{T}^\prime)=V(P_{\mathcal{T}^\prime})\cap \Delta_{|\mathcal{X}|-1}^{\circ}$. 
Since $\MM(\mathcal{T})=\MM(\mathcal{T}^\prime)$, it follows that their closures with respect to the Zariski topology are equal, namely $V(P_{\mathcal{T}})=V(P_{\mathcal{T}'})$
and hence $P_{\mathcal{T}}=P_{\mathcal{T}^\prime}$. 
In particular, every equation that is satisfied by every distribution in $\MM(\mathcal{T})$ is also an equation
satisfied by every distribution in $\MM(\mathcal{T}')$. 
Since $\vX_A\independent \vX_B\mid \vX_C, \vX_M = \vx_M$ is in $\mathcal{J}(\mathcal{T})$, then every distribution in $\MM(\mathcal{T})$ satisfies $\vX_A\independent \vX_B\mid \vX_C, \vX_M = \vx_M$. 
By \citep[Proposition 4.1.6]{S19} restricted to the context-specific setting, 
there is a set of polynomials associated to the statement $\vX_A\independent \vX_B\mid \vX_C, \vX_M = \vx_M$, which we denote by $I_{A\independent B \mid C, \vX_M=\vx_M}$,  that vanish at 
every distribution in $\MM(\mathcal{T})$.
In particular, $I_{A\independent B \mid C, \vX_M=\vx_M}\subset P_{\mathcal{T}}=P_{\mathcal{T}'}$. 
Hence every  polynomial in $I_{A\independent B \mid C, \vX_M=\vx_M}$ vanishes at every distribution in $\MM(\mathcal{T}^\prime)$,
which means that every distribution in $\MM(\mathcal{T}^\prime)$
satisfies the statement $\vX_A\independent \vX_B\mid \vX_C, \vX_M = \vx_M$. But this implies $\vX_A\independent \vX_B\mid \vX_C, \vX_M = \vx_M \in J(\mathcal{T}^\prime)$, a contradiction. 
Hence, $\MM(\mathcal{T})$ and $\MM(\mathcal{T}')$ have the same set of minimal contexts.
\end{proof}

\begin{proof}[Proof of Theorem~\ref{thm: first characterization}]
\label{proof: first characterization}
Suppose $\mathcal{T}$ and $\mathcal{T}^\prime$ are Markov equivalent.
By Lemma~\ref{lem: equal minimal contexts}, it follows that $\mathcal{C}:=\mathcal{C}_\mathcal{T} = \mathcal{C}_{\mathcal{T}^\prime}$.
So we need to show that for all $\vx_M\in\mathcal{C}$, for any disjoint subsets $A,B,C\subset[p]\setminus M$ with $A,B\neq\emptyset$, that $A$ and $B$ are d-separated given $C$ in $\GG_{\vx_M}$ if and only if $A$ and $B$ are d-separated given $C$ in $\GG_{\vx_M}^\prime$.
\bigskip

For the sake of contradiction, suppose that $A$ and $B$ are d-separated given $C$ in $\GG_{\vx_M}$ but $d$-connected given $C$ in $\GG_{\vx_M}^\prime$.  
Let $\pi = \langle i_0,\ldots, i_M\rangle$ be a d-connecting path between $i_0\in A$ and $i_M\in B$ given $C$ in $\GG_{\vx_M}^\prime$.  
Let $\overline{\GG}\subset\GG_{\vx_M}^\prime$ be the subgraph of $\GG_{\vx_M}^\prime$ consisting of all nodes and edges on $\pi$ together with all nodes and edges on a directed path from any node $i_j$ of $\pi$ that is the center of a collider subpath of $\pi$ to a node in $S$.  
Suppose also that any remaining nodes of $S$ not captured in the above paths are included in $\overline{G}$ as isolated nodes.   
Let $V$ denote the set of nodes of $\overline{G}$.  
By \cite[Lemma 12]{M13}, there exists a discrete distribution $\vX_V$ that is Markov to $\overline{\GG}$ for which $X_{i_0} \not\independent X_{i_M} \mid X_S$ holds in $\vX_V$.  
As $\overline{\GG}$ is a subDAG of $\GG_{X_C = x_C}^\prime$, it follows that the subword of the causal ordering of $\mathcal{T}^\prime$ on the elements of $V$ is a linear extension of $\overline{\GG}$.  
Hence, we can factor $\vX_V$ according to a subtree of $\mathcal{T}^\prime$ in the following way: 

Let $P_V(x) = \prod_{i\in V}P_V(x_i \mid x_{\pa_{\overline{\GG}}(i)})$ be the probability mass function for $\vX_V$.  
For each $i\in V$, consider its associated level (without loss of generality, level $i$) in the tree $\mathcal{T}^\prime$.  
Let $\xx = (x_1,\ldots,x_p)\in\mathcal{X}$ be any outcome of $(X_1,\ldots,X_p)$ that includes the context $\vx_M$.  
For every $i\in V$, it follows that the root-to-leaf path in $\mathcal{T}^\prime$ corresponding to the outcome $\xx$ passes through exactly one stage in level $i$.  
We assign the parameters on the edges emanating from nodes in this the stage value $P_V(x_i \mid \xx_{\pa_{\overline{\GG}}(i)})$, for all $x_i\in\mathcal{X}_{\{i\}}$.  
We then take a generic sequence of numbers $\alpha_1^{(i)},\ldots,\alpha_{|\RR_{\{i\}}|}^{(i)}\in(0,1)$ that sum to one and assign these values to all other parameters on any edge emanating from level $i$, one parameter to each edge of every floret (always assigned in the same order for every floret, say top-to-bottom).
Here, a \emph{floret} is the set of edges emanating out of a single node.
In particular, we let $\alpha_{x_i}^{(i)}$ be the element in this sequence that is always assigned to the edge emanating from the floret that corresponds to the outcome $x_i$ of $X_i$.  
We similarly assign parameters to all edges on levels of $\mathcal{T}^\prime$ corresponding to variables $X_i$ with $i\notin V$.
It follows that such a specification of parameters factors according to $\mathcal{T}^\prime$, and hence specifies a distribution $\vX\in\MM(\mathcal{T}^\prime)$ with mass function $P$.  
(This is because the assignment of parameters we have made corresponds to a staging of a tree with the same causal ordering as $\mathcal{T}^\prime$ whose stages are a coarsening of those in $\mathcal{T}^\prime$.)
Moreover, for every $\xx_V\in\mathcal{X}_V$ we have that $P(\vx_V\mid \vx_C)=P_V(\vx_V)$. 
It follows that $X_{i_0}\not\independent X_{i_M}\mid \vX_C, \vX_M = \vx_M$ holds in $\vX$.  
As $\MM(\mathcal{T}) = \MM(\mathcal{T}^\prime)$, and $\vX$ factors according to $\mathcal{T}^\prime$, it must be that $\vX$ also factors according to $\mathcal{T}$.  
By Theorem~\ref{thm: markov to context graphs}, we know that $\vX\in\MM(\GG_\mathcal{T})$.  
Hence, as $\vx_M\in\mathcal{C}_\mathcal{T}$, and $A$ and $B$ are d-separated given $C$ in $\GG_{\vx_M}$, then it must be that $i_0$ and $i_M$ are also d-separated given $C$ in $\GG_{\vx_M}$.  
Hence, as $\vX\in \MM(\GG_\mathcal{T})$, it must be that $\vX$ entails $X_{i_0}\independent X_{i_M}\mid \vX_C, \vX_M = \vx_M$, which is a contradiction.  
Thus, we conclude that $A$ and $B$ are d-separated given $C$ in $\GG_{\vx_M}$ if and only if $A$ and $B$ are d-separated given $C$ in $\GG_{\vx_M}^\prime$.
Hence, $\GG_{\vx_M}$ and $\GG_{\vx_M}^\prime$ are Markov equivalent for all $\vx_M\in\mathcal{C}$.  

Suppose now that $\GG_{\vx_M}$ and $\GG_{\vx_M}^\prime$ are Markov equivalent for all $\vx_M\in\mathcal{C}$.  
We need to show that $\MM(\mathcal{T}) = \MM(\mathcal{T}^\prime)$.
If $\vX\in\MM(\mathcal{T})$, by Theorem~\ref{thm: markov to context graphs}, $\vX\in\MM(\GG_\mathcal{T})$ and hence entails $\vX_A\independent \vX_B \mid \vX_C,\vX_M = \vx_M$ whenever $A$ and $B$ are d-separated given $C$ in $\GG_{\vx_M}$.  
As $\GG_{\vx_M}$ and $\GG_{\vx_M}^\prime$ are Markov equivalent, it follows that $A$ and $B$ are d-separated given $C$ in $\GG_{\vx_M}$ if and only if $A$ and $B$ are d-separated given $C$ in $\GG_{\vx_M}^\prime$.
Hence, $\vX\in \MM(\GG_{\mathcal{T}^\prime})$.  
Again by Theorem~\ref{thm: markov to context graphs}, it follows that $\vX\in\MM(\mathcal{T}^\prime)$.  
By symmetry of this argument it follows that $\MM(\mathcal{T}) = \MM(\mathcal{T}^\prime)$, which completes the proof.
\end{proof}

\section{Proof for Section~\ref{sec:interventionalcstrees}}
\label{appsec: interventional proofs}

\begin{proof}[Proof of Lemma~\ref{lem: characterizing interventional settings}]
\label{proof:  characterizing interventional settings}
Let $(\vX^{I})_{I\in\ci}$ be a collection of distributions indexed by $\ci$ containing $I_0 = \emptyset$.
Suppose first that there exists $\vX^{0}\in\MM_\mathcal{T}$ such that for all $I\in \ci$ the distribution $\vX^{I}$ factorizes as in~\eqref{eqn:I-CStreefactorization} with respect to $\vX^{0}$. 
It follows that $\vX^{I}\in\MM_\mathcal{T}$ for all $I\in\ci$.  
So it remains to show that 
\[
P^{I}(x_i \mid \vx_S) = P^{J}(x_i \mid \vx_S)
\]
for all outcomes $\vx\in\mathcal{X}$ and all stages $\mathcal{S}_{\pi,i}(\vx_S)$ whenever $\vx_S\notin I\cup J$.
However, since $\vX^{I}$ factorizes as in~\eqref{eqn:I-CStreefactorization}, we have that 
\[
P^{I}(x_i \mid \vx_S) = P^{I_0}(x_i \mid \vx_S)
\]
for all outcomes $\vx\in\mathcal{X}$ and all stages $\mathcal{S}_{\pi,i}(\vx_S)$ whenever $\vx_S\notin I$.
Hence, for $I,J\in \ci$, it must be that
\[
P^{I}(x_i \mid \vx_S) = P^{I_0}(x_i \mid \vx_S) = P^{J}(x_i \mid \vx_S)
\]
for all outcomes $\vx\in\mathcal{X}$ and all stages $\mathcal{S}_{\pi,i}(\vx_S)$ whenever $\vx_S\notin I_k\cup I_{k^\prime}$.
Thus, $(\vX^{I})_{I\in\ci}\in \MM(\mathcal{T},\ci)$.

Conversely, Let $(\vX^0,\ldots, \vX^K)\in\MM(\mathcal{T},\ci)$. 
Since $I_0 = \emptyset$, the result follows immediately.     
\end{proof}

\begin{proof}[Proof of Theorem~\ref{prop: interventional global markov}]
\label{proof: interventional global markov}

Suppose first that $(\vX^{I})_{I\in\ci}$ satisfies the context-specific $\ci$-Markov property with respect to $\mathcal{T}^\ci$.  
By Lemma~\ref{lem: characterizing interventional settings}, it suffices to show that each $\vX^{I}$, for $I\in\ci$, factorizes as in~\eqref{eqn:I-CStreefactorization}.  
Without loss of generality, we assume that the causal order of $\mathcal{T}$ is $\pi = 12\cdots p$. 
Since $(\vX^{I})_{I\in\ci}$ satisfies the context-specific $\ci$-Markov property with respect to $\mathcal{T}^\ci$, then we know that each $\vX^{I}$ satisfies the global Markov property with respect to $\mathcal{T}$.
So by Theorem~\ref{thm: markov to context graphs}, each $\vX^{I}$ is in $\MM(\mathcal{T})$.
Hence, 
\[
P^I(\vx) = \prod_{i=1}^pP^I(x_i \mid \vx_{\pa_{\mathcal{T}}(\vx_{\pi_1:\pi_{i-1}})}) \qquad \mbox{for all outcomes $\vx = (x_1,\ldots, x_p)$.}
\]
Fix $I\in \ci$, $i\in[p]$ and an outcome $\vx\in\mathcal{X}$.
Then either $\vx_{[i-1]}\in \mathcal{S}_{\pi,i}(\vx_S)$ for some $\vx_S\in I$, or it is not. 
To prove that $\vX^I$ factorizes as in~\eqref{eqn:I-CStreefactorization}, we need to show the invariance $P^I(x_i \mid \vx_{\pa_{\mathcal{T}}(\vx_{\pi_1:\pi_{i-1}})}) = P^{I_0}(x_i \mid \vx_{\pa_{\mathcal{T}}(\vx_{\pi_1:\pi_{i-1}})})$ holds for $\vx$ in the latter case.
Suppose then that $\vx_{[i-1]}\notin \mathcal{S}_{\pi,i}(\vx_S)$ for any $\vx_S\in I$.
Then $\vx_{[i-1]}$ does not contain any $\vx_S\in I$ as a subcontext.  
We further claim that if $\vx_{[i-1]}\notin \mathcal{S}_{\pi,i}(\vx_S)$ for any $\vx_S\in I$ then $w_{I}$ is d-separated from $i$ given $\pa_{\GG_{\vx_M}}(i)\cup W_\ci\setminus\{w_{I}\}$ in $\GG_{\vx_M}^\ci$ for all $\vx_M\in\mathcal{C}_\mathcal{T}$.  
To see this, we consider first the case of $\GG_{\vx_M}^\ci$ for $\vx_M \neq \vx_\emptyset$.

Note that every minimal context $\vx_M \neq \vx_\emptyset$ is either a subcontext of a stage-defining context or not. 
If a minimal context $\vx_M$ is not a subcontext of any stage-defining context $\vx_S$, then by definition of $\GG_{\vx_M}^\ci$, there will be no edge $\omega_I\rightarrow i$ in $\GG_{\vx_M}^\ci$. 
Since $\vx_{[i-1]}$ does not contain any $\vx_S\in I$, then any minimal context $\vx_M$ that is a subcontext of $\vx_S$ could contain an arrow $\omega_I\rightarrow i$ only if there is another outcome $\yy$ with $\yy_{[i-1]}$ having $\vx_{S^\prime}^\prime\in I$ as a subcontext.
However, since $I$ is CS-complete and $\vx_M\neq \vx_\emptyset$, no $\yy_{[i-1]}$ with this property can exist. 
Hence, if $\vx_{[i-1]}\notin \mathcal{S}_{\pi,i}(\vx_S)$ for any $\vx_S\in I$
then there is no minimal context graph $\GG_{\vx_M}^\ci$ with $\vx_M \neq \vx_\emptyset$ containing the edge $\omega_{I}\rightarrow i$. 
Hence, we know that the vertex $w_{I}$ is d-separated from $i$ given $\pa_{\GG_{\vx_M}}(i)\cup W_\ci\setminus\{w_{I}\}$ in $\GG_{\vx_M}^\ci$ for all $\vx_M\in\mathcal{C}_\mathcal{T}\setminus \{\vx_\emptyset\}$.

It remains to prove the claim when $\vx_M = \vx_\emptyset$.
Notice first that since the edge $\omega_I \rightarrow i$ does not appear in any $\GG_{\vx_M}^\ci$ for $\vx_M \neq \vx_{\emptyset}$, then if this edge occurs in $\GG_{\vx_\emptyset}^\ci$, the targeted stage $\vx_S\in I$ only has the minimal context $\vx_\emptyset$ as a subcontext.
However, since $I$ is CS-complete, this would imply that $\mathbf{s}_i\subseteq I$, contradicting the assumption that $\vx_{[i-1]}\notin \mathcal{S}_{\pi,i}(\vx_S)$ for any $\vx_S\in I$.
Thus, if $\vx_{[i-1]}\notin \mathcal{S}_{\pi,i}(\vx_S)$ for any $\vx_S\in I$ then $w_{I}$ is d-separated from $i$ given $\pa_{\GG_{\vx_M}}(i)\cup W_\ci\setminus\{w_{I}\}$ in $\GG_{\vx_M}^\ci$ for all $\vx_M\in\mathcal{C}_\mathcal{T}$.

Now, to extract the required conditional invariances with the help of this observed d-separation in all $\GG_{\vx_M}^\ci$, let $\vx_M\in\mathcal{C}_\mathcal{T}$ be a minimal context that is a subcontext of $\vx_{[i-1]}$.  Then, 
\begin{equation}
\label{eqn: proven invariance}
\begin{split}
P^{I}(x_i \mid \vx_{[i-1]}) &= P^{I}(x_i \mid \vx_{\pa_{\GG_{\vx_M}}(i)},\vx_M), \\
&= P^{I_0}(x_i \mid \vx_{\pa_{\GG_{\vx_M}}(i)},\vx_M), \\
&= P^{I_0}(x_i \mid \vx_{[i-1]}).
\end{split}
\end{equation}
Here, the first equality follows from the fact that $\vX^{I}$ satisfies the global Markov with respect to $\mathcal{T}$ (specifically, $X_i$ will be independent of every variable $X_j$ with $j \in [i-1]\setminus (M\cup \pa_{\GG_{\vx_M}}(i))$ given its parents $\pa_{\GG_{\vx_M}}(i)$ in the minimal I-MAP $\GG_{\vx_M}$).  
The second equality follows from Definition~\ref{def: context-specific I-Markov property}~(2) and the observed d-separation.
The third inequality, analogous to the first, follows from the fact that $\vX^{I_0}$ satisfies the global Markov property with respect to $\mathcal{T}$. 


Finally, since $\vX^{I_0}$ and $\vX^{I}$ both satisfy the global Markov property with respect to $\mathcal{T}$, it follows from Theorem~\ref{thm: markov to context graphs} that these distributions are in $\MM(\mathcal{T})$ and thus factorize as in~\eqref{eqn:CStreefactorization}.  
Thus, we have the equalities
\[
P^{I}(x_i \mid \vx_{[i-1]}) = P^{I}(x_i \mid \vx_{\pa_{\mathcal{T}}(\vx_{[i-1]})})
\qquad 
\mbox{and}
\qquad
P^{I_0}(x_i \mid \vx_{[i-1]}) = P^{I_0}(x_i \mid \vx_{\pa_{\mathcal{T}}(\vx_{[i-1]})}).
\]
Combining these equalities with the equality in~\eqref{eqn: proven invariance}, we obtain that $\vX^{I}$ factorizes as in~\eqref{eqn:I-CStreefactorization}.
Hence, by Lemma~\ref{lem: characterizing interventional settings}, we obtain that $(\vX^{I})_{I\in\ci}\in\MM(\mathcal{T},\ci)$. 

Conversely, suppose that $(\vX^{I})_{I\in\ci}\in\MM_\ci(\mathcal{T})$. 
It follows that $\vX^{I}\in\MM_\mathcal{T}$ for each $I\in\ci$.  
By Theorem~\ref{thm: markov to context graphs}, we then have that $\vX^{I}$ is Markov to $\GG_\mathcal{T}$.  
Hence, for all $\vx_M\in\mathcal{C}_\mathcal{T}$, the distribution $\vX^{I}$ entails $\vX_A\independent \vX_B \mid \vX_C, \vX_M = \vx_M$ whenever $A$ and $B$ are $d$-separated given $C$ in $\GG_{\vx_M}$.  

To see that condition (2) of Definition~\ref{def: context-specific I-Markov property} holds, fix $I\in \ci$ and let $A,C\subset[p]\setminus M$ be disjoint subsets for which $C\cup W_{\ci}\setminus\{\omega_I\}$ $d$-separates $A$ and $\omega_I$ in $\GG_{\vx_M}^\ci$. 
Let $V$ denote the ancestral closure of $A\cup C$ in $\GG_{\vx_M}$.  
Let $B^\prime \subset V$ denote the set of all nodes in $V$ that are $d$-connected to $\omega_I$ given $C\cup W_{\ci}\setminus\{\omega_I\}$ in $\GG_{\vx_M}^\ci$, and set $A^\prime := V\setminus B^\prime\cup C$. 
By applying Theorem~\ref{thm: markov to context graphs}~(3) and Lemma~\ref{lem: characterizing interventional settings}, the remainder of the proof follows exactly as in the proof of \citep[Proposition 3.8]{YKU18}.
This is because, condition~(2) of Definition~\ref{def: context-specific I-Markov property} is an invariance condition on conditional distributions within the distributions $P^{I_0}(\vX_{[p]\setminus M} \mid \vX_M = \vx_M)$ and $P^{I_k}(\vX_{[p]\setminus M} \mid \vX_M = \vx_M)$. 
Theorem~\ref{thm: markov to context graphs}~(3) states that considering such conditional distributions for $\vX^{I_0},\vX^{I_k}\in \MM(\mathcal{T})$ amounts to working with the distributions $\vX_{[p]\setminus M}^{I_0} \mid \vX_M = \vx_M$ and $\vX_{[p]\setminus M}^{I_k} \mid \vX_M = \vx_M$, which are both in $\MM(\GG_{\vx_M})$. 
Hence, we have reduced the problem to proving certain invariances hold in a DAG model, which is exactly treated by the proof of \citep[Proposition 3.8]{YKU18}. 
\end{proof}

\begin{proof}[Proof of Theorem~\ref{thm: interventional equivalence characterization}]
\label{proof: interventional equivalence characterization}
Suppose first that $\GG_\mathcal{T}^\ci$ and $\GG_{\widetilde{\mathcal{T}}}^{\widetilde{\ci}}$ have the same skeleton and v-structures. 
Then, $\ci$ and $\widetilde{\ci}$ are compatible and
there exists a bijection $\Phi:\ci\to\widetilde{\ci}$ for which they have the same set of $d$-separations.
Hence, by Theorem~\ref{prop: interventional global markov}, $\MM(\mathcal{T},\ci) = \MM(\widetilde{\mathcal{T}},\widetilde{\ci})$

Inversely, suppose that $\GG_\mathcal{T}^\ci$ and $\GG_{\widetilde{\mathcal{T}}}^{\widetilde{\ci}}$ do not have the same skeleton and v-structures.  
Then either
\begin{enumerate}
	\item $\mathcal{C}_\mathcal{T}\neq \mathcal{C}_{\widetilde{\mathcal{T}}}$,
	\item $\mathcal{C}_\mathcal{T} = \mathcal{C}_{\widetilde{\mathcal{T}}}$, but $\ci$ and $\widetilde{\ci}$ are not compatible, 
	\item $\mathcal{C}_\mathcal{T} = \mathcal{C}_{\widetilde{\mathcal{T}}}$ and $\ci$ and $\widetilde{\ci}$ are compatible, but there is some $\vx_M\in\mathcal{C}_\mathcal{T}$ such that $\GG_{\vx_M}\in\GG_\mathcal{T}$  and $\widetilde{\GG}_{\vx_M}\in\GG_{\widetilde{\mathcal{T}}}$ do not have the same skeleton and v-structures, or
	\item $\mathcal{C}_\mathcal{T} = \mathcal{C}_{\widetilde{\mathcal{T}}}$, $\ci$ and $\widetilde{\ci}$ are compatible via a bijection $\Phi: \ci\longrightarrow \widetilde{\ci}$, but there exists $\vx_M\in\mathcal{C}_\mathcal{T}$ and a node $\omega_{I^\ast}$ in $\GG_{\vx_M}^\ci$ for which there is some $j\in[p]\setminus M$ such that $\omega_{I^\ast}\rightarrow j$ is part of a v-structure in $\GG_{\vx_M}^\ci$ but $\omega_{\Phi(I)}\rightarrow j$ is not part of a v-structure in $\widetilde{\GG}_{\vx_M}^{\widetilde{\ci}}$. 
\end{enumerate}

For case (1), note that, by Theorem~\ref{thm: first characterization}, since $\mathcal{C}_{\mathcal{T}}\neq \mathcal{C}_{\widetilde{\mathcal{T}}}$,  there exists a distribution $\vX\in \MM(\mathcal{T})$ such that $\vX\notin \MM(\widetilde{\mathcal{T}})$. 
Then the interventional setting $(\vX^{I})_{I\in\ci}$ where $\vX^{I}:=\vX$ for all $I\in \ci$ is an element of $\MM(\mathcal{T}, \ci)$ but not of $\MM(\widetilde{\mathcal{T}},\widetilde{\ci})$. 

For case (2), suppose that $\mathcal{C}_\mathcal{T} = \mathcal{C}_{\widetilde{\mathcal{T}}}$ but $\ci$ and $\widetilde{\ci}$ are not compatible. 
Suppose first that $|\ci|\neq|\widetilde{\ci}|$.  
Without loss of generality, we assume $|\ci|<|\widetilde{\ci}|$. 
Then, given any $\vX^{\emptyset}\in\MM(\widetilde{\mathcal{T}})$, we know $(\vX^{I})_{I\in\ci}\in\MM(\widetilde{\mathcal{T}},\widetilde{\ci})$, where $\vX^{I} := \vX^{\emptyset}$ for all $I\in\widetilde{\ci}$.  
However, no sequence of distributions of this length can possibly be in $\MM(\mathcal{T},\ci)$.  
Hence, $\MM(\mathcal{T},\ci)\neq\MM(\widetilde{\mathcal{T}},\widetilde{\ci})$. 

On the other hand, suppose that $|\ci| = |\widetilde{\ci}|$.  
Since $\ci$ and $\widetilde{\ci}$ are not compatible, then there is no relabeling of $\widetilde{\ci}$ such that each $\omega_I$ has the same children in $\widetilde{\GG}_{\vx_M}$ as it does in $\GG_{\vx_M}$ for all $\vx_M\in\mathcal{C}_\mathcal{T}$.  
Hence, without loss of generality, given any relabeling of $\widetilde{\ci}$ according to a bijection $\Phi: \ci\rightarrow \widetilde{\ci}$, there is some $\vx_M\in\mathcal{C}_\mathcal{T}$ for which there is an $I^\ast\in\ci$ and $k\in[p]\setminus M$ such that $\omega_{I^\ast}\rightarrow k$ is an edge of $\GG_{\vx_M}^\ci$ but $\omega_{\Phi(I^\ast)}\rightarrow k$ is not an edge of $\widetilde{\GG}_{\vx_M}^{\widetilde{\ci}}$. 
Thus, for any relabeling of $\widetilde{\ci}$ via a bijection $\Phi: \ci\longrightarrow\widetilde{\ci}$ there is a context $\vx_M\in\mathcal{C}_\mathcal{T}$ for which there is $I^\ast\in\ci$ and $k\in[p]\setminus M$ such that $\omega_{I^\ast}$ is d-connected to $k$ given $S:=\pa_{\widetilde{\GG}_{\vx_C}^{\widetilde{\ci}}}(k)$ in $\GG_{\vx_M}^\ci$ and d-separated from $k$ given $S$ in $\widetilde{\GG}_{\vx_M}^{\widetilde{\ci}}$.  
From \citet[Section 3.1]{DS20} applied to $\widetilde{\GG}_{\vx_M}^{\widetilde{\ci}}$, the d-separation statement that holds in $\widetilde{\GG}_{\vx_M}^{\widetilde{\ci}}$ translates into a set of polynomials 
$\mathrm{Inv}_{\widetilde{\GG}_{\vx_M}^{\widetilde{\ci}}}$ that vanish when evaluated at the points in 
$\MM(\widetilde{\mathcal{T}},\widetilde{\ci})$. 
Using these polynomials, we show that 
$\MM(\mathcal{T},\ci) \neq \MM(\widetilde{\mathcal{T}},\widetilde{\ci})$. 

Suppose by way of contradiction  that $\MM(\mathcal{T},\ci) = \MM(\widetilde{\mathcal{T}},\widetilde{\ci})$.
By \citet[Theorem 4.3]{DS20}, $\MM(\mathcal{T},\ci)$ is an irreducible variety intersected with a product of open
probability simplices, $\MM(\mathcal{T},\ci)=V(P_{(\mathcal{T},\ci)})\cap \Delta_{|\mathcal{X}|-1}^{\circ, (0)}\times \cdots \times \Delta_{|\mathcal{X}|-1}^{\circ,(K)} $, where $P_{(\mathcal{T},\ci)}$ is  a prime ideal in a polynomial ring and
$V(P_{(\mathcal{T},\ci)})$  is the set of all points in $\C^{(K+1)|\mathcal{X}|}$ that evaluate to zero at the elements
of $P_{(\mathcal{T},\ci)}$. 
The same holds for $\MM(\widetilde{\mathcal{T}},\widetilde{\ci})$, 
$\MM(\widetilde{\mathcal{T}},\widetilde{\ci})=V(P_{(\widetilde{\mathcal{T}},\widetilde{\ci})})\cap \Delta_{|\mathcal{X}|-1}^{\circ, (0)}\times \cdots \times \Delta_{|\mathcal{X}|-1}^{\circ,(K)} $. 
By a similar argument as in the proof of Lemma~\ref{lem: equal minimal contexts}, $P_{(\mathcal{T},\ci)}=P_{(\widetilde{\mathcal{T}},\widetilde{\ci})}$.
In particular, every equation satisfied by an interventional setting in $\MM(\mathcal{T},\ci)$ must be satisfied by every interventional setting in $\MM(\widetilde{\mathcal{T}},\widetilde{\ci})$.
Thus every polynomial in $\mathrm{Inv}_{\widetilde{\GG}_{\vx_M}^{\widetilde{\ci}}}$ vanishes
at every element in $\MM(\mathcal{T},\ci)$. 
This means that  $\omega_{I^\ast}$ is d-separated from $k$ given $S$ in $\GG_{\vx_M}^\ci$, a contradiction. 
Hence $\MM(\mathcal{T},\ci) \neq \MM(\widetilde{\mathcal{T}},\widetilde{\ci})$.

For case (3), suppose that $\mathcal{C}_\mathcal{T} = \mathcal{C}_{\widetilde{\mathcal{T}}}$ and $\ci$ and $\widetilde{\ci}$ are compatible.  
Hence, we may relabel $\widetilde{\ci}$ so that all nodes in $\GG_{\widetilde{\mathcal{T}}}^{\widetilde{\ci}}$ and $\GG_\mathcal{T}^\ci$ have the same labels and, after this relabeling, all nodes $\omega_I$ have the same children in $\GG_{\vx_M}^\ci$ and $\widetilde{\GG}_{\vx_M}^{\widetilde{\ci}}$ for all $\vx_M$.  
Suppose now that there exists $\vx_M\in\mathcal{C}_\mathcal{T}$ such that $\GG_{\vx_M}\in\GG_\mathcal{T}$ and $\widetilde{\GG}_{\vx_M}\in\GG_{\widetilde{\mathcal{T}}}$ do not have the same skeleton and v-structures.  
In this case, it follows from Theorem~\ref{thm: first characterization} that $\mathcal{T}$ and $\widetilde{\mathcal{T}}$ are not Markov equivalent.  
That is, $\MM(\mathcal{T})\neq\MM(\widetilde{\mathcal{T}})$.  
So, without loss of generality, there exists $\vX\in\MM(\mathcal{T})$ such that $\vX\notin\MM(\widetilde{\mathcal{T}})$.  
By setting $\vX^{I}:=\vX$ for all $I\in\ci$, we produce $(\vX^{I})_{I\in\ci}$ that is in $\MM(\mathcal{T},\ci)$ but not in $\MM(\widetilde{\mathcal{T}},\widetilde{\ci})$.  
Hence, $\MM(\mathcal{T},\ci)\neq\MM(\widetilde{\mathcal{T}},\widetilde{\ci})$. 

Finally, in case (4), we assume that $\mathcal{C}_\mathcal{T} = \mathcal{C}_{\widetilde{\mathcal{T}}}$ and $\ci$ and $\widetilde{\ci}$ are compatible.  
As in case (3), these assumptions ensure that we may relabel $\widetilde{\ci}$ so that all nodes in $\GG_{\widetilde{\mathcal{T}}}^{\widetilde{\ci}}$ and $\GG_\mathcal{T}^\ci$ have the same labels and, after this relabeling, all nodes $\omega_I$ have the same children in $\GG_{\vx_M}^\ci$ and $\widetilde{\GG}_{\vx_M}^{\widetilde{\ci}}$ for all $\vx_M$. 
It follows that, after this relabeling, $\GG_{\vx_M}^\ci$ and $\widetilde{\GG}_{\vx_M}^{\widetilde{\ci}}$ have the same skeleton for all $\vx_M\in\mathcal{C}_\mathcal{T}$.  
Hence, if we assume now that there is some $\vx_M\in\mathcal{C}_\mathcal{T}$ for which there is a node $\omega_{I^\ast}$ in $\GG_{\vx_M}^\ci$ and $k\in[p]\setminus M$ for which $\omega_{I^\ast}\rightarrow j$ is part of a v-structure in $\GG_{\vx_M}^\ci$ but $\omega_{I^\ast}\rightarrow j$ is not part of a v-structure in $\widetilde{\GG}_{\vx_M}^{\widetilde{\ci}}$, then it must be that there exists $k\in[p]\setminus M\cup\{j\}$ such that $\omega_{I^\ast}\rightarrow j\leftarrow k$ is a v-structure in $\GG_{\vx_M}^\ci$ but $\omega_{I^\ast}\rightarrow j\rightarrow k$ in $\widetilde{\GG}_{\vx_M}^{\widetilde{\ci}}$.  
Given such a scenario, let $S:=\pa_{\GG_{\vx_M}^\ci}(k)$.  
It follows that $w_{I^\ast}$ is d-separated from $k$ given $S$ in $\GG_{\vx_M}^\ci$ but d-connected given $S$ in $\widetilde{\GG}_{\vx_M}^{\widetilde{\ci}}$.
Similar to the argument given in case (2), the d-separation statement that holds in $\GG_{\vx_M}^{\ci}$ translates into a set of polynomials 
$\mathrm{Inv}_{\GG_{\vx_M}^{\ci}}$ that vanish when evaluated at the points in 
$\MM(\mathcal{T},\ci)$. 
Supposing for the sake of contradiction that $\MM(\mathcal{T},\ci) = \MM(\widetilde{\mathcal{T}},\widetilde{\ci})$, the same argument shows that the polynomials in $\mathrm{Inv}_{\GG_{\vx_M}^{\ci}}$ must also vanish on all points in the model $\MM(\widetilde{\mathcal{T}},\widetilde{\ci})$, which would contradict $\omega_{I^\ast}$ and $k$ being d-connected given $S$ in $\widetilde{\GG}_{\vx_M}^{\widetilde{\ci}}$.  
Hence, we conclude that $\MM(\mathcal{T},\ci) \neq \MM(\widetilde{\mathcal{T}},\widetilde{\ci})$, which completes the proof.
\end{proof}

\section{Maximum likelihood estimation}
\label{appsec: BIC}

We now present a closed-form formula for the MLE of a CStree.  We derive the formula
from results about the MLE of a staged tree model in \citep{DMS20}. In particular, the MLE of a 
staged tree model is an invariant of its statistical equivalence class.

We consider data $\mathbb{D}$ summarized as a $d_1\times d_2 \times \cdots \times d_p$ contingency
table $u$. The entry $u_{\xx}$ of $u$ is the number of occurrences
of the outcome $\xx \in \mathcal{X}$
in $\mathbb{D}$.
Given  $C\subset [p]$ we consider the marginal table $u_{C}$. The entry 
$u_{\xx,C}$ in the table $u_{C}$ is obtained by fixing the indices of the states in $C$ and summing
over all other indices not in $C$.
That is, 
\[
u_{\xx,C} = \sum_{\yy\in\mathcal{X}_{[p]\setminus C}}u_{\xx_{C},\yy}.
\]

\begin{proposition}\label{prop:mle}
Let $\mathcal{T}$ and $u$ a random sample drawn from $(X_1,\ldots, X_p)$. The MLE in $\MM(\mathcal{T},\theta)$ for
the table $u$ is
\begin{equation*} \label{eq:mle}
\hat{p}_{\xx} = \prod_{
\substack{k=1, \\ \mathcal{S}_{\pi,k}(\vx_S)\in \mathbf{s}_k}}^p \frac{u_{\xx, S\cup \{k\} }}{u_{\xx, S}}.
\end{equation*}
If $\mathcal{T}$ represents a DAG model, then $S=\pa_{\GG}(k)$.
\end{proposition}
\begin{proof}
Following the formula from \cite[Proposition 11]{DMS20}, we find the maximum likelihood
estimates for the parameters of the model. Let $y_{\theta(e)}$ be a parameter associated to the edge $e= \xx_{[k-1]}\to \xx_{[k]}$ in $\mathcal{T}$.
The quotient $u_{\xx,[k]}/u_{\xx,[k-1]}$ is the empirical estimate for the transition probability from
$\xx_{[k-1]}$ to $\xx_{[k]}$. By \cite[Remark 12]{DMS20} to
obtain the maximum likelihood estimate for $y_{\theta(e)}$
we consider all fractions $u_{\xx',[k]}/u_{\xx',[k-1]}$ such that $\theta(\xx'_{[k-1]}\to \xx'_{[k]})
= \theta(e)$ and aggregate them by adding their numerators and denominators separately. The complete set of those fractions is indexed by the elements in the stage 
$\mathcal{S}_{\pi,k}(\vx_S)\in\mathbf{s}_k$
that contains the node $\xx_{[k-1]}$. Since $\mathcal{T}$ is a CStree, any two  vertices $\xx',\xx''$
in the same stage $\mathcal{S}_{\pi,k}(\vx_S)$ are in the same level and satisfy $\xx'_{S}= \xx''_{S}$. 
Therefore,
\begin{equation*}
\hat{y}_{\theta(e)} = \frac{\sum_{\xx'_{[k]}\in \mathcal{S}_{\pi,k}(\vx_S)} u_{\xx',[k]}}{\sum_{\xx'_{[k]}\in \mathcal{S}_{\pi,k}(\vx_S)} u_{\xx',[k-1]}}
= \frac{u_{\xx,S\cup \{k\} }}{u_{\xx, S}}.
\end{equation*}
Finally, $\hat{p}_{\xx}=\prod_{k=1}^{p}\hat{y}_{\theta(\xx_{[k-1]}\to\xx_{[k]})}$ and using the formula
for $\hat{y}_{\theta(\xx_{[k-1]}\to\xx_{[k]})}$ yields the desired equation (\ref{eq:mle}).
The last assertion in the proposition follows from \cite[Example 3.2]{DS20}.
\end{proof}

Given data $\mathbb{D}$ drawn from a joint distribution over variables $(X_1,\ldots,X_p)$ and a DAG $\GG = ([p],E)$, the \emph{Bayesian Information Criterion} (BIC) is defined as
\[
\mathcal{S}(\GG,\mathbb{D}) = \log p(\mathbb{D} \mid \hat{\theta}, \GG) - \frac{d}{2}\log(n),
\]
where $\hat{\theta}$ denotes the maximum likelihood values for the DAG model parameters, $d$ denotes the number of free parameters in the model, and $n$ denotes the sample size.
In a similar fashion, the BIC of a CStree $\mathcal{T}$ is defined as 
\[
\mathcal{S}(\mathcal{T},\mathbb{D}) = \log p(\mathbb{D} \mid \hat{\theta}, \mathcal{T}) - \frac{d}{2}\log(n),
\]
where the number of free parameters is $\sum_{k\in[p]}|\mathbf{s}_k|(|\mathcal{X}_{\{k\}}|- 1)$.
For example, when all variables are binary, $d$ is the number of stages in $\mathcal{T}$. 
By Corollary~\ref{cor: VP generalization}, the number of free parameters $d$ is the same for any two statistically equivalent staged trees, as the stages in each tree are determined by the edges in their associated context graphs.   
Hence, the BIC is \emph{score equivalent} for CStrees, meaning that $\mathcal{S}(\mathcal{T},\mathbb{D}) = \mathcal{S}(\mathcal{T}^\prime,\mathbb{D})$ whenever $\mathcal{T}$ and $\mathcal{T}^\prime$ are statistically equivalent.  
By Theorem~\ref{thm: markov to context graphs}, CStrees are examples of discrete DAG models with explicit local constraints, and hence are curved exponential models \citep[Theorem 4]{GHKM01}.  
This observation also follows from a recent result of \citet{GLM20} who showed that all staged tree models are curved exponential models.
Thus, it follows from a result in \citep{H88}, that the BIC is \emph{consistent} which means it satisfies the conditions in the next definition:
\begin{definition}
\label{def: consistent}
Let $\mathbb{D}$ be a set of $n$ independent and identically distributed samples drawn from some distribution $\vX = (X_1,\ldots, X_p)$.  
A scoring criterion $\mathcal{S}$ is \emph{consistent} if, as $n\longrightarrow\infty$, the following holds for any two models $\MM,\MM^\prime$:
\begin{enumerate}
	\item $\mathcal{S}(\MM,\mathbb{D})>\mathcal{S}(\MM^\prime,\mathbb{D})$ whenever $\vX\in\MM$ but $\vX\notin\MM^\prime$, and 
	\item $\mathcal{S}(\MM,\mathbb{D})>\mathcal{S}(\MM^\prime,\mathbb{D})$ whenever $\vX\in\MM\cap\MM^\prime$ but $\MM$ has fewer parameters than $\MM^\prime$.  
\end{enumerate}
\end{definition}

\section{Enumeration of CStrees}
\label{appsec: enumeration}
Similar to DAGs, the number of CStrees grows super-exponentially in the number of variables $p$.  
The number of CStrees for $1,2,3,4$ and $5$ binary variables is depicted side-by-side with the corresponding number of DAGs and (compatibly labeled, stratified) staged trees in Figure~\ref{fig: counting CStrees}.  
\begin{figure}
	\centering
	{
\begin{tabular}{r | r | r |r|}
$p$ &   DAGs    &   CStrees  & Compatibly Labeled Staged Trees\\\hline
1   &   1                   &    1   & 1\\
2   &   3                   &    4   & 4\\
3   &   25                  &    96   & 180\\
4   &   543                 &    59136 &  2980800\\
5   &   29281               &    26466908160&  156196038558888000  \\
6   &   3781503             &    $1.1326\times 10^{22}$  & $1.20019\times 10^{44}$\\
7   &   1138779265          &     ?  & $1.44616\times 10^{110}$\\
8   &   783702329343        &     ? & $1.29814\times 10^{269}$\\
\end{tabular}
	}

	\caption{Number of DAGs, CStrees and compatibly labeled staged trees on $p$  binary variables.}
	\label{fig: counting CStrees}
\end{figure}
We see that the number of CStrees for representing $p$ binary variables is already on the order of $10$'s of billions for $p = 5$, whereas the number of DAGs for representing $p$ variables reaches this order of magnitude around $p = 7$.  
On the other hand, the number of general (compatibly labeled, stratified) staged trees on $p = 5$ binary variables is already on the order of $100$'s of millions of billions.
It is well-known \citep{CS14} and easy to verify that the number of (compatibly labeled, stratified) staged trees on $p$ binary variables is 
\[
p!\prod_{k = 1}^{p-1} B_{2^k}
\]
where $B_p$ is the \emph{$p^{th}$ Bell number} \citep[A000110]{OEIS}.
A similar formula holds for the number of CStrees: 

Let $[0,1]^p$ denote the $p$-dimensional cube that is given by the convex hull of all $(0,1)$-vectors in $\R^p$.  
We define the \emph{$(p+1)^{st}$ cubical Bell number} to be the number of ways to partition the vertices of $[0,1]^p$ into non-overlapping faces of the cube.  
We have the following proposition and relate the numbers $B^{(c)}_{p}$ to the classical Bell numbers $B_p$. 
\begin{proposition}
\label{prop: counting CStrees}
The number of CStrees on $p$ binary variables is
$
p!\prod_{k = 1}^p B_k^{(c)}.
$
\end{proposition}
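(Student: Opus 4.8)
The plan is to exhibit an explicit bijection between the set of CStrees on $p$ binary variables and the set of pairs consisting of a causal ordering together with an admissible staging at each level, and then to count each factor separately. Recall that a CStree $\TT\in\TT_{\CS}$ is determined by two pieces of data: its causal ordering $\pi=\pi_1\cdots\pi_p$, which fixes the underlying event tree (the labels $x_{\pi_1}\cdots x_{\pi_k}$ of its nodes record $\pi$), and, for each $k\in[p]$, the staging of the level $L_{k-1}$. A compatible labeling is then recovered from the stagings by assigning a fresh pair of edge labels to each stage, so the pair $\bigl(\pi,(\text{stagings of }L_0,\ldots,L_{p-1})\bigr)$ determines $\TT$ uniquely, and distinct such pairs yield distinct CStrees (in particular distinct orderings give distinct node-labeled trees). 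There are $p!$ choices of $\pi$, so it remains to count, for a fixed ordering, the admissible stagings level by level.

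The key observation is that, for \emph{binary} variables, the CStree condition on a stage is exactly the condition that the stage be the vertex set of a face of a cube. For a fixed ordering, the nodes of level $L_{k-1}$ are indexed by $\RR_{[k-1]}=\{1,2\}^{k-1}$, which we identify with the vertex set of the cube $[0,1]^{k-1}$. By Definition~\ref{def: CStree}, a stage $S_i\subset L_{k-1}$ consists of all outcomes agreeing with a fixed $\xx_{C_i}$ on a coordinate set $C_i\subseteq[k-1]$ and ranging freely on the complement; for binary outcomes this is precisely a face of $[0,1]^{k-1}$ (fix the coordinates in $C_i$, leave the rest free). Consequently an admissible staging of $L_{k-1}$ is exactly a partition of the vertices of $[0,1]^{k-1}$ into non-overlapping faces, and by definition the number of these is the cubical Bell number $B_k^{(c)}$ (the cube $[0,1]^{k-1}$ corresponds to the $k$th cubical Bell number). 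The root level $L_0$ is a single vertex, contributing the factor $B_1^{(c)}=1$, while $L_{p-1}$, the last level carrying outgoing edges, contributes $B_p^{(c)}$; the leaves $L_p$ carry no edges and hence no staging.

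Finally, the staging of $L_{k-1}$ is constrained only within that level — both the CStree condition and the compatible-labeling condition are imposed level by level and relate no two distinct levels — so the $p$ choices are independent. Multiplying the $p!$ orderings by the independent factors $B_1^{(c)},\ldots,B_p^{(c)}$, one for each level $L_0,\ldots,L_{p-1}$, yields
\[
p!\prod_{k=1}^{p}B_k^{(c)}.
\]
The step requiring the most care is verifying that this correspondence is a genuine bijection: one must check that every choice of face-partitions is realized by some compatibly labeled staged tree (existence of the labeling, which follows by assigning disjoint label pairs stage by stage so that $\theta(E(v))=\theta(E(w))$ precisely when $v,w$ lie in a common face), and that no two distinct data collapse to the same CStree. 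The binary hypothesis is essential here, since it is exactly what makes the CStree stages coincide with faces of a cube rather than with more general subgrids of $\prod_i[d_{\pi_i}]$; this is the feature that lets the cubical Bell numbers enter.
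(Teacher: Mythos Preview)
Your proof is correct and follows essentially the same approach as the paper: identify the admissible stagings of level $L_{k-1}$ (for binary variables) with face-partitions of the cube $[0,1]^{k-1}$, count these by the cubical Bell number $B_k^{(c)}$, observe that the level-wise choices are independent, and multiply by $p!$ for the causal orderings. You spell out the bijection and the role of the binary hypothesis more explicitly than the paper does, but the underlying argument is the same.
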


\begin{proof}
The \emph{$p^{th}$ Bell number} $B_p$ counts the number of partitions of the $p$-set $\{1,\ldots,p\}$.  
Equivalently, it is number of ways to divide the vertices of a $(p-1)$-dimensional simplex into non-overlapping faces.  
Since CStrees are stratified, in order to enumerate them, one need only determine the number of possible ways to partition the nodes in level $k$ into stages that satisfy the condition given in Definition~\ref{def:CStree}, take the product of these numbers for $k = 1,\ldots, p-1$, and then multiply by $p!$ to account for the different possible causal orderings of the variables. 
Hence, to get a formula for the number of CStrees on $p$ binary variables we need to determine the number of possible ways to partition the nodes in level $k$ into stages that satisfy the condition given in Definition~\ref{def:CStree}.  
Since each variable is binary, there are exactly $2^k$ vertices in level $k$ of the CStree and each vertex corresponds to a unique vertex of the $k$-dimensional cube $[0,1]^k$.  
More generally a $(k - t)$-dimensional face of the cube $[0,1]^k$ is specified by fixing $t$ coordinates to be equal to either $0$ or $1$. 
Hence, a $(k-t)$-dimensional face corresponds to a stage $\mathcal{S}_{\pi,k}(\vx_S)$ with $|S| = t$. 
Since the stages cannot overlap, we have that a staging $\mathbf{s}_k$ of level $k$ corresponds to a collection of non-overlapping faces of $[0,1]^k$; i.e., the $(k+1)^{st}$ cubical Bell number $B_{k+1}^{(c)}$.  
\end{proof}

The cubical Bell numbers are known only for small values \cite[A018926]{OEIS}.  
For $p = 1,\ldots,6$ they are $1, 2, 8, 154, 89512, 71319425714$.
The number of CStrees in Figure~\ref{fig: counting CStrees} are computed via these values and Proposition~\ref{prop: counting CStrees}.  
Given Proposition~\ref{prop: counting CStrees}, it would be of interest to have a closed-form formula for $B_p^{(c)}$.

\clearpage

\section{Additional Figures for the Real Data Examples}
\label{appsec: additional results}

\begin{figure}[h]
	\centering
\begin{tikzpicture}[thick,scale=0.2]
	
	 \node[circle, draw, fill=black!0, inner sep=2pt, minimum width=2pt] (w3) at (8,15)  {};
 	 \node[circle, draw, fill=black!0, inner sep=2pt, minimum width=2pt] (w4) at (8,13) {};
 	 \node[circle, draw, fill=black!0, inner sep=2pt, minimum width=2pt] (w5) at (8,11) {};
 	 \node[circle, draw, fill=black!0, inner sep=2pt, minimum width=2pt] (w6) at (8,9) {};
 	 \node[circle, draw, fill=black!0, inner sep=2pt, minimum width=2pt] (v3) at (8,7)  {};
 	 \node[circle, draw, fill=black!0, inner sep=2pt, minimum width=2pt] (v4) at (8,5) {};
 	 \node[circle, draw, fill=black!0, inner sep=2pt, minimum width=2pt] (v5) at (8,3) {};
 	 \node[circle, draw, fill=black!0, inner sep=2pt, minimum width=2pt] (v6) at (8,1) {};
 	 \node[circle, draw, fill=black!0, inner sep=2pt, minimum width=2pt] (w3i) at (8,-1)  {};
 	 \node[circle, draw, fill=black!0, inner sep=2pt, minimum width=2pt] (w4i) at (8,-3) {};
 	 \node[circle, draw, fill=black!0, inner sep=2pt, minimum width=2pt] (w5i) at (8,-5) {};
 	 \node[circle, draw, fill=black!0, inner sep=2pt, minimum width=2pt] (w6i) at (8,-7) {};
	 
 	 \node[circle, draw, fill=black!0, inner sep=2pt, minimum width=2pt] (v3i) at (8,-9)  {};
 	 \node[circle, draw, fill=black!0, inner sep=2pt, minimum width=2pt] (v4i) at (8,-11) {};
 	 \node[circle, draw, fill=black!0, inner sep=2pt, minimum width=2pt] (v5i) at (8,-13) {};
 	 \node[circle, draw, fill=black!0, inner sep=2pt, minimum width=2pt] (v6i) at (8,-15) {}; 	 
	 
	 \node[circle, draw, fill=yellow!0, inner sep=2pt, minimum width=2pt] (w1) at (0,14) {};
 	 \node[circle, draw, fill=orange!70, inner sep=2pt, minimum width=2pt] (w2) at (0,10) {}; 

 	 \node[circle, draw, fill=yellow!00, inner sep=2pt, minimum width=2pt] (v1) at (0,6) {};
 	 \node[circle, draw, fill=orange!70, inner sep=2pt, minimum width=2pt] (v2) at (0,2) {};	 
 	 \node[circle, draw, fill=violet!70, inner sep=2pt, minimum width=2pt] (w1i) at (0,-2) {};
 	 \node[circle, draw, fill=violet!70, inner sep=2pt, minimum width=2pt] (w2i) at (0,-6) {};

 	 \node[circle, draw, fill=violet!70, inner sep=2pt, minimum width=2pt] (v1i) at (0,-10) {};
 	 \node[circle, draw, fill=violet!70, inner sep=2pt, minimum width=2pt] (v2i) at (0,-14) {};

 	 \node[circle, draw, fill=cyan!90, inner sep=2pt, minimum width=2pt] (w) at (-8,12) {};
 	 \node[circle, draw, fill=cyan!90, inner sep=2pt, minimum width=2pt] (v) at (-8,4) {};
 	 \node[circle, draw, fill=cyan!0, inner sep=2pt, minimum width=2pt] (wi) at (-8,-4) {};
 	 \node[circle, draw, fill=red!0, inner sep=2pt, minimum width=2pt] (vi) at (-8,-12) {};

 	 \node[circle, draw, fill=lime!70, inner sep=2pt, minimum width=2pt] (r) at (-16,8) {};
         \node[circle, draw, fill=lime!70, inner sep=2pt, minimum width=2pt] (ri) at (-16,-8) {};

 	 \node[circle, draw, fill=black!0, inner sep=2pt, minimum width=2pt] (I) at (-22,0) {};

 	 \draw[->]   (I) --    (r) ;
 	 \draw[->]   (I) --   (ri) ;

 	 \draw[->]   (r) --   (w) ;
 	 \draw[->]   (r) --   (v) ;

 	 \draw[->]   (w) --  (w1) ;
 	 \draw[->]   (w) --  (w2) ;

 	 \draw[->]   (w1) --   (w3) ;
 	 \draw[->]   (w1) --   (w4) ;
 	 \draw[->]   (w2) --  (w5) ;
 	 \draw[->]   (w2) --  (w6) ;

 	 \draw[->]   (v) --  (v1) ;
 	 \draw[->]   (v) --  (v2) ;

 	 \draw[->]   (v1) --  (v3) ;
 	 \draw[->]   (v1) --  (v4) ;
 	 \draw[->]   (v2) --  (v5) ;
 	 \draw[->]   (v2) --  (v6) ;

 	 \draw[->]   (ri) --   (wi) ;
 	 \draw[->]   (ri) -- (vi) ;

 	 \draw[->]   (wi) --  (w1i) ;
 	 \draw[->]   (wi) --  (w2i) ;

 	 \draw[->]   (w1i) --  (w3i) ;
 	 \draw[->]   (w1i) -- (w4i) ;
 	 \draw[->]   (w2i) --  (w5i) ;
 	 \draw[->]   (w2i) --  (w6i) ;

 	 \draw[->]   (vi) --  (v1i) ;
 	 \draw[->]   (vi) --  (v2i) ;

 	 \draw[->]   (v1i) --  (v3i) ;
 	 \draw[->]   (v1i) -- (v4i) ;
 	 \draw[->]   (v2i) -- (v5i) ;
 	 \draw[->]   (v2i) --  (v6i) ;
	 
	 \node at (-20,-17) {\scriptsize pCAMKII} ;
	 \node at (-12,-17) {\scriptsize pS6} ;
	 \node at (-4,-17) {\scriptsize pPKCG} ; 
	 \node at (4,-17) {\scriptsize NR1} ; 

\end{tikzpicture}
	\vspace{-0.2cm}
	\caption{An element of the BIC-optimal equivalence class of CStrees for the observational data for the proteins pCAMKII, pPKCG, NR1 and pS6.}
	\label{fig: BIC optimal observational tree 1}
\end{figure}

	\begin{figure}[h!]
	\centering

\begin{tikzpicture}[thick,scale=0.3]
	
 	 \node[circle, draw, fill=black!00, inner sep=2pt, minimum width=2pt] (a1) at (4,0.5)  {};
 	 \node[circle, draw, fill=black!00, inner sep=2pt, minimum width=2pt] (a2) at (4,-0.5)  {};
 	 \node[circle, draw, fill=black!00, inner sep=2pt, minimum width=2pt] (a3) at (4,-1) {};
 	 \node[circle, draw, fill=black!00, inner sep=2pt, minimum width=2pt] (a4) at (4,-2) {};
 	 \node[circle, draw, fill=black!00, inner sep=2pt, minimum width=2pt] (a5) at (4,-2.5) {};
 	 \node[circle, draw, fill=black!00, inner sep=2pt, minimum width=2pt] (a6) at (4,-3.5) {};
 	 \node[circle, draw, fill=black!00, inner sep=2pt, minimum width=2pt] (a7) at (4,-4) {};
 	 \node[circle, draw, fill=black!00, inner sep=2pt, minimum width=2pt] (a8) at (4,-5) {};
 	 \node[circle, draw, fill=black!00, inner sep=2pt, minimum width=2pt] (a9) at (4,-5.5)  {};
 	 \node[circle, draw, fill=black!00, inner sep=2pt, minimum width=2pt] (a10) at (4,-6.5)  {};
 	 \node[circle, draw, fill=black!00, inner sep=2pt, minimum width=2pt] (a11) at (4,-7) {};
 	 \node[circle, draw, fill=black!00, inner sep=2pt, minimum width=2pt] (a12) at (4,-8) {};
 	 \node[circle, draw, fill=black!00, inner sep=2pt, minimum width=2pt] (a13) at (4,-8.5) {};
 	 \node[circle, draw, fill=black!00, inner sep=2pt, minimum width=2pt] (a14) at (4,-9.5) {};
 	 \node[circle, draw, fill=black!00, inner sep=2pt, minimum width=2pt] (a15) at (4,-10) {};
 	 \node[circle, draw, fill=black!00, inner sep=2pt, minimum width=2pt] (a16) at (4,-11) {};

 	 \node[circle, draw, fill=black!00, inner sep=2pt, minimum width=2pt] (b1) at (4,0.5-12)  {};
 	 \node[circle, draw, fill=black!00, inner sep=2pt, minimum width=2pt] (b2) at (4,-0.5-12)  {};
 	 \node[circle, draw, fill=black!00, inner sep=2pt, minimum width=2pt] (b3) at (4,-1-12) {};
 	 \node[circle, draw, fill=black!00, inner sep=2pt, minimum width=2pt] (b4) at (4,-2-12) {};
 	 \node[circle, draw, fill=black!00, inner sep=2pt, minimum width=2pt] (b5) at (4,-2.5-12) {};
 	 \node[circle, draw, fill=black!00, inner sep=2pt, minimum width=2pt] (b6) at (4,-3.5-12) {};
 	 \node[circle, draw, fill=black!00, inner sep=2pt, minimum width=2pt] (b7) at (4,-4-12) {};
 	 \node[circle, draw, fill=black!00, inner sep=2pt, minimum width=2pt] (b8) at (4,-5-12) {};
 	 \node[circle, draw, fill=black!00, inner sep=2pt, minimum width=2pt] (b9) at (4,-5.5-12)  {};
 	 \node[circle, draw, fill=black!00, inner sep=2pt, minimum width=2pt] (b10) at (4,-6.5-12)  {};
 	 \node[circle, draw, fill=black!00, inner sep=2pt, minimum width=2pt] (b11) at (4,-7-12) {};
 	 \node[circle, draw, fill=black!00, inner sep=2pt, minimum width=2pt] (b12) at (4,-8-12) {};
 	 \node[circle, draw, fill=black!00, inner sep=2pt, minimum width=2pt] (b13) at (4,-8.5-12) {};
 	 \node[circle, draw, fill=black!00, inner sep=2pt, minimum width=2pt] (b14) at (4,-9.5-12) {};
 	 \node[circle, draw, fill=black!00, inner sep=2pt, minimum width=2pt] (b15) at (4,-10-12) {};
 	 \node[circle, draw, fill=black!00, inner sep=2pt, minimum width=2pt] (b16) at (4,-11-12) {};

 	 \node[circle, draw, fill=pink!70, inner sep=2pt, minimum width=2pt] (w3) at (0,0)  {};
 	 \node[circle, draw, fill=orange!80, inner sep=2pt, minimum width=2pt] (w4) at (0,-1.5) {};
 	 \node[circle, draw, fill=yellow!60, inner sep=2pt, minimum width=2pt] (w5) at (0,-3) {};
 	 \node[circle, draw, fill=orange!80, inner sep=2pt, minimum width=2pt] (w6) at (0,-4.5) {};
	 
 	 \node[circle, draw, fill=violet!70, inner sep=2pt, minimum width=2pt] (v3) at (0,-6)  {};
 	 \node[circle, draw, fill=violet!70, inner sep=2pt, minimum width=2pt] (v4) at (0,-7.5) {};
 	 \node[circle, draw, fill=violet!70, inner sep=2pt, minimum width=2pt] (v5) at (0,-9) {};
 	 \node[circle, draw, fill=violet!70, inner sep=2pt, minimum width=2pt] (v6) at (0,-10.5) {};

	 \node[circle, draw, fill=cyan!90, inner sep=2pt, minimum width=2pt] (w1) at (-8,-.75) {};
 	 \node[circle, draw, fill=cyan!90, inner sep=2pt, minimum width=2pt] (w2) at (-8,-3.75) {}; 

 	 \node[circle, draw, fill=red!70, inner sep=2pt, minimum width=2pt] (v1) at (-8,-6.75) {};
 	 \node[circle, draw, fill=olive!70, inner sep=2pt, minimum width=2pt] (v2) at (-8,-9.75) {};

 	 \node[circle, draw, fill=lime!70, inner sep=2pt, minimum width=2pt] (w) at (-16,-2.25) {};

 	 \node[circle, draw, fill=lime!70, inner sep=2pt, minimum width=2pt] (v) at (-16,-8.25) {};

 	 \node[circle, draw, fill=teal!70, inner sep=2pt, minimum width=2pt] (r) at (-20,-5.25) {};

 	 \node[circle, draw, fill=pink!70, inner sep=2pt, minimum width=2pt] (w3i) at (0,-12)  {};
 	 \node[circle, draw, fill=orange!80, inner sep=2pt, minimum width=2pt] (w4i) at (0,-13.5) {};
 	 \node[circle, draw, fill=yellow!70, inner sep=2pt, minimum width=2pt] (w5i) at (0,-15) {};
 	 \node[circle, draw, fill=orange!80, inner sep=2pt, minimum width=2pt] (w6i) at (0,-16.5) {};
	 
 	 \node[circle, draw, fill=violet!70, inner sep=2pt, minimum width=2pt] (v3i) at (0,-18)  {};
 	 \node[circle, draw, fill=violet!70, inner sep=2pt, minimum width=2pt] (v4i) at (0,-19.5) {};
 	 \node[circle, draw, fill=violet!70, inner sep=2pt, minimum width=2pt] (v5i) at (0,-21) {};
 	 \node[circle, draw, fill=violet!70, inner sep=2pt, minimum width=2pt] (v6i) at (0,-22.5) {};

	 \node[circle, draw, fill=green!40, inner sep=2pt, minimum width=2pt] (w1i) at (-8,-12.75) {};
 	 \node[circle, draw, fill=green!40, inner sep=2pt, minimum width=2pt] (w2i) at (-8,-15.75) {};

 	 \node[circle, draw, fill=red!70, inner sep=2pt, minimum width=2pt] (v1i) at (-8,-18.75) {};
 	 \node[circle, draw, fill=olive!70, inner sep=2pt, minimum width=2pt] (v2i) at (-8,-21.75) {};

 	 \node[circle, draw, fill=lime!70, inner sep=2pt, minimum width=2pt] (wi) at (-16,-14.25) {};

 	 \node[circle, draw, fill=lime!70, inner sep=2pt, minimum width=2pt] (vi) at (-16,-20.25) {};

 	 \node[circle, draw, fill=teal!70, inner sep=2pt, minimum width=2pt] (ri) at (-20,-17.25) {};

 	 \node[circle, draw, fill=black!100, inner sep=2pt, minimum width=2pt] (I) at (-26,-11.25) {};

 	 \draw[->]   (w3) --   (a1) ;
 	 \draw[->]   (w3) --   (a2) ;
 	 \draw[->]   (w4) --   (a3) ;
 	 \draw[->]   (w4) --   (a4) ;
 	 \draw[->]   (w5) --   (a5) ;
 	 \draw[->]   (w5) --   (a6) ;
 	 \draw[->]   (w6) --   (a7) ;
 	 \draw[->]   (w6) --   (a8) ;
 	 \draw[->]   (v3) --   (a9) ;
 	 \draw[->]   (v3) --   (a10) ;
 	 \draw[->]   (v4) --   (a11) ;
 	 \draw[->]   (v4) --   (a12) ;
 	 \draw[->]   (v5) --   (a13) ;
 	 \draw[->]   (v5) --   (a14) ;
 	 \draw[->]   (v6) --   (a15) ;
 	 \draw[->]   (v6) --   (a16) ;
 	 
 	 \draw[->]   (w3i) --   (b1) ;
 	 \draw[->]   (w3i) --   (b2) ;
 	 \draw[->]   (w4i) --   (b3) ;
 	 \draw[->]   (w4i) --   (b4) ;
 	 \draw[->]   (w5i) --   (b5) ;
 	 \draw[->]   (w5i) --   (b6) ;
 	 \draw[->]   (w6i) --   (b7) ;
 	 \draw[->]   (w6i) --   (b8) ;
 	 \draw[->]   (v3i) --   (b9) ;
 	 \draw[->]   (v3i) --   (b10) ;
 	 \draw[->]   (v4i) --   (b11) ;
 	 \draw[->]   (v4i) --   (b12) ;
 	 \draw[->]   (v5i) --   (b13) ;
 	 \draw[->]   (v5i) --   (b14) ;
 	 \draw[->]   (v6i) --   (b15) ;
 	 \draw[->]   (v6i) --   (b16) ;

 	 \draw[->]   (I) -- node[midway,sloped,above]{\tiny saline}    (r) ;
 	 \draw[->]   (I) -- node[midway,sloped,below]{\tiny memantine}  (ri) ;

 	 \draw[->]   (r) --   (w) ;
 	 \draw[->]   (r) --   (v) ;

 	 \draw[->]   (w) --  (w1) ;
 	 \draw[->]   (w) --  (w2) ;

 	 \draw[->]   (w1) --   (w3) ;
 	 \draw[->]   (w1) --   (w4) ;
 	 \draw[->]   (w2) --  (w5) ;
 	 \draw[->]   (w2) --  (w6) ;

 	 \draw[->]   (v) --  (v1) ;
 	 \draw[->]   (v) --  (v2) ;

 	 \draw[->]   (v1) --  (v3) ;
 	 \draw[->]   (v1) --  (v4) ;
 	 \draw[->]   (v2) --  (v5) ;
 	 \draw[->]   (v2) --  (v6) ;

 	 \draw[->]   (ri) --   (wi) ;
 	 \draw[->]   (ri) -- (vi) ;

 	 \draw[->]   (wi) --  (w1i) ;
 	 \draw[->]   (wi) --  (w2i) ;

 	 \draw[->]   (w1i) --  (w3i) ;
 	 \draw[->]   (w1i) -- (w4i) ;
 	 \draw[->]   (w2i) --  (w5i) ;
 	 \draw[->]   (w2i) --  (w6i) ;

 	 \draw[->]   (vi) --  (v1i) ;
 	 \draw[->]   (vi) --  (v2i) ;

 	 \draw[->]   (v1i) --  (v3i) ;
 	 \draw[->]   (v1i) -- (v4i) ;
 	 \draw[->]   (v2i) -- (v5i) ;
 	 \draw[->]   (v2i) --  (v6i) ;
 	 
    \node at (-24, -25) {$\ci$} ; 
    \node at (-18, -25) {pCAMKII} ; 
    \node at (-12, -25) {pS6} ; 
    \node at (-4, -25) {pPKCG} ; 
    \node at (2, -25) {NR1} ; 

\end{tikzpicture}
	\vspace{-0.2cm}
	\caption{The BIC-optimal interventional CStree on the variables pCAMKII, pPKCG, NR1 and pS6.}
	\label{fig: BIC optimal interventional tree 1}
\end{figure}

\begin{figure}[h!]
	\centering
\begin{tikzpicture}[thick,scale=0.2]
	
	 \node[circle, draw, fill=black!0, inner sep=2pt, minimum width=2pt] (w3) at (8,15)  {};
 	 \node[circle, draw, fill=black!0, inner sep=2pt, minimum width=2pt] (w4) at (8,13) {};
 	 \node[circle, draw, fill=black!0, inner sep=2pt, minimum width=2pt] (w5) at (8,11) {};
 	 \node[circle, draw, fill=black!0, inner sep=2pt, minimum width=2pt] (w6) at (8,9) {};
 	 \node[circle, draw, fill=black!0, inner sep=2pt, minimum width=2pt] (v3) at (8,7)  {};
 	 \node[circle, draw, fill=black!0, inner sep=2pt, minimum width=2pt] (v4) at (8,5) {};
 	 \node[circle, draw, fill=black!0, inner sep=2pt, minimum width=2pt] (v5) at (8,3) {};
 	 \node[circle, draw, fill=black!0, inner sep=2pt, minimum width=2pt] (v6) at (8,1) {};
 	 \node[circle, draw, fill=black!0, inner sep=2pt, minimum width=2pt] (w3i) at (8,-1)  {};
 	 \node[circle, draw, fill=black!0, inner sep=2pt, minimum width=2pt] (w4i) at (8,-3) {};
 	 \node[circle, draw, fill=black!0, inner sep=2pt, minimum width=2pt] (w5i) at (8,-5) {};
 	 \node[circle, draw, fill=black!0, inner sep=2pt, minimum width=2pt] (w6i) at (8,-7) {};
	 
 	 \node[circle, draw, fill=black!0, inner sep=2pt, minimum width=2pt] (v3i) at (8,-9)  {};
 	 \node[circle, draw, fill=black!0, inner sep=2pt, minimum width=2pt] (v4i) at (8,-11) {};
 	 \node[circle, draw, fill=black!0, inner sep=2pt, minimum width=2pt] (v5i) at (8,-13) {};
 	 \node[circle, draw, fill=black!0, inner sep=2pt, minimum width=2pt] (v6i) at (8,-15) {}; 	 
	 
	 \node[circle, draw, fill=yellow!70, inner sep=2pt, minimum width=2pt] (w1) at (0,14) {};
 	 \node[circle, draw, fill=yellow!70, inner sep=2pt, minimum width=2pt] (w2) at (0,10) {}; 

 	 \node[circle, draw, fill=yellow!70, inner sep=2pt, minimum width=2pt] (v1) at (0,6) {};
 	 \node[circle, draw, fill=yellow!70, inner sep=2pt, minimum width=2pt] (v2) at (0,2) {};	 
 	 \node[circle, draw, fill=orange!80, inner sep=2pt, minimum width=2pt] (w1i) at (0,-2) {};
 	 \node[circle, draw, fill=violet!70, inner sep=2pt, minimum width=2pt] (w2i) at (0,-6) {};

 	 \node[circle, draw, fill=orange!80, inner sep=2pt, minimum width=2pt] (v1i) at (0,-10) {};
 	 \node[circle, draw, fill=violet!70, inner sep=2pt, minimum width=2pt] (v2i) at (0,-14) {};

 	 \node[circle, draw, fill=cyan!90, inner sep=2pt, minimum width=2pt] (w) at (-8,12) {};
 	 \node[circle, draw, fill=red!80, inner sep=2pt, minimum width=2pt] (v) at (-8,4) {};
 	 \node[circle, draw, fill=cyan!90, inner sep=2pt, minimum width=2pt] (wi) at (-8,-4) {};
 	 \node[circle, draw, fill=red!80, inner sep=2pt, minimum width=2pt] (vi) at (-8,-12) {};

 	 \node[circle, draw, fill=lime!0, inner sep=2pt, minimum width=2pt] (r) at (-16,8) {};
         \node[circle, draw, fill=lime!0, inner sep=2pt, minimum width=2pt] (ri) at (-16,-8) {};

 	 \node[circle, draw, fill=black!0, inner sep=2pt, minimum width=2pt] (I) at (-22,0) {};

 	 \draw[->]   (I) --    (r) ;
 	 \draw[->]   (I) --   (ri) ;

 	 \draw[->]   (r) --   (w) ;
 	 \draw[->]   (r) --   (v) ;

 	 \draw[->]   (w) --  (w1) ;
 	 \draw[->]   (w) --  (w2) ;

 	 \draw[->]   (w1) --   (w3) ;
 	 \draw[->]   (w1) --   (w4) ;
 	 \draw[->]   (w2) --  (w5) ;
 	 \draw[->]   (w2) --  (w6) ;

 	 \draw[->]   (v) --  (v1) ;
 	 \draw[->]   (v) --  (v2) ;

 	 \draw[->]   (v1) --  (v3) ;
 	 \draw[->]   (v1) --  (v4) ;
 	 \draw[->]   (v2) --  (v5) ;
 	 \draw[->]   (v2) --  (v6) ;

 	 \draw[->]   (ri) --   (wi) ;
 	 \draw[->]   (ri) -- (vi) ;

 	 \draw[->]   (wi) --  (w1i) ;
 	 \draw[->]   (wi) --  (w2i) ;

 	 \draw[->]   (w1i) --  (w3i) ;
 	 \draw[->]   (w1i) -- (w4i) ;
 	 \draw[->]   (w2i) --  (w5i) ;
 	 \draw[->]   (w2i) --  (w6i) ;

 	 \draw[->]   (vi) --  (v1i) ;
 	 \draw[->]   (vi) --  (v2i) ;

 	 \draw[->]   (v1i) --  (v3i) ;
 	 \draw[->]   (v1i) -- (v4i) ;
 	 \draw[->]   (v2i) -- (v5i) ;
 	 \draw[->]   (v2i) --  (v6i) ;
	 
	 \node at (-20,-17) {\scriptsize pPKCG} ;
	 \node at (-12,-17) {\scriptsize pNUMB} ;
	 \node at (-4,-17) {\scriptsize pNR1} ; 
	 \node at (4,-17) {\scriptsize pCAMKII} ; 

\end{tikzpicture}
	\vspace{-0.2cm}
	\caption{An element of the BIC optimal equivalence class of CStrees for the observational mice data.}
	\label{fig: BIC optimal observational tree}
\end{figure}

\begin{figure}[h]
    \centering
    	\begin{subfigure}[t]{0.3\textwidth}
	     \centering
	     \begin{tikzpicture}[thick,scale=0.2]
	
	 \node[circle, draw, fill=black!0, inner sep=2pt, minimum width=2pt] (w3) at (5,15)  {};
 	 \node[circle, draw, fill=black!0, inner sep=2pt, minimum width=2pt] (w4) at (5,13) {};
 	 \node[circle, draw, fill=black!0, inner sep=2pt, minimum width=2pt] (w5) at (5,11) {};
 	 \node[circle, draw, fill=black!0, inner sep=2pt, minimum width=2pt] (w6) at (5,9) {};
 	 \node[circle, draw, fill=black!0, inner sep=2pt, minimum width=2pt] (v3) at (5,7)  {};
 	 \node[circle, draw, fill=black!0, inner sep=2pt, minimum width=2pt] (v4) at (5,5) {};
 	 \node[circle, draw, fill=black!0, inner sep=2pt, minimum width=2pt] (v5) at (5,3) {};
 	 \node[circle, draw, fill=black!0, inner sep=2pt, minimum width=2pt] (v6) at (5,1) {};
 	 \node[circle, draw, fill=black!0, inner sep=2pt, minimum width=2pt] (w3i) at (5,-1)  {};
 	 \node[circle, draw, fill=black!0, inner sep=2pt, minimum width=2pt] (w4i) at (5,-3) {};
 	 \node[circle, draw, fill=black!0, inner sep=2pt, minimum width=2pt] (w5i) at (5,-5) {};
 	 \node[circle, draw, fill=black!0, inner sep=2pt, minimum width=2pt] (w6i) at (5,-7) {};
	 
 	 \node[circle, draw, fill=black!0, inner sep=2pt, minimum width=2pt] (v3i) at (5,-9)  {};
 	 \node[circle, draw, fill=black!0, inner sep=2pt, minimum width=2pt] (v4i) at (5,-11) {};
 	 \node[circle, draw, fill=black!0, inner sep=2pt, minimum width=2pt] (v5i) at (5,-13) {};
 	 \node[circle, draw, fill=black!0, inner sep=2pt, minimum width=2pt] (v6i) at (5,-15) {}; 	 
	 
	 \node[circle, draw, fill=yellow!70, inner sep=2pt, minimum width=2pt] (w1) at (0,14) {};
 	 \node[circle, draw, fill=orange!80, inner sep=2pt, minimum width=2pt] (w2) at (0,10) {}; 

 	 \node[circle, draw, fill=yellow!70, inner sep=2pt, minimum width=2pt] (v1) at (0,6) {};
 	 \node[circle, draw, fill=orange!80, inner sep=2pt, minimum width=2pt] (v2) at (0,2) {};	 
 	 \node[circle, draw, fill=yellow!70, inner sep=2pt, minimum width=2pt] (w1i) at (0,-2) {};
 	 \node[circle, draw, fill=violet!70, inner sep=2pt, minimum width=2pt] (w2i) at (0,-6) {};

 	 \node[circle, draw, fill=yellow!70, inner sep=2pt, minimum width=2pt] (v1i) at (0,-10) {};
 	 \node[circle, draw, fill=violet!70, inner sep=2pt, minimum width=2pt] (v2i) at (0,-14) {};

 	 \node[circle, draw, fill=cyan!90, inner sep=2pt, minimum width=2pt] (w) at (-5,12) {};
 	 \node[circle, draw, fill=red!80, inner sep=2pt, minimum width=2pt] (v) at (-5,4) {};
 	 \node[circle, draw, fill=cyan!90, inner sep=2pt, minimum width=2pt] (wi) at (-5,-4) {};
 	 \node[circle, draw, fill=red!80, inner sep=2pt, minimum width=2pt] (vi) at (-5,-12) {};

 	 \node[circle, draw, fill=lime!0, inner sep=2pt, minimum width=2pt] (r) at (-10,8) {};
         \node[circle, draw, fill=lime!0, inner sep=2pt, minimum width=2pt] (ri) at (-10,-8) {};

 	 \node[circle, draw, fill=black!0, inner sep=2pt, minimum width=2pt] (I) at (-13,0) {};

 	 \draw[->]   (I) --    (r) ;
 	 \draw[->]   (I) --   (ri) ;

 	 \draw[->]   (r) --   (w) ;
 	 \draw[->]   (r) --   (v) ;

 	 \draw[->]   (w) --  (w1) ;
 	 \draw[->]   (w) --  (w2) ;

 	 \draw[->]   (w1) --   (w3) ;
 	 \draw[->]   (w1) --   (w4) ;
 	 \draw[->]   (w2) --  (w5) ;
 	 \draw[->]   (w2) --  (w6) ;

 	 \draw[->]   (v) --  (v1) ;
 	 \draw[->]   (v) --  (v2) ;

 	 \draw[->]   (v1) --  (v3) ;
 	 \draw[->]   (v1) --  (v4) ;
 	 \draw[->]   (v2) --  (v5) ;
 	 \draw[->]   (v2) --  (v6) ;

 	 \draw[->]   (ri) --   (wi) ;
 	 \draw[->]   (ri) -- (vi) ;

 	 \draw[->]   (wi) --  (w1i) ;
 	 \draw[->]   (wi) --  (w2i) ;

 	 \draw[->]   (w1i) --  (w3i) ;
 	 \draw[->]   (w1i) -- (w4i) ;
 	 \draw[->]   (w2i) --  (w5i) ;
 	 \draw[->]   (w2i) --  (w6i) ;

 	 \draw[->]   (vi) --  (v1i) ;
 	 \draw[->]   (vi) --  (v2i) ;

 	 \draw[->]   (v1i) --  (v3i) ;
 	 \draw[->]   (v1i) -- (v4i) ;
 	 \draw[->]   (v2i) -- (v5i) ;
 	 \draw[->]   (v2i) --  (v6i) ;
	 
	 \node at (-12,-17) {\tiny pNR1} ;
	 \node at (-7.75,-17) {\tiny pNUMB} ;
	 \node at (-3,-17) {\tiny pPKCG} ; 
	 \node at (2.5,-17) {\tiny pCAMKII} ; 

\end{tikzpicture}
\end{subfigure}
\begin{subfigure}[t]{0.3\textwidth}
	     \centering
\begin{tikzpicture}[thick,scale=0.2]
	
	 \node[circle, draw, fill=black!0, inner sep=2pt, minimum width=2pt] (w3) at (5,15)  {};
 	 \node[circle, draw, fill=black!0, inner sep=2pt, minimum width=2pt] (w4) at (5,13) {};
 	 \node[circle, draw, fill=black!0, inner sep=2pt, minimum width=2pt] (w5) at (5,11) {};
 	 \node[circle, draw, fill=black!0, inner sep=2pt, minimum width=2pt] (w6) at (5,9) {};
 	 \node[circle, draw, fill=black!0, inner sep=2pt, minimum width=2pt] (v3) at (5,7)  {};
 	 \node[circle, draw, fill=black!0, inner sep=2pt, minimum width=2pt] (v4) at (5,5) {};
 	 \node[circle, draw, fill=black!0, inner sep=2pt, minimum width=2pt] (v5) at (5,3) {};
 	 \node[circle, draw, fill=black!0, inner sep=2pt, minimum width=2pt] (v6) at (5,1) {};
 	 \node[circle, draw, fill=black!0, inner sep=2pt, minimum width=2pt] (w3i) at (5,-1)  {};
 	 \node[circle, draw, fill=black!0, inner sep=2pt, minimum width=2pt] (w4i) at (5,-3) {};
 	 \node[circle, draw, fill=black!0, inner sep=2pt, minimum width=2pt] (w5i) at (5,-5) {};
 	 \node[circle, draw, fill=black!0, inner sep=2pt, minimum width=2pt] (w6i) at (5,-7) {};
	 
 	 \node[circle, draw, fill=black!0, inner sep=2pt, minimum width=2pt] (v3i) at (5,-9)  {};
 	 \node[circle, draw, fill=black!0, inner sep=2pt, minimum width=2pt] (v4i) at (5,-11) {};
 	 \node[circle, draw, fill=black!0, inner sep=2pt, minimum width=2pt] (v5i) at (5,-13) {};
 	 \node[circle, draw, fill=black!0, inner sep=2pt, minimum width=2pt] (v6i) at (5,-15) {}; 	 
	 
	 \node[circle, draw, fill=yellow!70, inner sep=2pt, minimum width=2pt] (w1) at (0,14) {};
 	 \node[circle, draw, fill=yellow!70, inner sep=2pt, minimum width=2pt] (w2) at (0,10) {}; 

 	 \node[circle, draw, fill=orange!80, inner sep=2pt, minimum width=2pt] (v1) at (0,6) {};
 	 \node[circle, draw, fill=violet!70, inner sep=2pt, minimum width=2pt] (v2) at (0,2) {};	 
 	 \node[circle, draw, fill=yellow!70, inner sep=2pt, minimum width=2pt] (w1i) at (0,-2) {};
 	 \node[circle, draw, fill=yellow!70, inner sep=2pt, minimum width=2pt] (w2i) at (0,-6) {};

 	 \node[circle, draw, fill=orange!80, inner sep=2pt, minimum width=2pt] (v1i) at (0,-10) {};
 	 \node[circle, draw, fill=violet!70, inner sep=2pt, minimum width=2pt] (v2i) at (0,-14) {};

 	 \node[circle, draw, fill=cyan!90, inner sep=2pt, minimum width=2pt] (w) at (-5,12) {};
 	 \node[circle, draw, fill=cyan!90, inner sep=2pt, minimum width=2pt] (v) at (-5,4) {};
 	 \node[circle, draw, fill=red!80, inner sep=2pt, minimum width=2pt] (wi) at (-5,-4) {};
 	 \node[circle, draw, fill=red!80, inner sep=2pt, minimum width=2pt] (vi) at (-5,-12) {};

 	 \node[circle, draw, fill=lime!0, inner sep=2pt, minimum width=2pt] (r) at (-10,8) {};
         \node[circle, draw, fill=lime!0, inner sep=2pt, minimum width=2pt] (ri) at (-10,-8) {};

 	 \node[circle, draw, fill=black!0, inner sep=2pt, minimum width=2pt] (I) at (-13,0) {};

 	 \draw[->]   (I) --    (r) ;
 	 \draw[->]   (I) --   (ri) ;

 	 \draw[->]   (r) --   (w) ;
 	 \draw[->]   (r) --   (v) ;

 	 \draw[->]   (w) --  (w1) ;
 	 \draw[->]   (w) --  (w2) ;

 	 \draw[->]   (w1) --   (w3) ;
 	 \draw[->]   (w1) --   (w4) ;
 	 \draw[->]   (w2) --  (w5) ;
 	 \draw[->]   (w2) --  (w6) ;

 	 \draw[->]   (v) --  (v1) ;
 	 \draw[->]   (v) --  (v2) ;

 	 \draw[->]   (v1) --  (v3) ;
 	 \draw[->]   (v1) --  (v4) ;
 	 \draw[->]   (v2) --  (v5) ;
 	 \draw[->]   (v2) --  (v6) ;

 	 \draw[->]   (ri) --   (wi) ;
 	 \draw[->]   (ri) -- (vi) ;

 	 \draw[->]   (wi) --  (w1i) ;
 	 \draw[->]   (wi) --  (w2i) ;

 	 \draw[->]   (w1i) --  (w3i) ;
 	 \draw[->]   (w1i) -- (w4i) ;
 	 \draw[->]   (w2i) --  (w5i) ;
 	 \draw[->]   (w2i) --  (w6i) ;

 	 \draw[->]   (vi) --  (v1i) ;
 	 \draw[->]   (vi) --  (v2i) ;

 	 \draw[->]   (v1i) --  (v3i) ;
 	 \draw[->]   (v1i) -- (v4i) ;
 	 \draw[->]   (v2i) -- (v5i) ;
 	 \draw[->]   (v2i) --  (v6i) ;
	 
	 \node at (-11,-17) {\tiny pNUMB} ;
	 \node at (-6.25,-17) {\tiny pPKCG} ;
	 \node at (-2,-17) {\tiny pNR1} ; 
	 \node at (2.75,-17) {\tiny pCAMKII} ; 

\end{tikzpicture}
\end{subfigure}
\begin{subfigure}[t]{0.3\textwidth}
	     \centering
	     \begin{tikzpicture}[thick,scale=0.2]
	
	 \node[circle, draw, fill=black!0, inner sep=2pt, minimum width=2pt] (w3) at (5,15)  {};
 	 \node[circle, draw, fill=black!0, inner sep=2pt, minimum width=2pt] (w4) at (5,13) {};
 	 \node[circle, draw, fill=black!0, inner sep=2pt, minimum width=2pt] (w5) at (5,11) {};
 	 \node[circle, draw, fill=black!0, inner sep=2pt, minimum width=2pt] (w6) at (5,9) {};
 	 \node[circle, draw, fill=black!0, inner sep=2pt, minimum width=2pt] (v3) at (5,7)  {};
 	 \node[circle, draw, fill=black!0, inner sep=2pt, minimum width=2pt] (v4) at (5,5) {};
 	 \node[circle, draw, fill=black!0, inner sep=2pt, minimum width=2pt] (v5) at (5,3) {};
 	 \node[circle, draw, fill=black!0, inner sep=2pt, minimum width=2pt] (v6) at (5,1) {};
 	 \node[circle, draw, fill=black!0, inner sep=2pt, minimum width=2pt] (w3i) at (5,-1)  {};
 	 \node[circle, draw, fill=black!0, inner sep=2pt, minimum width=2pt] (w4i) at (5,-3) {};
 	 \node[circle, draw, fill=black!0, inner sep=2pt, minimum width=2pt] (w5i) at (5,-5) {};
 	 \node[circle, draw, fill=black!0, inner sep=2pt, minimum width=2pt] (w6i) at (5,-7) {};
	 
 	 \node[circle, draw, fill=black!0, inner sep=2pt, minimum width=2pt] (v3i) at (5,-9)  {};
 	 \node[circle, draw, fill=black!0, inner sep=2pt, minimum width=2pt] (v4i) at (5,-11) {};
 	 \node[circle, draw, fill=black!0, inner sep=2pt, minimum width=2pt] (v5i) at (5,-13) {};
 	 \node[circle, draw, fill=black!0, inner sep=2pt, minimum width=2pt] (v6i) at (5,-15) {}; 	 
	 
	 \node[circle, draw, fill=yellow!70, inner sep=2pt, minimum width=2pt] (w1) at (0,14) {};
 	 \node[circle, draw, fill=orange!80, inner sep=2pt, minimum width=2pt] (w2) at (0,10) {}; 

 	 \node[circle, draw, fill=yellow!70, inner sep=2pt, minimum width=2pt] (v1) at (0,6) {};
 	 \node[circle, draw, fill=violet!70, inner sep=2pt, minimum width=2pt] (v2) at (0,2) {};	 
 	 \node[circle, draw, fill=yellow!70, inner sep=2pt, minimum width=2pt] (w1i) at (0,-2) {};
 	 \node[circle, draw, fill=orange!80, inner sep=2pt, minimum width=2pt] (w2i) at (0,-6) {};

 	 \node[circle, draw, fill=yellow!70, inner sep=2pt, minimum width=2pt] (v1i) at (0,-10) {};
 	 \node[circle, draw, fill=violet!70, inner sep=2pt, minimum width=2pt] (v2i) at (0,-14) {};

 	 \node[circle, draw, fill=cyan!90, inner sep=2pt, minimum width=2pt] (w) at (-5,12) {};
 	 \node[circle, draw, fill=cyan!90, inner sep=2pt, minimum width=2pt] (v) at (-5,4) {};
 	 \node[circle, draw, fill=red!80, inner sep=2pt, minimum width=2pt] (wi) at (-5,-4) {};
 	 \node[circle, draw, fill=red!80, inner sep=2pt, minimum width=2pt] (vi) at (-5,-12) {};

 	 \node[circle, draw, fill=lime!0, inner sep=2pt, minimum width=2pt] (r) at (-10,8) {};
     \node[circle, draw, fill=lime!0, inner sep=2pt, minimum width=2pt] (ri) at (-10,-8) {};

 	 \node[circle, draw, fill=black!0, inner sep=2pt, minimum width=2pt] (I) at (-13,0) {};

 	 \draw[->]   (I) --    (r) ;
 	 \draw[->]   (I) --   (ri) ;

 	 \draw[->]   (r) --   (w) ;
 	 \draw[->]   (r) --   (v) ;

 	 \draw[->]   (w) --  (w1) ;
 	 \draw[->]   (w) --  (w2) ;

 	 \draw[->]   (w1) --   (w3) ;
 	 \draw[->]   (w1) --   (w4) ;
 	 \draw[->]   (w2) --  (w5) ;
 	 \draw[->]   (w2) --  (w6) ;

 	 \draw[->]   (v) --  (v1) ;
 	 \draw[->]   (v) --  (v2) ;

 	 \draw[->]   (v1) --  (v3) ;
 	 \draw[->]   (v1) --  (v4) ;
 	 \draw[->]   (v2) --  (v5) ;
 	 \draw[->]   (v2) --  (v6) ;

 	 \draw[->]   (ri) --   (wi) ;
 	 \draw[->]   (ri) -- (vi) ;

 	 \draw[->]   (wi) --  (w1i) ;
 	 \draw[->]   (wi) --  (w2i) ;

 	 \draw[->]   (w1i) --  (w3i) ;
 	 \draw[->]   (w1i) -- (w4i) ;
 	 \draw[->]   (w2i) --  (w5i) ;
 	 \draw[->]   (w2i) --  (w6i) ;

 	 \draw[->]   (vi) --  (v1i) ;
 	 \draw[->]   (vi) --  (v2i) ;

 	 \draw[->]   (v1i) --  (v3i) ;
 	 \draw[->]   (v1i) -- (v4i) ;
 	 \draw[->]   (v2i) -- (v5i) ;
 	 \draw[->]   (v2i) --  (v6i) ;
	 
	 \node at (-11.5,-17) {\tiny pNUMB} ;
	 \node at (-7.25,-17) {\tiny pNR1} ;
	 \node at (-3,-17) {\tiny pPKCG} ; 
	 \node at (2.5,-17) {\tiny pCAMKII} ; 

\end{tikzpicture}
\end{subfigure}
	     	\caption{	The three CStrees that are statistically equivalent to the tree depicted in Figure~\ref{fig: BIC optimal observational tree}.}
	\label{fig: other three trees}
\end{figure}


\end{document}